\documentclass[12pt,reqno]{amsart}

\PassOptionsToPackage{colorlinks  = false,
  linkcolor   = black,
  citecolor   = black,
  urlcolor    = black}{hyperref}

\usepackage{hyperref}

\usepackage{amsthm,amssymb,bm,bbm,mathrsfs,mathtools,enumerate,graphicx,tikz,pgfplots,verbatim,textcomp,stmaryrd}
\usetikzlibrary{arrows.meta}
\usepackage[labelformat=simple,labelfont={}]{subcaption}
\usepackage[numbers,sort&compress]{natbib}

\usepackage{pifont}

\usepackage[T1]{fontenc}

\makeatletter
\def\mathcenterto#1#2{\mathclap{\protect\phantom{#1}\mathclap{#2}}\protect\phantom{#1}}
\let\old@widetilde\widetilde
\def\widetildeto#1#2{\mathcenterto{#2}{\old@widetilde{\mathcenterto{#1}{#2\,}}}}
\makeatother

\usepackage[margin=1.15in]{geometry}

\renewcommand\labelenumi{(\roman{enumi})}
\renewcommand\theenumi\labelenumi

\newcounter{mycount}

%% Special
\renewcommand{\P}{\mathbb P}
\newcommand{\E}{\mathbb E}

\newcommand{\R}{\mathbb R}
\newcommand{\Z}{\mathbb Z}
\newcommand{\N}{\mathbb N}

\newcommand{\ve}{\varepsilon}
\newcommand{\eqd} {\overset{d}{=}}

%%Mathcal
% \newcommand{\c}{\mathcal}

%% Mathscr

\newcommand{\sA}{\mathscr A}

\newcommand{\sD}{\mathscr D}

\newcommand{\sG}{\mathscr G}

\newcommand{\sL}{\mathscr L}

%% Mathbb

%% Mathfrak
\newcommand{\f}{\mathfrak}

%% USF Macros

%% Unimodular Macros

\pgfplotsset{compat=newest}

\newcommand\dc\DeclareMathOperator
\newcommand\df\textbf

\newcommand\Bn{B_{t}}
\newcommand\m\mathbb
\renewcommand\c\mathcal
\renewcommand\f\frac

\newcommand\bbb[1]{\textnormal{\textlbrackdbl}{#1}\textnormal{\textrbrackdbl}}
\newcommand\bbr[1]{\textnormal{\textlbrackdbl}{#1}\rrparenthesis}
\newcommand\bbl[1]{\llparenthesis{#1}\textnormal{\textrbrackdbl}}
\newcommand\bbo[1]{\llparenthesis{#1}\rrparenthesis}

\dc\conv{conv}
\dc\col{col}
\dc\Ord{Ord}
\dc\Sym{Sym}
\dc\Col{Col}
\dc\inv{inv}
\dc\rec{rec}
\dc\joint{joint}
\dc\bub{bub}
\dc\numbub{\#\!\bub}
\dc\jt{jt}
\dc\MalCol{MalCol}
\dc\mallows{Mal}
\dc\Joint{Joint}
\dc\BMal{BMal}
\dc\Mal{Mal}
\dc\perm{perm}
\dc\Block{Block}
\dc\bubEnd{bubEnd}
\newcommand\sLtilde{\widetildeto{I}{\sL}}
\newcommand\sDtilde{\widetildeto{I}{\sD}}
\newcommand\graph\Gamma

\newcommand\ptcoln[1]{P_{t,q,#1}^{\col}}

\newcommand\ztqn[1]{Z(t,q,#1)}%{Z_{t,q,#1}}

\newcommand\dto[1]{\xrightarrow[#1]{d}}
\DeclareMathOperator*{\bigCircle}{\bigcirc}

\renewcommand\tilde\widetilde
\renewcommand\hat\widehat

\newtheorem{thm}{Theorem}
\newtheorem{prop}[thm]{Proposition}
\newtheorem{lemma}[thm]{Lemma}
\newtheorem{cor}[thm]{Corollary}

\newtheorem*{dfn*}{Definition}

\title{Mallows Permutations and Finite Dependence}

\date{28 June 2017}

\author[Alexander E. Holroyd]{Alexander E.\ Holroyd}
\address{Alexander E.\ Holroyd, Microsoft Research, Redmond, WA 98052, USA} \email{holroyd@microsoft.com}
\urladdr{\url{http://aeholroyd.org}}

\author{Tom Hutchcroft}
\address{Tom Hutchcroft, Department of Mathematics, University of British Columbia}
\email{tomhutchcroft@gmail.com}

\author{Avi Levy}
\address{Avi Levy, Department of Mathematics, University of Washington}
\email{avius@math.washington.edu}
\urladdr{\url{http://math.washington.edu/~avius}}

\keywords{Proper coloring, finite dependence, Mallows permutation, finitary
factor}
\subjclass[2010]{60G10; 05C15; 05A05}

\begin{document}

\begin{abstract}
We use the Mallows permutation model to construct a new family of stationary finitely dependent proper colorings of the integers.  We prove that these colorings can be expressed as finitary factors of i.i.d.\ processes with finite mean coding radii.  They are the first colorings known to have these properties.  Moreover, we prove that the coding radii have exponential tails, and that the colorings can also be expressed as functions of countable-state Markov chains.  We deduce analogous existence statements concerning shifts of finite type and higher-dimensional colorings.
\end{abstract}

\maketitle

\section{Introduction}
  A stochastic process indexed by a metric space is said to be finitely dependent if subsets of variables separated by some fixed distance are independent. Finitely dependent processes appear in classical limit theorems, statistical physics, and probabilistic combinatorics \cite{alon2004probabilistic,erdos1975problems,heinrich1990asymptotic,hoeffding1948central,ibragimov1965independent,janson2015degenerate,liggett1997domination,o1980scaling}. For several decades the only known stationary finitely dependent processes were the block-factors: processes obtained from an iid sequence by applying a finite-range function. Indeed in 1965, Ibragimov and Linnik \cite{ibragimov1965independent,MR0322926} raised the question of whether there exist finitely dependent processes not expressible as block factors. While finitely dependent processes enjoyed significant attention in the intervening years \cite{MR1176437,MR972778,MR947821,de1989problem,de1993hilbert,gandolfi1989extremal,MR744235,janson1983renewal,ruschendorf1993regression}, it was only in 1993 that Ibragimov and Linnik's question was resolved in the affirmative by Burton, Goulet, and Meester \cite{burton1993}. Many subsequent works \cite{MR2721041,broman2005one,MR1269540,janson2015degenerate,liggett1997domination,matuvs1996two,matus1998combining} explored the properties of such processes, but the question remained: are there `natural' stationary finitely dependent processes that are not block-factors?

  Recently, Holroyd and Liggett \cite{HL} answered this question in the affirmative by proving the surprising fact that \emph{proper coloring} distinguishes between these classes of processes. More precisely, they constructed stationary finitely dependent colorings of $\Z$, and provided a simple argument showing that no block-factor is a coloring. A process $(X_i)_{i\in\Z}$ is a {\bf $\bm{q}$-coloring} if each $X_i$ takes values in $\{1,\ldots,q\}$, and almost surely $X_i\not=X_{i+1}$ for all $i$. A stationary $q$-coloring is {\bf $\bm{k}$-dependent} if the random sequences $(X_i)_{i<0}$ and $(X_i)_{i\geq k}$ are independent of one another. A process is finitely dependent if it is $k$-dependent for some $k$, and it is a coloring if it is a $q$-coloring for some~$q$. By an argument of Schramm \cite{HSW}, there is no stationary $1$-dependent $3$-coloring of $\Z$. Holroyd and Liggett \cite{HL} constructed a stationary $1$-dependent $4$-coloring and a stationary $2$-dependent $3$-coloring (implying trivially that stationary $k$-dependent $q$-colorings exist for all $k\geq 1$ and $q\geq 3$ other than $(k,q)=(1,3)$). These colorings were constructed in \cite{HL} by specifying cylinder probabilities (which are obtained in a rather mysterious way) and appealing to the Kolmogorov extension theorem, without a direct probabilistic construction on $\Z$.

  Here is a way to formalize this last concept. We say that $X=(X_i)_{i\in\m Z}$ is a {\bf finitary factor} of an iid process, or simply that $X$ is {\bf ffiid}, if it is equal in law to $F(Y)$ where $Y=(Y_i)_{i\in\m Z}$ is an iid sequence and $F$ is a {\bf translation-equivariant} function (i.e. one that commutes with translations of $\m Z$) satisfying the following property: for almost every sequence $y$ (with respect to the law of $Y$), there exists $r<\infty$ such that $F(y)_0=F(y')_0$ whenever $y'$ agrees with $y$ on $\{-r,\ldots,r\}$. Let $R(y)$ be the minimal such $r$. The random variable $R=R(Y)$ is the {\bf coding radius} of the finitary factor. In other words, $X_0$ is determined by examining only those variables $Y_i$ within a finite but random distance $R$ from the origin. Finitary factors generalize block-factors: the latter are finitary factors with bounded coding radius.

  In \cite{H} it was shown that the 1-dependent 4-coloring of \cite{HL} is  ffiid with infinite expected coding radius. However, the following question mentioned therein remained unanswered: does there exist a finitely dependent coloring that is ffiid with finite mean coding radius?

  We resolve this question as well as several others from \cite{H,HL,HL2} by constructing a new family of finitely dependent colorings whose coding radii have exponential tails.

  \begin{samepage}
  \begin{thm}\label{main}
    There exists a stationary, reversible, finitely dependent proper coloring of
    $\Z$ that is symmetric under permutations of the colors and that can be expressed in each of the following ways:
    \begin{enumerate}[(i)]
      \item as a finitary factor of an iid process, with exponential tail on the coding radius; and also
      \item as a function of a countable Markov chain with exponential tail on the return time to any given state.
    \end{enumerate}
    More precisely, there exists a stationary $k$-dependent $q$-coloring with all of the above properties for each of
    $$(k,q)=(1,5),\ (2,4),\ (3,3),$$
    as well as for all larger $q$ in each case.
  \end{thm}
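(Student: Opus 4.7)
The plan is to realize the desired coloring as an explicit local functional of a Mallows permutation of $\Z$ at a carefully chosen parameter $t \in (0,1)$ depending on the target pair $(k,q)$. Recall that the Mallows measure with parameter $t$ weights a permutation $\pi$ proportionally to $t^{\inv(\pi)}$; for $t<1$ there is a well-defined infinite-volume measure on permutations of $\Z$, and it admits a natural finitary construction by sweeping from left to right and performing iterated insertions governed by iid $\mathrm{Geom}(1-t)$ displacements. This construction localizes at geometric rate, which already yields a finitary-factor-of-iid representation for $\pi$ itself, with exponential tail on the coding radius.

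From $\pi$ the plan is to extract the coloring via a combinatorial rule involving the ``bubble'' structure that arises when one sorts $\pi$ back to the identity. Concretely, I would partition $\Z$ into maximal intervals (``blocks'') that are permuted among themselves by $\pi$ and are separated by \bubEnd\ sites, then label each block with a color chosen uniformly from the $q-1$ colors distinct from the color of the previous block. The resulting process is by construction a proper $q$-coloring and is symmetric under permutations of the color set, while reversibility follows from the reflection symmetry of the Mallows law.

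The heart of the argument is the proof of $k$-dependence. Here the plan is to compute finite-dimensional cylinder probabilities exactly by expressing them as ratios of the partition functions $\ztqn{n}$ of the joint permutation-coloring measures $\ptn{n}$, and then to establish a combinatorial identity stating that, at the specific values of $t$ adapted to each $(k,q)$, these partition functions factorize across any gap of length exceeding $k$. That factorization is precisely $k$-dependence. I would handle the boundary cases $(k,q)\in\{(1,5),(2,4),(3,3)\}$ first and then derive the larger-$q$ statements either by monotone coupling in $q$ or by decomposing a $(q+1)$-coloring into a $q$-coloring plus an occasional substitution of the extra color.

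Finally, the countable Markov chain representation is obtained by sweeping across $\Z$ and taking as state at position $n$ the finite record of pending insertions in the Mallows construction together with the color of the block currently in progress. This state space is countable and Markovian, and the exponential decay of Mallows weights gives exponential return times to the empty reference state. The ffiid representation with exponential coding radius then follows because the coding radius at the origin is dominated by the length of the block containing $0$ plus the localization radii of the Mallows insertion on either side, all of which have exponential tails. The main obstacle is the combinatorial identity in the third step: finding the correct Mallows parameter and verifying exact factorization of cylinder probabilities at those parameters is delicate and is not implied by the soft correlation decay one would get from any $t<1$.
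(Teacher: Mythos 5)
Your overall architecture---a Mallows permutation at a specially tuned $t\in(0,1)$, a bubble/block decomposition giving exponential localization, and exact factorization of cylinder partition functions as the source of $k$-dependence---is in the right spirit, and you are right that soft correlation decay for $t<1$ cannot substitute for an exact identity. However, two of your steps contain genuine gaps. First, the underlying permutation cannot be the \emph{plain} Mallows permutation of $\Z$. The construction works only when the permutation of a finite interval is weighted by $t^{\inv(\sigma)}$ \emph{times the number of proper $q$-colorings of its constraint graph}, equivalently by $u^{\#\!\bub(\Gamma_\sigma)}t^{\inv(\sigma)}$ with $u=\frac{q-1}{q-2}$, the coloring being uniform on proper colorings of $\Gamma_\sigma$ given $\sigma$. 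This bias does \emph{not} wash out in the infinite-volume limit: the limiting Lehmer code is an iid sequence of $u$-zero-weighted $t$-geometric variables, not plain $t$-geometric ones, so the limit is a two-parameter bubble-biased measure $\BMal_{t,u}$ rather than the Mallows measure you propose to sweep out; with the unbiased permutation the cylinder probabilities do not exhibit the cancellation at the tuning value of $t$, and $k$-dependence fails. Relatedly, your coloring rule is too coarse: assigning a single color to each block is not even a proper coloring of $\Z$ (all sites inside a block would share a color), and making the block color depend only on the previous block's color does not reproduce the required joint law. What is needed is to color the bubble endpoints by a stationary simple random walk on the complete graph with vertex set $\bbb{1,q}$, and then, conditionally and independently over bubbles, to fill each bubble's interior with a uniformly random proper $q$-coloring of that bubble's constraint graph consistent with its endpoint colors.

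Second, reversibility does not follow from ``reflection symmetry of the Mallows law'': the Mallows measure is not reflection invariant (for a permutation of an $n$-element interval, reflection sends $\inv(\sigma)$ to $\binom{n}{2}-\inv(\sigma)$), and the coloring-biased measure is not even inversion symmetric; indeed the constraint graph is \emph{not} reflection symmetric in law even though the coloring is. Establishing $\m P(\sigma\vdash x)=\m P(\overline\sigma\vdash x)$ requires a genuinely nontrivial combinatorial argument---an involution on ``almost proper'' buildings that exchanges one truncated-geometric coordinate of the insertion code at a time---and this is where most of the technical work beyond the tuning identity lies. Finally, the larger-$q$ cases need no monotone coupling or color-splitting device (which would in any case not obviously preserve color symmetry, reversibility, or $k$-dependence): the tuning equation \eqref{eqn:qkt} has a root $t\in(0,1)$ whenever $qk>2(k+1)$, so the same construction applies verbatim to every such pair $(k,q)$.
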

  \end{samepage}

  By the statement that a process $X=(X_i)_{i\in\Z}$ is a `function of a countable Markov chain' we mean that there exists a stationary Markov chain $(Y_i)_{i\in\Z}$ on a countable state space $S$ and a function $h$ on $S$ such that $X$ has the same distribution as $(h(Y_i))_{i\in\Z}$. A process $X$ is {\bf reversible} if $(X_i)_{i\in\Z}$ has the same distribution as  $(X_{-i})_{i\in\Z}$, and a $q$-coloring $X$ is symmetric under permutations of the colors if $(X_i)_{i\in\Z}$ has the same distribution as $(\sigma(X_i))_{i\in\Z}$ for any permutation $\sigma$ of $\{1,\ldots,q\}$.

  Both the finitary factors and the countable Markov chains arising in Theorem \ref{main}  admit simple and explicit descriptions; see the Painting Algorithm later in the introduction.  See Figure~\ref{examples} for some simulations.  These descriptions involve an additional real parameter, $t$, which must be set at a specific, irrational value depending on $k$ and $q$ in order for the coloring to be finitely dependent.
\begin{figure}
\centering
\includegraphics[width=.95\textwidth]{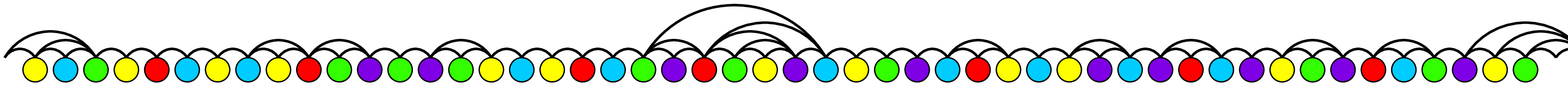}\\[5pt]
\includegraphics[width=.95\textwidth]{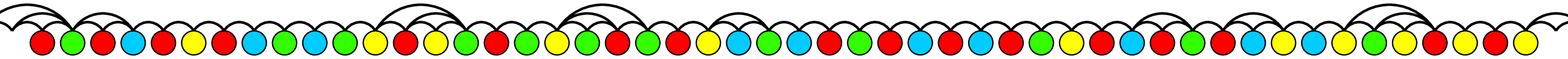}\\[5pt]
\includegraphics[width=.95\textwidth]{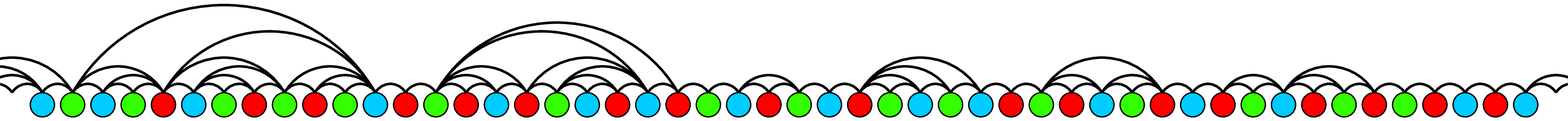}\\[5pt]
\caption{Samples of the $k$-dependent $q$-colorings from Theorem~\ref{main} for $(k,q)=(1,5),(2,4),(3,3)$ (top to bottom, respectively), together with the constraint graphs appearing in their construction.  The colors form a proper coloring of the constraint graph as well as of $\Z$.  The bubble endpoints (integers not covered by the interior of any one arc) form a Bernoulli process with carefully chosen irrational parameter depending on $k$ and $q$.  The constraint graph is not symmetric in law under reflection, even though the coloring is.}
\label{examples}
\end{figure}

  The existence of colorings satisfying properties (i) and (ii) of Theorem \ref{main} resolves open problems (v) and (ii) of \cite{HL}, respectively. The two properties are related for general stationary processes; we explain how in Theorem \ref{thm:bit}.

  We will show that our colorings with $k=1$ and $q\geq 5$ coincide with the
  colorings constructed in \cite{HL2}.
  The purpose of that paper was to show that there are symmetric finitely
  dependent $q$-colorings with $q\geq 5$.  % mysterious Chebyshev polynomials
  Thus Theorem~\ref{main} shows that these colorings are ffiid with finite mean coding radius, resolving \cite[Problem 4]{HL2}.  Moreover, the original colorings of \cite{HL} with $(k,q)=(1,4),(2,3)$ can be seen as boundary cases of our construction, as discussed later in the introduction.

  The $1$-dependent $q$-colorings from Theorem \ref{main} have the further
  property that if one conditions on the absence of color $q$, the resulting
  coloring is equal in law to the $2$-dependent $(q-1)$-coloring.
  (No other colorings from the theorem may be obtained from one another
  by conditioning in this manner.) Each of the $k$-dependent $q$-colorings
  constructed in Theorem \ref{main} is {\bf strictly $k$-dependent}, i.e.,
  $k$-dependent but not $(k-1)$-dependent.

  Sections 4 and 5 of \cite{HL} show that finitely dependent colorings can be
  written neither as block factors nor as functions of finite state Markov
  chains, respectively. The former result is a consequence of an earlier
  result in \cite{naor1991lower}, where it appears in a different form,
  motivated by applications in distributed computing. Further consequences
  and extensions appear in \cite{MR3262072} and \cite{HSW}. For example,
  block factors must contain arbitrarily long constant sequences with positive
  probability.

  In addition to showing that proper coloring distinguishes general stationary finitely dependent processes from block factors, Holroyd and Liggett \cite{HL} generalized this conclusion from proper coloring to all local constraints (i.e.\ shifts of finite type) satisfying a certain non-degeneracy condition. As a consequence of our main theorem, we will deduce an analogue of this result involving properties (i) and (ii) of the main theorem. See the discussion later in the introduction.

  We reiterate a natural conjecture suggested in \cite{H}: there exists
  a $k$-dependent $q$-coloring that is a finitary factor of an iid process
  with \textit{finite mean} coding radius if and only if $k\geq 1$, $q\geq 3$, and
  $(k,q)\not\in \{(1,3),(2,3),(1,4)\}$. Theorem \ref{main} establishes half
  of this conjecture: it remains to be seen whether every ffiid coloring has infinite mean coding radius in the remaining cases $(k,q)\in\{(2,3),(1,4)\}$.

  Coloring has applications in computer science.
  Colors may represent time schedules or communication frequencies
  for machines in a network, where adjacent machines are not permitted to conflict
  with each other. Finite dependence implies a security benefit --- an adversary
  who gains knowledge of some colors learns nothing about the others, except
  within a fixed finite distance. A ffiid coloring with finite mean coding
  radius is  desirable for the purpose of efficient computation.
  Such a coloring can be computed by the machines in distributed fashion,
  based on randomness generated locally, combined with communication with other machines within a random distance of finite mean. All machines follow the
  same protocol, and no central authority is needed. See e.g.
  \cite{linial1987distributive,naor1991lower} for more information.

\subsubsection*{Outline of proof}
We next discuss the main ideas behind the proof of Theorem~\ref{main}, which involves an intricate interplay of various ideas from combinatorics and physics.

At the heart of our construction (as well as those of \cite{HL}) is the following simple but mysterious picture.  Imagine that integers arrive in a \emph{random} order.  When an integer arrives, it is assigned a uniformly random color from those not present among its current neighbors, by which we mean the nearest integers to its left and right that arrived previously.  As a useful alternative description, the random order gives rise to a graph, which we call the \emph{constraint graph}, in which two integers are adjacent if and only if they were neighbors at some time.  (The constraint graph was also considered in \cite{H}, and may be interpreted as the planar dual of the binary search tree \cite{fill1996distribution} of a permutation.)  The final coloring is a uniformly random proper coloring of the constraint graph.

The proof of finite dependence begins with a version of this picture restricted to a finite interval, and involves remarkable cancellations that occur only when the set-up is precisely correct.  (Indeed, it is surprising that they can occur at all).  The required arrival order is not uniformly random.  Rather, it arises by re-weighting a simple underlying probability measure by the number of proper colorings of the constraint graph.  The fact that such a re-weighting can produce colorings with exceptional properties is reminiscent of the theory of two-dimensional quantum gravity, in which statistical mechanics models are studied on random planar maps that are weighted according their partition function for the model. There, as here, the model on the appropriately weighted random map has special properties that are not enjoyed by the same model on, say, a Euclidean lattice. See e.g. \cite{garban2012quantum} and references therein.

For the 1-dependent 4-coloring and 2-dependent 3-coloring of \cite{HL}, the underlying measure is uniform over permutations of an interval.  For our new construction, the underlying measure is the \emph{Mallows} measure, in which each permutation is weighted by a parameter $t$ raised to the power of the number of its inversions.  (An inversion is a pair of elements whose order is reversed.) The Mallows measure was originally introduced in statistical ranking theory \cite{mallows}, and has enjoyed a recent flurry of interest in contexts including mixing times \cite{benjamini2005mixing,diaconis2000analysis}, statistical physics \cite{starr2009thermodynamic,starr2015phase}, learning theory \cite{braverman2009sorting}, and longest increasing subsequences \cite{basu2016limit,MR3334280,mueller2013length}.  The computations and combinatorial identities  required to prove finite dependence in our case are $t$-analogues of those in \cite{HL}.  (The more usual terminology is `$q$-analogue', but in this article $q$ is reserved for the number of colors.  See \cite{stanley1997enumerative} for background on $q$-analogues.)  Since the Mallows measure is not reflection-invariant, the reversibility claimed in Theorem~\ref{main} requires a further highly non-trivial combinatorial argument.

The Mallows parameter $t$ must be chosen carefully.  Specifically, for the $q$-coloring to be $k$-dependent, the parameters $q$, $k$, and $t$ must satisfy the `tuning equation'
\begin{equation}\label{eqn:qkt}
  qt(1-t^k)=(1+t)(1-t^{k+1}).
\end{equation}
The tuning equation arises by setting a certain coefficient equal to zero in a recurrence for the cylinder probabilities of the colorings.
Finite dependence of the colorings stems from this cancellation.
This is reminiscent of a phenomenon in the theory of Schramm--Loewner evolution, in which SLE$(\kappa)$ curves possess additional distributional symmetries for special values of $\kappa$, stemming from cancellations in the coefficients of a stochastic differential equation \cite{lawler2008conformally}.

For the three cases $(k,q)=(1,5),(2,4),(3,3)$ highlighted in Theorem~\ref{main}, the required values of $t$ are respectively
  $$
    \frac{3-\sqrt{5}}{2},\quad \frac{3-\sqrt{5}}{2},\quad \text{and}\quad \frac{1+\sqrt{13}+\sqrt{2\bigl(\sqrt{13}-1\bigr)}}{4}.
  $$
The equality between the $t$ values for the pair of cases $(1,5)$ and $(2,4)$ generalizes to the pair $(1,q)$ and $(2,q-1)$ with $q\geq 4$.  This is behind the conditioning property mentioned earlier.

When restricted to finite intervals, the above construction yields a consistent family of random colorings, which extends to a coloring of $\m Z$ via Kolmogorov extension.
However, proving that this random coloring satisfies properties (i) and (ii) of the theorem requires a more direct construction.
To achieve this, we extend the random arrival picture to $\Z$.
This presents several challenges.
On a finite interval, the re-weighting introduces an extra factor every time an integer arrives at either end of the interval of its predecessors.

For the uniform model introduced in \cite{HL}, it turns out that these endpoint arrivals are sufficiently rare that their effect washes out in the limit, and the associated random order on $\Z$ is in fact uniform.  However, this means that the constraint graph has many long edges.  (A typical edge has infinite mean length, by the well-known record value waiting time property.)  This is the reason for the power law tail in the finitary factor construction of the 1-dependent 4-coloring in \cite{H}.  (Since it is also necessary to properly color the constraint graph, it turns out that this framework does not yield a finitary factor construction of the 2-dependent 3-coloring at all.  See \cite{H} and the earlier discussion.)

The situation for our model is very different.  For a fixed parameter $t$, the Mallows permutation of a sufficiently large finite interval can be naturally viewed as a perturbation of the identity, with a strong left-to-right bias in the corresponding order.  Consequently, (right) endpoint arrivals now have a positive density, and their re-weighting effect is \emph{not} washed out in the limit.  The resulting random order on $\Z$ follows a new (and quite natural) two-parameter variant of the Mallows measure, which we call the \emph{bubble-biased Mallows measure}.  (Infinite-interval versions of the standard Mallows measure were constructed in \cite{gnedin2012two}.)  As a result of the endpoint arrivals, the constraint graph is much better behaved than in the previous case.  It decomposes into a sequence of finite `bubbles', joined at their endpoints.  The length of a bubble has exponential tails, allowing us to prove properties (i) and (ii) in Theorem~\ref{main}.  Some further technical details are involved in making the transition from finite intervals to $\Z$ rigorous.  In particular, it is useful to consider convergence of the Lehmer code of a permutation (see e.g.\ \cite{beckenbach1964applied} for a definition).

The distinction between our new construction and that of \cite{HL} may be interpreted via the language of phase transition.  For the tuning equation \eqref{eqn:qkt} to have a solution in $t$, the parameters $k$ and $q$ must satisfy the inequality $qk\geq 2(k+1)$.  This is satisfied with equality along a critical curve $qk=2(k+1)$ in the $(k,q)$ plane -- see Figure~\ref{fig:phaseDiagram}.  On the curve we have $t=1$, and there are precisely two integer solutions, $(1,4)$ and $(2,3)$, giving the colorings of \cite{HL}.  (The Mallows measure reduces to the uniform measure when $t=1$.)  On one side of the curve, the construction does not work, while on the other side we obtain the colorings of this article.  This fits the signature of a phase transition: an abrupt qualitative change in behavior, with power laws at criticality, and exponential decay in the off-cricial regime.  But we believe that the same phase transition phenomenon applies to finitely dependent colorings of $\Z$ in complete generality, not just to the specific construction here (although currently no other constructions are known, besides trivial embellishments).  Indeed, it is proved in \cite{HL} that no stationary $1$-dependent $3$-coloring exists (so no solution exists on that side of the curve).  We believe that the stationary $1$-dependent $4$-coloring and $2$-dependent $3$-coloring (the `critical' cases) are unique.  (Some evidence for the former case is given in \cite{HL}.)  Moreover, we conjecture that no stationary $1$-dependent $4$-coloring or $2$-dependent $3$-coloring is ffiid with finite mean coding radius.

We reiterate that in \cite{H}, the $1$-dependent $4$-coloring was shown to be ffiid with infinite expected coding radius. We do not have an analogous explicit representation of the $2$-dependent $3$-coloring as a finitary factor of iid. A result of Smorodinsky \cite{smorodinsky1992finitary} states that stationary finitely dependent processes of equal entropy are finitarily isomorphic, implying that the 2-dependent 3-coloring of \cite{HL} is ffiid. Unfortunately, \cite{smorodinsky1992finitary} contains only a brief sketch of the proof, and the details of the argument do not seem to be available.

  \begin{figure}\label{figure:phaseDiagram}
    \tikz[domain=.3:7]{
      \begin{axis}[
        axis on top=false,
        axis x line=middle,
        axis y line=middle,
        stack plots=y,
        xmin = 0, xmax = 5.5,
        ymin = 2, ymax = 7.5,
        xlabel = {$k$},
        ylabel = {$q$},
        ytick = {2,3,4,5,6,7},
        y label style={left = 0},
        x label style={at={(current axis.right of origin)},anchor=north, below}
      ]
      \addplot+[mark=none, red, samples = 100] {2*(x+1)/x};
      \addplot+[mark=none, white, fill = blue!20, samples = 100] {2 * 1.3 / .3 - (2 * (x+1)/x)} \closedcycle;
      \end{axis}

      \begin{axis}[
        axis on top=false,
        axis x line=none, axis y line=none,
        xmin = 0, xmax = 5.5,
        ymin = 2, ymax = 7.5,
        x label style={at={(current axis.right of origin)},anchor=north, below=5mm},
        y label style={at={(current axis.above origin)},rotate=90,anchor=south east},
        xlabel = {$k$}, ylabel = {$q$},
        ytick = {2,3,4,5,6,7},
      ]
      \addplot[only marks, red, mark = square*] coordinates {
        (1,4) (2,3)
      };
      \addplot[only marks, blue] coordinates {
        (1,5) (1,6) (1,7)
        (2,4) (2,5) (2,6) (2,7)
        (3,3) (3,4) (3,5) (3,6) (3,7)
        (4,3) (4,4) (4,5) (4,6) (4,7)
        (5,3) (5,4) (5,5) (5,6) (5,7)
      };
      \draw [black, thick,rounded corners] (axis cs:.7,3.7) rectangle (axis cs:1.3,9);
      \addplot[only marks, black, mark = x, mark size = 3] coordinates {
        (1,3)
      };
      \addplot[mark = None]
        coordinates {(5,5)};
      \end{axis}
    }
    \caption{\label{fig:phaseDiagram}
      \small{ Phase diagram for $k$-dependent $q$-colorings. The phase boundary is the curve $qk=2(k+1)$. The two lattice points on this curve correspond to the $1$-dependent $4$-coloring and $2$-dependent $3$-coloring of \cite{HL}. In the region $qk<2(k+1)$ there do not exist $k$-dependent $q$-colorings (the $\times$ at $(k,q)=(1,3)$ indicates the non-trivial case ruled out by an argument of Schramm in \cite{HSW}). The $1$-dependent $q$-colorings from \cite{HL2} correspond to the outlined region. When $qk>2(k+1)$ there exists a $k$-dependent $q$-coloring that is ffiid with finite expected coding radius, by Theorem \ref{main}.
      }
    }
  \end{figure}
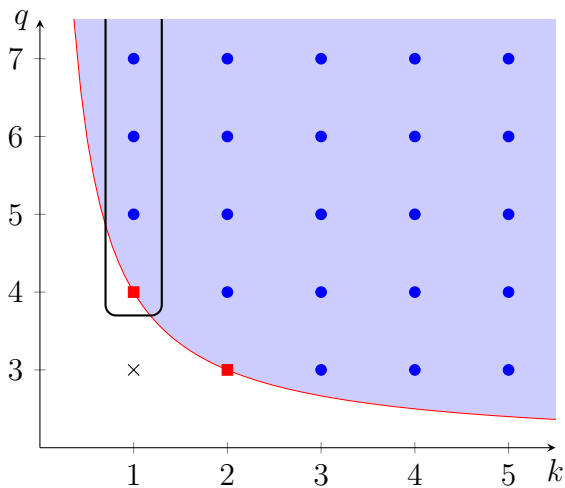

\subsubsection*{An algorithm for finitely dependent coloring.} There is a remarkably direct construction of the colorings from the main theorem, which we now present. While properties (i) and (ii) in the theorem (as well as stationarity, coloring constraints, and color symmetry) follow in a straightforward manner from the description, the finite dependence and reversibility properties are more subtle, and will be explained later.

%%%%%%%%%%%%%%%%%%%%%%%%%%%
%%%%%%%%%%%%%%%%%%%%%%%%%%%
%%                       %%
%%       Painting        %%
%%       algorithm       %%
%%                       %%
%%%%%%%%%%%%%%%%%%%%%%%%%%%
%%%%%%%%%%%%%%%%%%%%%%%%%%%

\medskip
\noindent\textbf{Painting Algorithm.}
Input: positive integers $q$ and $k$ satisfying $qk>2(k+1)$. Output: the $k$-dependent $q$-coloring $(X_i)_{i\in\m Z}$ constructed as follows.

\medskip\paragraph{\bf Stage 0.}  Let $t$ be the unique solution to \eqref{eqn:qkt} in $(0,1)$. Set $s=t(q-2)/(q-1-t)$. Start with $X_i$ unassigned for every $i\in \m Z$.

\medskip\paragraph{\bf Stage 1.}  Let $B=(B_i)_{i\in\Z}$ be an iid Bernoulli
process with each $B_i\in\{0,1\}$ taking value $1$ with probability $s$.
To each $i$ with $B_i=1$, assign a random color
$X_i\in\{1,\ldots,q\}$, in such a way that, conditional on $X$, the
subsequence $(X_i\colon B_i=1)$ of assigned values is the trajectory of a simple symmetric random walk on the complete graph with vertex set $\{1,\ldots,q\}$ at stationarity.

\medskip\paragraph{\bf Stage 2.}  Consider pairs of nearest integers $a<b$ that were assigned colors in Stage 1, that is, $B_a=B_b=1$ while $B_i=0$ for
$a<i<b$.  Independently for each such pair and conditional on Stage 1, we
fill in the missing colors via the following recursive procedure. Let $K$ be a random element of $\{a+1,\ldots,b-1\}$ with $\m P(K=\ell)=ct^\ell$, where $c$ is a constant of proportionality. Assign $X_K$ a uniformly random color in $\{1,\ldots,q\}\setminus \{X_a,X_b\}$. Conditional on the previous steps, recursively apply the same procedure to each of the intervals $\{a,\ldots,K\}$ and $\{K,\ldots,b\}$ until all integers have been assigned colors.

It is important to note that the conditional law, given Stage 1, of
the coloring $(X_i)_{i=a}^b$ for integers $a<b$ in Stage
2 is \emph{not} simply the conditional law of the final process
$(X_i)_{i\in\Z}$ restricted to $\{a,\ldots,b\}$ given $X_a$ and $X_b$.  For example, it is possible that $(X_1,X_2,X_3,X_4)=(1,2,1,2)$
(for instance if all $4$ values are assigned at Stage 1).  However, if Stage
1 assigns $X_1=1$ and $X_4=2$ but not $X_2$ or $X_3$, it is then impossible
for Stage 2 to fill the interval in this way, because both colors $1$ and $2$
are unavailable for the first insertion. Our construction is more subtle than
such na\"ive conditioning would suggest. In particular, the specific choices
of the parameters $s$ and $t$ of the Bernoulli and geometric processes are
crucial, as we shall see.

\subsubsection*{Bit-finitary factors of iid and countable Markov chains.}
In this section we investigate the connection between items (i) and (ii) of Theorem \ref{main}, that is, the connection between the expressibility of a process as a finitary factor of iid with certain desirable properties, and the expressibility of the same process as a function of a Markov chain with certain other desirable properties.

Rudolph \cite{rudolph1982mixing} proved that a bi-infinite trajectory of a mixing, countable, positive recurrent Markov chain satisfying a finite entropy condition can be expressed as a finitary factor of iid if and only if the time for the chain to hit any particular state has an exponential tail.
In this section, we provide a complement to Rudolph's result, giving a  sufficient condition under which a finitary factor process $X=F(U)$  of an iid process $U$ is also expressible as a function of a countable Markov chain. Intuitively, the condition is that $F$ has finite mean coding radius and that $F$ only has to query a finite (but random) number of the random bits of $U$ in order to compute each of its outputs.

For the purposes of this section, it is convenient for us to think of our iid random variables as taking values in the space $\{0,1\}^\N$, rather than the more usual $[0,1]$.
Endow $\{0,1\}^\N$ with the product of the uniform measure on $\{0,1\}$.
Thus, each of our random variables is an infinite string of independent uniformly random bits.
Given $x \in (\{0,1\}^\N)^\Z=\{0,1\}^{\Z\times\N}$, a finite set $S \subset \Z\times \N$, a discrete space $A$ and a function $f:\{0,1\}^{\Z\times\N}\to A$, we say that \textbf{$f(x)$ is determined by the restriction of $x$ to $S$} if there exists an element $a\in A$ such that $f(x')=f(x)$ for almost every $x' \in \{0,1\}^{\Z\times\N}$ such that the resitrictions of $x$ and $x'$ to $S$ coincide.

We say that a factor $F:\{0,1\}^{\Z\times\N}\to A^\Z$ is \textbf{bit-finitary} if for almost every $x \in \{0,1\}^{\Z\times\N}$ (with respect to the product of the uniform measure on $\{0,1\}$), there exists a finite set $S \subset \Z\times\N$ such that $F(x)_0$ is determined by the restriction of $x$ to $S$.  For $i\in\m Z$ we let $r_i(x)$ be the minimal integer $r\geq 0$ such that $F(x)_i$ is determined by the restriction of $x$ to $\{i-r,\ldots,i+r\}\times\{0,\ldots,d\}$ for some $d<\infty$. We call $\bigl(r_i(x)\bigr)_{i\in \Z}$ the \textbf{bit-finitary coding radii} of $F$. We say that $F$ has finite expected bit-finitary coding radius if $\E r_0(U) <\infty$.

\begin{samepage}
\begin{thm} \label{thm:bit} Let $U= (U_{i,j})_{(i,j)\in\Z\times\N}$ be a collection of uniform $\{0,1\}$ random variables, let $A$ be a discrete space, and let $F:\{0,1\}^{\Z\times\N}\to A$ be a bit-finitary factor with finite expected bit-finitary coding radius.  Then there exists a countable set $\mathcal{K}$ and a factor
$ G : \{0,1\}^{\Z\times\N}\to \mathcal{K}^\Z$
such that the following claims hold.
\begin{enumerate}[(i)] \item The process  $G_i(U)$ is a Markov chain.
\item There exists a function $h:\mathcal{K}\to A$ such that
\[h \circ G_i(U)=F_i(X)\]
for all $i \in \Z$ almost surely.
\end{enumerate}
\end{thm}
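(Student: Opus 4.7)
The plan is to let $G_i$ encode, for each $i \in \Z$, the ``crossing information'' at position $i$: the values of those bits of $U$ that are relevant to computing some output $F_\ell$ whose index $\ell$ lies on the opposite side of the boundary between columns $i$ and $i+1$ from the bit's column. Finite expected coding radius will ensure, via Borel--Cantelli, that this is a.s.\ a finite set, so $(G_i)$ takes values in a countable space. Concretely, for each $\ell$ I fix a canonical finite determining set $W_\ell = W_\ell(U)$, taken to be the lexicographically smallest (first in radius, then in depth) axis-aligned box $\{\ell-r,\ldots,\ell+r\}\times\{0,\ldots,d\}$ on which $F_\ell(U)$ is determined. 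By the bit-finitary hypothesis $W_\ell$ is a.s.\ finite, and minimality forces it to be \emph{self-determining}: $W_\ell(U)$ is a function of $U|_{W_\ell(U)}$. Define the crossing set
\[
  T_i(U) = W_i(U) \,\cup \bigcup_{\substack{\ell \neq i:\;W_\ell\cap(\Z_{\leq i}\times\N)\neq\emptyset,\\ W_\ell\cap(\Z_{>i}\times\N)\neq\emptyset}} W_\ell(U),
\]
and set $G_i(U) = (F_i(U),\, T_i(U)-(i,0),\, U|_{T_i(U)})$, with the set and its marking translated horizontally by $-i$ so that $G$ is translation-equivariant.

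The a.s.\ finiteness of $T_i$ reduces to the Borel--Cantelli estimate $\sum_{\ell>i}\P(r_\ell\geq \ell-i) = \sum_{m\geq 1}\P(r_0\geq m)\leq \E r_0 < \infty$, together with its symmetric counterpart for $\ell<i$, which together force only finitely many straddling witnesses to contribute. Combined with the a.s.\ finiteness of each $W_\ell$, this places $G_i$ in a countable set $\mathcal{K}$. The function $h:\mathcal{K}\to A$ is then just projection onto the $F_i(U)$-coordinate, which handles (ii).

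The heart of the argument is the Markov property (i). Decompose $U=(U^-,U^+)$, where $U^-$ and $U^+$ are the restrictions of $U$ to columns $\leq i$ and $>i$ respectively; these halves are independent. For each $j<i$, the state $G_j$ depends on $U^+$ only through bits lying in some $W_\ell$ with $\ell\leq j$; every such $W_\ell$ also straddles $i$, so these bits already lie in $T_i\cap(\Z_{>i}\times\N)$ and are recorded in $G_i$. Hence $(G_j)_{j<i}$ is a function of $U^-$ together with the portion of $G_i$ it fixes in $U^+$; symmetrically $(G_j)_{j>i}$ is a function of $U^+$ and the portion of $G_i$ in $U^-$. Conditional on $G_i$, the bits of $U^-$ and $U^+$ outside $T_i$ remain conditionally independent, which yields the conditional independence of the past and future $G$-sequences.

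The main obstacle is making the last sentence rigorous: because $T_i$ is itself random, conditioning on $G_i$ implicitly conditions on the non-straddling of every $W_\ell$ not appearing on the list, and a priori this could couple the two halves of $U$. The saving grace is self-determination, under which the events $\{W_\ell\subseteq\Z_{\leq i}\times\N\}$ and $\{W_\ell\subseteq\Z_{>i}\times\N\}$ are individually past- and future-measurable; the event ``$W_\ell$ does not straddle $i$'' is then a disjoint union of a past event and a future event, which suffices to show that conditioning on $G_i$ preserves the product structure of the remaining bits. If this decoupling proves insufficient, I would enlarge $T_i$ via a recursive closure that absorbs into $T_i$ any $W_\ell$ meeting the current set, yielding a genuinely self-determining finite set by the same Borel--Cantelli estimate.
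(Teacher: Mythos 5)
Your high-level plan (let the state at $i$ record the bits shared by the computations on the two sides of $i$) is the same as the paper's, and your countability step, the Borel--Cantelli bound for the finiteness of $T_i$, and the definition of $h$ are fine. The gap is in the Markov property, and it is genuine. First, the ``self-determination'' claim is false: whether $W_\ell(U)$ equals a given box $B$ requires that \emph{no box of strictly smaller radius and arbitrary depth} determines $F_\ell(U)$, and those candidate boxes are not subsets of $B$. For example, define $F_\ell(x)=x_{\ell,0}$ if $x_{\ell,7}=0$ and $F_\ell(x)=x_{\ell+1,0}$ if $x_{\ell,7}=1$; when $x_{\ell,7}=1$ and $x_{\ell,0}=x_{\ell+1,0}$ one gets $W_\ell(x)=\{\ell-1,\ell,\ell+1\}\times\{0\}$, but flipping the single bit $x_{\ell,7}$, which lies outside this box, changes $W_\ell$ to $\{\ell\}\times\{0,\ldots,7\}$. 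Second, and more seriously, the factorization ``$(G_j)_{j<i}$ is a function of $U^-$ together with the portion of $G_i$ it fixes in $U^+$'' is unjustified: $G_i$ records only the \emph{union} $T_i$ and the bits on it, and for $j<i$ with $r_j>i-j$ one cannot recover $W_j$ (equivalently $r_j$) from $(U^-,G_i)$, because certifying that no radius strictly between $i-j$ and $r_j$ suffices involves bits in columns $>i$ at arbitrary depths, beyond the finite-depth profile recorded in $T_i$. For the same reason the decoupling step is not established: conditioning on $G_i$ carries positive information about straddling boxes whose extent depends jointly on both halves, and your (correct, but not self-determination-based) remark that each non-straddling event is past- or future-measurable does not by itself restore the product structure. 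Finally, the fallback does not work: the recursive closure absorbing every $W_\ell$ that meets the current set is typically infinite --- for a block factor with $r_\ell\equiv 1$ the boxes $W_\ell$ and $W_{\ell+1}$ always intersect, so the closure contains all of $\Z\times\{0\}$.

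These measurability issues are exactly what the paper's proof is engineered to avoid, by two devices you could adopt. It first fixes a \emph{deterministic} depth function $d(k)$ (with a $2^{-k}$ slack so that the modified radius $R_i$, defined as the least $r$ with $F(U)_i$ determined on $[i-r,i+r]\times[0,d(r)]$, still has finite mean); thus each query region is parametrized by the single random radius $R_i$, rather than by a pair (radius, depth) whose identification causes the trouble above. It then lets $G_i$ record not a union of individually identified boxes but a filled-in region $K_i$: the whole column interval $[T_i^-,T_i^+]$ (leftmost column queried from the right of $i$, rightmost column queried from the left) together with a depth profile $D_{i,j}$, so that everything queried by outputs on one side of $i$ and visible from the other side lies inside $K_i$. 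With that structure the regions queried to compute $(G_j)_{j\geq 1}$ and $(G_j)_{j\leq -1}$ intersect only inside the region recorded by $G_0$, which is what yields the conditional independence. Your write-up would need either this kind of restructuring or a substitute argument for the two claims flagged above.
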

\end{samepage}

It is not hard to see that the construction of the finitary factors in Theorem \ref{main} can be taken to be bit-finitary, so that it would be possible to deduce item (ii) of that theorem from Theorem \ref{thm:bit}. (For this example, however, there is a more obvious construction of the Markov chain, which we use to prove item (ii) of Theorem \ref{main} directly.)

\subsubsection*{Compact Markov chains.}
While we now have an entire family of finitely dependent colorings, all the known proofs of finite dependence rely on delicate cancellations in the finite dimensional distributions.  It is natural to seek examples whose finite dependence follows by more direct reasoning.  Here is one potential candidate.  Consider a stationary discrete-time Markov chain on a compact (but perhaps uncountable) metric space.  Any partition of the space into $q$ parts immediately gives a stationary $\{1,\ldots,q\}$-valued process.  Suppose that the Markov chain always moves by at least distance $\epsilon>0$ at every step.  Then by choosing a finite partition into parts of diameter less than $\epsilon$ (which is possible by compactness), we would obtain a stationary proper coloring.  Suppose that in addition the chain mixes perfectly $k$ steps, in the sense that from any initial state it is at stationarity at time $k$.  Then the coloring would be finitely dependent.

We do not know whether there exists a Markov chain with all the above properties. (In particular we do not know how to construct one by ``working backwards'' from the known colorings).  We show that there is no \emph{reversible} chain with the desired properties.
\begin{prop}\label{prop:noReversibleCompactChain}
  If $(X_n)_{n\in\m Z}$ is a stationary reversible Markov process on a compact metric space $(S,d)$ such that $X_0$ is independent of $X_k$ for some integer $k>0$, then there does not exist $\ve>0$ such that $d(X_0,X_1)\geq \ve$ almost surely.
\end{prop}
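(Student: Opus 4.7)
The plan is to use reversibility plus the independence hypothesis to force $X_0$ and $X_1$ to actually be independent, at which point compactness handles the rest. Let $\mu$ denote the stationary law of $X_0$ and let $P:L^2(\mu)\to L^2(\mu)$ be the transition operator $(Pf)(x)=\E[f(X_1)\mid X_0=x]$. Reversibility in the sense of the paper is equivalent to $P$ being self-adjoint on $L^2(\mu)$.

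The first step is to identify $P^k$. For bounded measurable $f,g$, the independence of $X_0$ and $X_k$, together with stationarity, gives
\begin{equation*}
\langle f,P^k g\rangle_\mu=\E\bigl[f(X_0)\,g(X_k)\bigr]=\int f\,d\mu\int g\,d\mu=\langle f,\Pi g\rangle_\mu,
\end{equation*}
where $\Pi$ is the orthogonal projection onto constant functions. Hence $P^k=\Pi$ as operators on $L^2(\mu)$.

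The second step upgrades this to $P=\Pi$. On the closed subspace $V\subset L^2(\mu)$ of mean-zero functions, $P|_V$ is self-adjoint and satisfies $(P|_V)^k=0$. For a self-adjoint bounded operator one has $\|T^n\|=\|T\|^n$ (a direct consequence of the spectral theorem, or of iterating the identity $\|T^2\|=\|T^*T\|=\|T\|^2$), so $P|_V=0$ and therefore $P=\Pi$. Probabilistically, this says $X_1$ is independent of $X_0$ and has law $\mu$, so $(X_0,X_1)$ is an i.i.d.\ pair.

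The final step derives the contradiction from compactness. Cover $S$ by finitely many open balls $B_1,\ldots,B_n$ of radius $\ve/2$; since $\sum_i\mu(B_i)\geq\mu(S)=1$, some $B_j$ satisfies $\mu(B_j)>0$. Then $\m P(X_0\in B_j,\,X_1\in B_j)=\mu(B_j)^2>0$, and on this event $d(X_0,X_1)<\ve$, contradicting the hypothesis that $d(X_0,X_1)\geq\ve$ almost surely. The only nontrivial ingredient is the spectral identity $\|T^n\|=\|T\|^n$ used to pass from $P^k=\Pi$ to $P=\Pi$; this would fail for general Markov operators and is precisely where the reversibility hypothesis is used.
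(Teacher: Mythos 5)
Your proof is correct, and it takes a genuinely different route from the paper's. You identify $P^k$ with the orthogonal projection $\Pi$ onto constants and then invoke the spectral-theoretic fact $\|T^n\|=\|T\|^n$ for self-adjoint $T$ to kill $P$ on the mean-zero subspace, concluding the stronger statement that $X_0$ and $X_1$ are themselves independent; after that, compactness enters only through a finite cover by $\ve/2$-balls and a positive-measure ball, an elementary pigeonhole. The paper never makes $P^k=\Pi$ explicit: it shows $P^i=P^k$ for $i\geq k$, deduces $P^{k-1}=P^k$ by expanding $\|P^{k-1}f-P^kf\|^2$ as inner products (this is where self-adjointness is used there), inducts down to the idempotency $P=P^2$, and then derives the contradiction via a partition of unity $\{f_i\}$ subordinate to a cover by $\ve/4$-balls: the step-size hypothesis forces $\langle Pf_i,f_i\rangle=0$, idempotency and self-adjointness give $\|Pf_i\|^2=\langle f_i,Pf_i\rangle=0$, and summing yields $P\mathbf{1}=0$, which is absurd. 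Your version buys a cleaner conceptual endpoint (consecutive states are independent, so the geometric hypothesis is contradicted trivially) at the cost of quoting the norm identity for self-adjoint operators; the paper's version stays entirely at the level of elementary inner-product manipulations but needs the partition-of-unity device to exploit the minimal step size. Both arguments use reversibility in an essential and analogous way, and both would fail without it, as your closing remark correctly notes. Two small points worth keeping in mind if you write this up: justify $\E[g(X_k)\mid X_0]=(P^kg)(X_0)$ via the Markov property and stationarity (time-homogeneity of the transition operator), and note that $V$ is $P$-invariant (by stationarity of $\mu$) so that $(P|_V)^k=P^k|_V$ and $P|_V$ is self-adjoint on $V$.
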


However, there is an example if one allows the state space to be non-compact.
\begin{prop}\label{prop:existsNonCompact}
  There exists a stationary Markov process $(X_n)_{n\in\m Z}$ on a (non-compact) countable metric space $(S,d)$ such that $X_0$ is independent of $X_{2}$ and $d(X_0,X_2)\geq 1$ a.s.
\end{prop}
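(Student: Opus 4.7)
My plan is to construct $(X_n)$ as an explicit shift of iid, then equip the state space with an auxiliary structure that secures the distance condition. Let $(Y_n)_{n \in \m Z}$ be iid random variables taking values in a countable infinite set $T$, and define $X_n = (Y_n, Y_{n+1})$ on $S := T \times T$. Then $(X_n)$ is stationary (inherited from the iid input) and Markov: conditional on $X_n = (y, y')$, the variable $Y_{n+1} = y'$ is already determined, so $X_{n+1} = (y', Y_{n+2})$ depends on the past only through $X_n$ together with the fresh independent variable $Y_{n+2}$. Moreover $X_0 \perp X_2$, because $X_0 = (Y_0, Y_1)$ and $X_2 = (Y_2, Y_3)$ are functions of disjoint blocks of the iid input.

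Take $T = \m N$, so that $S = \m N \times \m N$ is countable and non-compact, and equip $S$ with a metric under which distinct points lie at distance at least $1$---the simplest choice being the discrete metric $d(u,v) = \mathbf 1_{u \neq v}$, though the $\ell^1$ metric also works. Under such a metric the condition $d(X_0, X_2) \geq 1$ almost surely reduces to $X_0 \neq X_2$ almost surely.

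The main obstacle lies in this last step. Under the naive shift-of-iid construction, $\m P(X_0 = X_2) = \sum_x \m P(X_0 = x)^2$ is strictly positive for any probability law on a countable set, so the distance condition cannot hold under that construction alone. The plan is to overcome this by lifting the chain to an enlarged countable state space $\tilde S = S \times V$ by appending an auxiliary coordinate $V_n$ whose dynamics are synchronized with the shift and designed so that the pair $(\tilde X_0, \tilde X_2)$ is forced into the off-diagonal region $\{\tilde u \neq \tilde v\}$, while the lifted process $(\tilde X_n)$ remains stationary, Markov, and two-step independent. The metric on $\tilde S$ is then chosen to assign distance at least $1$ to every pair of distinct points of $\tilde S$, so that the desired conclusion follows. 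Constructing an auxiliary coordinate $V_n$ that simultaneously respects shift-stationarity, the Markov property, the independence $\tilde X_0 \perp \tilde X_2$, and the off-diagonal constraint is the technical heart of the proof, and I expect this reconciliation---between the determinism needed to separate $\tilde X_0$ and $\tilde X_2$ and the independence required at time $2$---to be the step that demands the most care.
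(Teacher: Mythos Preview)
Your proposal is not a proof but a plan, and the plan cannot be completed as stated. You correctly observe that for any countable-state process with $X_0$ and $X_2$ independent and identically distributed one has
\[
\m P(X_0=X_2)=\sum_{x}\m P(X_0=x)^2>0,
\]
but you then propose to repair this by lifting to $\tilde X_n=(X_n,V_n)$ while \emph{preserving} stationarity and the independence $\tilde X_0\perp\tilde X_2$. That is impossible for exactly the reason you identified: any such lift again has $\tilde X_0$ and $\tilde X_2$ independent with a common countable law, so $\m P(\tilde X_0=\tilde X_2)>0$ by the same computation. No auxiliary coordinate can reconcile these requirements.

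In fact the displayed statement contains a typographical slip: the intended inequality is $d(X_0,X_1)\geq 1$, matching Proposition~\ref{prop:noReversibleCompactChain} and the paper's own proof, which verifies $X_n\neq X_{n+1}$ rather than $X_0\neq X_2$. Once the target is corrected, your overlapping-window idea $X_n=(Y_n,Y_{n+1})$ is on the right track but still falls short: with $Y_n$ iid one gets $\m P(X_0=X_1)=\m P(Y_0=Y_1=Y_2)>0$. The paper resolves this by replacing the single iid symbol $Y_n$ with an \emph{infinite column} of iid bits $(\omega_{n,j})_{j\in\m N}$ and defining $X_n$ to be the pair of columns $n-1,n$ truncated at the height $h(n)$ of their first discrepancy. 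The truncation makes the state space countable and, crucially, bakes in the constraint that the two halves of $X_n$ differ in the last row; a short parity argument then forces $X_n\neq X_{n+1}$. Independence of $X_0$ and $X_2$ holds because they are functions of disjoint columns, and the Markov property follows since columns $n-1,n$ screen off the past from the future.
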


\subsubsection*{Shifts of finite type and higher dimensions.} From a finitely dependent coloring of $\m Z$ one may construct finitely dependent colorings in higher dimensions, as well as finitely dependent processes satisfying more general local constraints (namely non-lattice shifts of finite type), as shown in \cite{HL}. Applying this to the colorings from Theorem~\ref{main} yields the next two corollaries, which we state after giving the necessary definitions.

The hypercubic lattice is the graph with vertex set $\m Z^d$ and an edge between $u$ and $v$ whenever $\|u-v\|_1=1$; the graph is also denoted $\m Z^d$. A process on $\m Z^d$ is stationary if it is invariant in law under all translations of $\m Z^d$, and it is ffiid if it is equal in law to $F(Y)$ where $Y$ is an iid process on $\m Z^d$ and $F$ is a translation-equivariant function satisfying the following property: for almost every sequence $y$ (with respect to the law of $Y$), there exists $r<\infty$ such that $F(y)_0=F(y')_0$ whenever $y'$ agrees with $y$ on $\{-r,\ldots,r\}^d$. Let $R(y)$ be the minimal such $r$. We call the random variable $R=R(Y)$ the {\bf coding radius} of the process.  A process indexed by a graph is {\bf $\bm{k}$-dependent} if its restrictions to two subsets of $V$ are independent whenever the subsets are at graph-distance greater than $k$ from each other.

The following is a consequence of our Theorem \ref{main} combined with methods of \cite{HL}.

\begin{cor}\label{cor:highDim}
  Let $d\geq 2$. There exist integers $q=q(d)$ and $k=k(d)$ such that:
  \begin{enumerate}
  \item there exists a ffiid $1$-dependent $q$-coloring of $\m Z^d$ with exponential tail on the coding radius;
  \item there exists a stationary ffiid $k$-dependent $4$-coloring of $\m Z^d$ with exponential tail on the coding radius.
  \end{enumerate}
\end{cor}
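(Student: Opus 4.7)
My plan is to combine Theorem~\ref{main}---which furnishes stationary ffiid finitely dependent colorings of $\m Z$ with exponential tails on the coding radius---with the recipe of \cite{HL} for lifting finitely dependent colorings of $\m Z$ to $\m Z^d$, and to verify that the lift preserves the ffiid and exponential-tail properties.

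For item~(i), I use the following stratified construction. Let $X$ be the stationary $1$-dependent $q_0$-coloring of $\m Z$ from Theorem~\ref{main} (with, say, $q_0 = 5$), realized as a ffiid of an iid input with exponential-tail coding radius. Take independent copies $X^{(w)}$, indexed by $w \in \m Z^{d-1}$ and driven by independent iid inputs, and define
\[
Y(v_1, w) := \bigl(X^{(w)}(v_1),\; w_1 + \cdots + w_{d-1} \bmod 2\bigr).
\]
Properness holds because adjacent vertices in the $v_1$-direction differ in the first coordinate by properness of $X^{(w)}$, while adjacent vertices in any other direction flip the parity coordinate. For $1$-dependence, let $A, B \subset \m Z^d$ have graph distance greater than $1$. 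The parity coordinate is deterministic; different $w$-fibers use independent copies of $X$; and within any fixed fiber $w$, two points $(v_1, w), (v_1', w)$ with one in $A$ and one in $B$ satisfy $|v_1 - v_1'| = d_{\m Z^d}\bigl((v_1, w), (v_1', w)\bigr) > 1$, so the $1$-dependence of $X$ yields the required independence within that fiber. This produces a ffiid $1$-dependent $(2q_0)$-coloring of $\m Z^d$, establishing item~(i).

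For item~(ii), preserving only four colors while passing to $\m Z^d$ is more delicate---a naive stratified product either destroys properness or forces the color alphabet to grow---so I would invoke the higher-dimensional construction from \cite{HL} (stated there in greater generality for non-lattice shifts of finite type), applied to the $2$-dependent $4$-coloring of $\m Z$ from Theorem~\ref{main}. That construction uses auxiliary iid randomness in the transverse coordinates to resolve color clashes across layers without enlarging the alphabet, at the cost of an increased dependence level $k(d)$.

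The final step is to verify that both lifts preserve ffiid and exponential tail. In each case the output at $0 \in \m Z^d$ is determined by finitely many queries into the iid inputs, and the $\ell_\infty$-box coding radius in $\m Z^d$ is bounded by the $1$D coding radius of $X$ plus a constant depending only on the construction; exponential tails are stable under addition of a constant, so the higher-dimensional coding radii inherit exponential tails from Theorem~\ref{main}. I expect the main obstacle to be the detailed verification in item~(ii): the color-preserving construction of \cite{HL} must be carefully adapted, and one must argue that the auxiliary transverse randomness does not spoil finite dependence or the exponential-tail bound.
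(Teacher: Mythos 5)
The genuine gap is in your construction for item (i). The process $Y(v_1,w)=\bigl(X^{(w)}(v_1),\,w_1+\cdots+w_{d-1}\bmod 2\bigr)$ is indeed a proper coloring of $\m Z^d$ and is $1$-dependent, but it is not ffiid, because it is not even stationary: the parity coordinate is a deterministic, non-translation-invariant function of the site, so translating by a unit vector in a transverse direction changes the law (the second coordinate of $Y$ at the origin is a.s.\ $0$, while at $e_2$ it is a.s.\ $1$). Every finitary factor of an iid process on $\m Z^d$, being the image of an iid field under a translation-equivariant map, is stationary under all translations of $\m Z^d$, so $Y$ cannot be realized as one, and Corollary~\ref{cor:highDim}(i) asserts precisely a ffiid coloring. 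The defect is not easily patched: a global random phase shared by all sites is not a factor of iid (it produces a non-ergodic process), and replacing the parity by a finitely dependent coloring of $\m Z^{d-1}$ in the second coordinate destroys finite dependence, since subsets of $\m Z^d$ that are far apart can have overlapping projections onto the transverse coordinates and would then share transverse colors. This obstruction is exactly why the higher-dimensional statement is not a soft product construction.

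Your item (ii), by contrast, is essentially the paper's route, and in fact the paper handles \emph{both} items this way: it invokes the $\m Z^d$ constructions of \cite[Corollaries 5 and 6]{HL}, replaces the $1$-dependent $4$-coloring of $\m Z$ used there by the ffiid $1$-dependent $5$-coloring of Theorem~\ref{main}, and observes that the output remains ffiid with exponential tail because the construction combines only a bounded number of independent ingredients, and the maximum of finitely many independent exponential-tail coding radii again has an exponential tail. (Minor point: the shift-of-finite-type generality in \cite{HL} is a one-dimensional statement, separate from the $\m Z^d$ corollary.) So the correct repair of your item (i) is the same deferral to \cite{HL} that you already make for item (ii).
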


Coloring is a special case of the following more general notion, in which the requirement that adjacent colors differ is replaced with arbitrary local constraints. A {\bf shift of finite type} is a set of configurations $S$ characterized by an integer $k$ and a set $W\subseteq \{1,\ldots,q\}^k$ as follows:
$$
  S=S(q,k,W):=\bigl\{x\in \{1,\ldots,q\}^{\m Z}\colon (x_{i+1},\ldots,x_{i+k})\in W\ \forall i\in\m Z\bigr\}.
$$
We call the shift of finite-type {\bf non-lattice} if for some $w\in W$ we have that
$$
  \gcd\bigl\{t\geq 1\colon\exists x\in S\ s.t.\ (x_1,\ldots,x_k)=(x_{t+1},\ldots,x_{t+k})=w\bigr\}=1.
$$
\begin{cor}\label{cor:shifts}
  Let $S$ be a non-lattice shift of finite type on $\m Z$. There exists an integer $k$ (depending on $S$) and a $k$-dependent ffiid process $X$ with exponential tail on the coding radius such that the random sequence $X$ belongs to $S$ almost surely.
\end{cor}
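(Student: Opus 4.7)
The plan is to reduce Corollary~\ref{cor:shifts} to Theorem~\ref{main} via the reduction of \cite{HL} that transforms a stationary finitely dependent coloring of $\Z$ into a stationary finitely dependent process supported on a non-lattice shift of finite type. The new contribution is to verify that, when the input coloring has the additional properties guaranteed by Theorem~\ref{main}, namely ffiid with exponential tail on the coding radius, these properties are inherited by the output process.

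Let $S = S(q_0, k_0, W)$ be the given SFT. The non-lattice hypothesis provides a word $w \in W$ such that the set $D = \{t \geq 1 : \exists x \in S \text{ with } (x_1,\ldots,x_{k_0})=(x_{t+1},\ldots,x_{t+k_0})=w\}$ has $\gcd D = 1$, and the Sylvester--Frobenius theorem then yields $M$ such that every integer $n \geq M$ lies in the semigroup generated by $D$. Consequently, for every $n \geq M$ one can fix a deterministic bridge word $v_n$ of length $n$ that begins and ends with $w$ and extends to a valid element of $S$. Next, I would take a stationary $\kappa$-dependent $Q$-coloring $C$ from Theorem~\ref{main} with $Q$ chosen sufficiently large. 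Following \cite{HL}, define a stationary renewal point process $P \subset \Z$ by a local color pattern whose indicator is a deterministic function of $C$ in a window of size $O(M)$ and whose consecutive gaps are forced to be at least $M$ (for example, declare $i \in P$ iff $C_i = 1$ and $C_j \neq 1$ for every $j \in [i-M,i+M] \setminus \{i\}$). Place $w$ at each $i \in P$ and in each gap of length $n$ insert the bridge word $v_n$; the resulting stationary $S$-valued process $X$ is finitely dependent by the renewal decomposition argument of \cite{HL}.

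The main obstacle is establishing that $X$ is ffiid with exponential tail on the coding radius. To compute $X_i$, one must locate the nearest renewal positions $L_i \leq i < R_i$ and read off the entry of $v_{R_i - L_i}$ at position $i - L_i$; this information is determined by the restriction of $C$ to $[L_i - M, R_i + M]$, together with the ffiid information needed to compute $C$ at the finitely many positions used to decide the renewal pattern near $L_i$ and $R_i$. The inter-renewal gap $R_i - L_i$ is an inter-arrival distance in a stationary point process of positive density obtained as a local function of the finitely dependent coloring $C$; positivity of the density follows from the color-symmetry of $C$ together with standard lower bounds on cylinder probabilities for finitely dependent colorings, and exponential tails on the inter-arrival gap follow from standard large-deviation estimates for finitely dependent processes of positive density (cf.~\cite{janson2015degenerate}). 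Convolving this exponential tail with the exponential tail of the coding radius of $C$ then yields the exponential tail on the coding radius of $X$, completing the plan.
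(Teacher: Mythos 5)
Your reduction has the right starting ingredients (the non-lattice/Frobenius argument producing bridge words of every sufficiently large length, and a marker process extracted locally from the coloring of Theorem~\ref{main}), but the key step fails: the process $X$ you build is not finitely dependent, and the appeal to ``the renewal decomposition argument of \cite{HL}'' does not cover it. Your renewal set $P$ has gaps that are bounded below by $M$ but \emph{unbounded above} (they merely have exponential tails), and inside a gap of length $n$ you insert a \emph{deterministic} word $v_n$. Consequently $X_i$ is a function of the locations of the nearest renewals on both sides of $i$, which can be arbitrarily far away. On the (exponentially small but positive-probability) event that a single gap covers both $(-\infty,0)$ and $[k,\infty)\cap[0,N]$, the restrictions of $X$ to the two half-lines are both determined by the same pair of gap endpoints, and more generally each half-line carries information about the renewal geometry near the other. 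Finite dependence requires \emph{exact} independence at distance $k$, not correlations that are merely exponentially small, so this construction is in general not $k$-dependent for any $k$; at the very least you give no argument for the required exact cancellation, and none is to be expected.

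The paper avoids this entirely by using the reduction of \cite{HL} verbatim, which is a \emph{block factor} (finite-range function) of a finitely dependent coloring with sufficiently many colors: the break points are produced by a local rule whose consecutive gaps are deterministically bounded both below (so that bridge words of every occurring length exist) and above (so that no unbounded search is ever needed). Finite dependence of the output is then automatic, since a block factor of a finitely dependent process is finitely dependent, and the ffiid property with exponential tail transfers because computing $X_0$ only requires the coloring on a window of deterministically bounded size --- whence the coding radius of $X$ is bounded by the maximum of a \emph{fixed finite} number of independent exponential-tail coding radii, which again has exponential tail (this is exactly the one-line argument in Section~\ref{sec:highDim}). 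To repair your write-up you would either have to quote and use the bounded-gap marker construction of \cite{HL} (making $X$ a genuine block factor of the coloring), or else supply a new proof of exact $k$-dependence for an unbounded-gap renewal filling, which is not available. Your convolution argument for the exponential tail of the coding radius is fine as far as it goes, but it addresses the easier half of the problem.
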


Intermediate in generality between $q$-colorings and shifts of finite type is the class of stochastic processes taking values in the vertex set of a finite graph such that realizations of the process are a.s.\! paths. For the complete graph on $q$ vertices $K_q$, such a process is precisely a $q$-coloring. A natural modification of the construction of \cite{HL} was systematically investigated in \cite{levy2015}, in which the graph $K_q$ was replaced with a weighted graph. It was found that, other than straightforward modifications of the $1$-dependent $4$-coloring and $2$-dependent $3$-coloring of \cite{HL}, no other finitely dependent processes arise in this manner. It would be interesting to see if the obvious $t$-analogue of this result is true.

\subsubsection*{Outline of the paper}

Section~\ref{sec:background} covers background material and simple facts about the combinatorial objects we will use in the proof of the main theorem.
Section~\ref{sec:finiteMallowsColoring} constructs the colorings in the main theorem by starting on finite intervals and using Kolmogorov extension.
Section~\ref{sec:reverse} shows that the colorings are reversible.
Sections~\ref{sec:bubbles} and \ref{sec:coloringIntegers} complete the proof of the main theorem by providing a second construction of the colorings as a finitary factor with exponential tails on the coding radius.

The remaining results claimed in the introduction are proven in Sections~\ref{sec:bitproof} through \ref{sec:highDim}. Open problems are in Section \ref{sec:open}.

%%%%%%%%%%%%%%%%%%%%%%%%%%%
%%%%%%%%%%%%%%%%%%%%%%%%%%%
%%                       %%
%%       Section 2       %%
%%       Background      %%
%%                       %%
%%%%%%%%%%%%%%%%%%%%%%%%%%%
%%%%%%%%%%%%%%%%%%%%%%%%%%%

\section{Permutations, Codes, Colorings, and Graphs}
\label{sec:background}

This section introduces notation and basic facts used in the proof of Theorem~\ref{main}.
As stated previously, the essence of this theorem is that finitely dependent colorings arise as $t$-analogues of the random colorings in \cite{HL}.
The $t$-analogue of a positive integer $n$ is $[n]_t:=1+t+\cdots +t^{n-1}=(1-t^n)/(1-t)$.
Many numerical equalities that are combinatorial in nature generalize to polynomial identities between $t$-analogues.
This phenomenon appears frequently in algebraic combinatorics \cite{stanley1997enumerative}.
The $t$-\textbf{factorial} $[n]_t^!$ and the $t$-{\bf binomial coefficient} $\binom{n}{k}_t$ are defined via the formulas
$$
  [n]^!_t :=\prod_{k=1}^n[k]_t\quad\text{ and }\quad \binom{n}{k}_t:=\frac{[n]^!_t}{[k]^!_t[n-k]^!_t}.
$$
There are $n!$ permutations in $S_n$. A $t$-analogue of this fact is that
\begin{equation}\label{eq:tMalId}
  [n]^!_t=\sum_{\sigma\in S_n}t^{\inv(\sigma)},
\end{equation}
where the inversion number $\inv(\sigma)$ is defined to be
$$
  \inv(\sigma):=\#\bigl\{1\leq i<j\leq n\colon \sigma(i)>\sigma(j)\bigr\}.
$$
Equation \eqref{eq:tMalId} is well known and easy to prove, if one uses the right bijection (see e.g. \cite[Prop. 1.3.17]{stanley1997enumerative}). It also follows from the proof of Lemma~\ref{lem:geomMal} later in this section.

The \textbf{Mallows measure} $\Mal_t$ with parameter $t$ is the probability measure on $S_n$ assigning to each permutation $\sigma\in S_n$ a probability of $t^{\inv(\sigma)}/[n]_t^!$.
A {\bf Mallows random permutation} is a random element of $S_n$ whose law is a Mallows measure.
Since $\inv(\sigma)=\inv(\sigma^{-1})$, a Mallows random permutation is equal in law to its inverse.
Various statistics of Mallows random permutations are related to geometric random variables.
We will need a rather extensive array of variants of the geometric distribution.

Let $X$ be a random variable.
\begin{enumerate}
  \item $X$ is an \textbf{$i$-truncated, $t$-geometric} random variable if
  \[\P(X = j) = \frac{t^j}{1+t+\cdots+t^i},\qquad 0\leq j\leq i.\]
  \item $X$ is a \textbf{$u$-zero-weighted, $i$-truncated, $t$-geometric} random variable if
  \[\P(X = j) = \frac{u^{\mathbbm{1}[j=0]}t^j}{u+t+\cdots+t^i},\qquad 0\leq j\leq i.\]
  \item $X$ is a \textbf{$u$-max-weighted, $i$-truncated, $t$-geometric} random variable if
  $$
    \m P(X=j)=\frac{u^{\mathbbm{1}[j=n]}t^j}{1+t+\cdots+t^{n-1}+ut^n},\qquad 0\leq j\leq n.
  $$
  \item $X$ is a \textbf{$u$-end-weighted, $i$-truncated, $t$-geometric} random variable if
  \[
    \P(X = j) = \frac{u^{\mathbbm{1}[j\in\{0,i\}]}t^j}{u+t+\cdots+t^{i-1}+ut^i},\qquad 0\leq j\leq i.
  \]
  \item $X$ is a \textbf{$u$-zero-weighted, $t$-geometric} random variable if
  \[
    \P(X = j) = \frac{u^{\mathbbm{1}[j=0]}t^j}{u+\frac{t}{1-t}},\qquad 0\leq j<\infty.
  \]
\end{enumerate}

Intervals of integers are sets of the form $I\cap \m Z$ where $I$ is an interval of real numbers.
We write $\bbr{a,b}$ for $[a,b)\cap \m Z$ and we use similar blackboard-bold notation for other types of intervals as well.
The cardinality of a set is denoted by $\# S$.

As we will be constructing colorings directly on $\m Z$, it is convenient to introduce notation for permutations of arbitrary integer intervals $I$, which may be finite or infinite.
A permutation of $I$ is a bijection from $I$ to itself, and we write $\Sym(I)$ for the set of all such bijections.
We identify $\Sym(I)$ with the subset of $\Sym(\m Z)$ consisting of permutations fixing all elements of $\m Z\setminus I$.
A \textbf{finite permutation} is a permutation fixing all but finitely many integers.
For permutations $\sigma,\tau\in \Sym(I)$, we write $\sigma\circ \tau$ for the permutation mapping $i$ to $\sigma\bigl(\tau(i)\bigr)$.
For a sequence of permutations $\{\sigma_j\}_{j\in J}$ indexed by a finite interval $J=\bbb{a,b}$ of $\m Z$, we denote the composite permutation by
$$
\bigCircle_{j\in J}\sigma_j:=
  \sigma_{a}\circ \sigma_{a+1}\circ\cdots\circ \sigma_{b -1}\circ \sigma_{b}.
$$

The Lehmer code is a standard way of encoding permutations of finite intervals by a sequence of integers \cite{knuth1998art}.
We will use an extension of this to (possibly infinite) intervals $I$ of $\m Z$.
For such intervals, we define the \textbf{Lehmer code} to be the map $\sL\colon\Sym(I)\to\bbb{0,\infty}^I$ given by
$$
\sL(\sigma)_i=\#\bigl\{ j\in I\colon j>i\text{ and }\sigma(j)<\sigma(i)\bigr\},\quad \sigma\in\Sym(I),\ i\in I.
$$
The Lehmer code is a refinement of the \textbf{inversion number},
$$
\inv(\sigma):=\#\bigl\{(i,j)\in I^2\colon j>i\text{ and }\sigma(j)<\sigma(i)\bigr\},
$$
in the sense that $\sum_{i\in I}\sL(\sigma)_i=\inv(\sigma)$. A variant of the Lehmer code is the \textbf{insertion code}, which is the map $\sLtilde\colon \Sym(I)\to\bbb{0,\infty}^I$ given by
$$ \sLtilde(\sigma)_i:=\sL(\sigma)_{\sigma^{-1}(i)}=\#\bigl\{j\in I\colon j<i\text{ and }\sigma^{-1}(j)>\sigma^{-1}(i)\bigr\},\quad  \sigma\in\Sym(I),\ i\in I.
$$
The entries of $\sLtilde(\sigma)$ are a permutation of those in $\sL(\sigma)$, so $\sum_{i\in I}\sLtilde(\sigma)_i=\inv(\sigma)$.

\begin{figure}
\centering
\begin{subfigure}[t]{.49\textwidth}
\centering
{}\hspace*{1cm}
\begin{tikzpicture}[x=13pt,y=13pt]
  \draw[fill=blue!15!white] (7,0.5) rectangle (9.5,4);

  \foreach \p in {(1,6), (2,8), (3,7), (4,1), (5,9), (6,2), (7,4), (8,3), (9,5)}{
    \node at \p {\textbullet};
  }
  \node[right] at (10,1.5) {$\sL(\sigma)_i=1$};
  \draw[thick,->] (10,1.5) -- (8.5,1.5) node[midway,sloped,rotate=270] {};
  \draw[-] (0.5,0.5) -- (0.5,9.5) -- (9.5,9.5) -- (9.5,0.5) --cycle;
  \node[below] at (7,0.5) {$\vphantom{\sigma^{-1}(}i\vphantom{)}$};
  \node[right] at (9.5,4) {$\sigma(i)$};
\end{tikzpicture}
\caption{The Lehmer code.}
\label{sfig:L}
\end{subfigure}
\begin{subfigure}[t]{.49\textwidth}
\centering
{}\hspace*{1cm}
\begin{tikzpicture}[x=13pt,y=13pt]
  \draw[fill=blue!15!white] (3,0.5) rectangle (9.5,7);

  \foreach \p in {(1,6), (2,8), (3,7), (4,1), (5,9), (6,2), (7,4), (8,3), (9,5)}{
    \node at \p {\textbullet};
  }
  \node[right] at (10,1.5) {$\sLtilde(\sigma)_i=5$};
  \draw[thick,->] (10,1.5) -- (8.5,1.5) node[midway,sloped,rotate=270] {};
  \draw[-] (0.5,0.5) -- (0.5,9.5) -- (9.5,9.5) -- (9.5,0.5) --cycle;
  \node[below] at (3,0.5) {$\sigma^{-1}(i)$};
  \node[right] at (9.5,7) {$\vphantom{\sigma^{-1}(}i\vphantom{)}$};
\end{tikzpicture}
\caption{The insertion code.}
\label{sfig:Ltilde}
\end{subfigure}
\caption{The Lehmer code and the insertion code can be read from the scatter plot of a permutation. Visualization of (a) $\sL(\sigma)_7=1$ and (b) $\sLtilde(\sigma)_7=5$ for the permutation $\sigma=687192435$. The quantities are the numbers of dots in the shaded regions.}
\label{fig:scatter}
\end{figure}
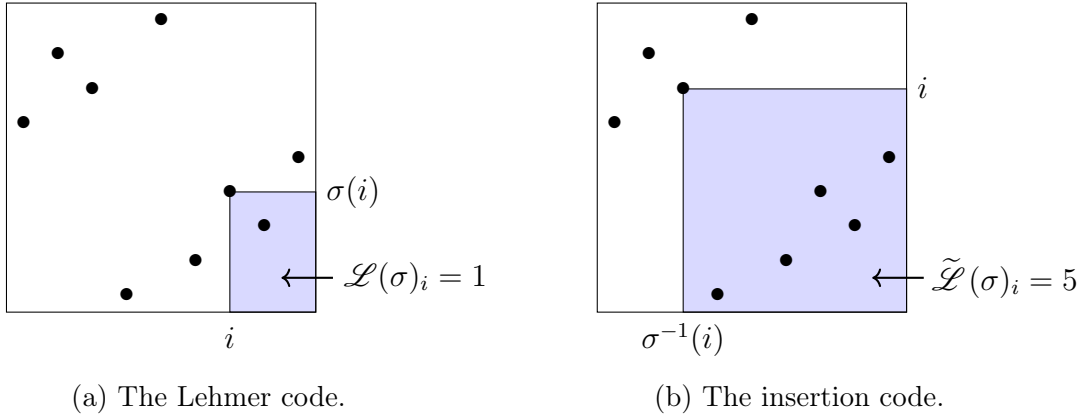

Clearly $0\leq \sL(\sigma)_i\leq \sup I-i$ and $0\leq \sLtilde(\sigma)_i\leq i-\inf I$ for all $i\in I$. Let
\begin{align*}
  \Omega_I&=\bigl\{\ell\in\bbb{0,\infty}^I\colon 0\leq \ell_i\leq \sup I-i,\ \forall i\in I\bigr\}\quad \text{and}\\
  \tilde\Omega_I&=\bigl\{\ell\in\bbb{0,\infty}^I\colon 0\leq \ell_i\leq i-\inf I,\ \forall i\in I\bigr\},
\end{align*}
so that $\sL\bigl(\Sym(I)\bigr)\subseteq \Omega_I$ and $\sLtilde\bigl(\Sym(I)\bigr)\subseteq \tilde\Omega_I$. When $I$ is finite, $\sL$ and $\sLtilde$ are bijections from $\Sym(I)$ to $\Omega_I$ and $\tilde \Omega_I$, respectively, with explicit inverse functions which we now describe.

For a finite interval $J$ of $\m Z$, let $\pi^-_J\in\Sym(\m Z)$ denote the permutation fixing $\m Z\setminus J$ and cyclically decrementing $J$, i.e.,
$$
  \pi_J^-(j)=
  \begin{cases}
    j-1,    & \min J<j\leq \max J\\
    \max J, & j=\min J\\
    j,      & j\in \m Z\setminus J.
  \end{cases}
$$
The permutation $\pi_J^-$ has a single cycle and it is of size $\# J$.

\begin{figure}[t]
    \centering
    \includegraphics[width=0.3\textwidth]{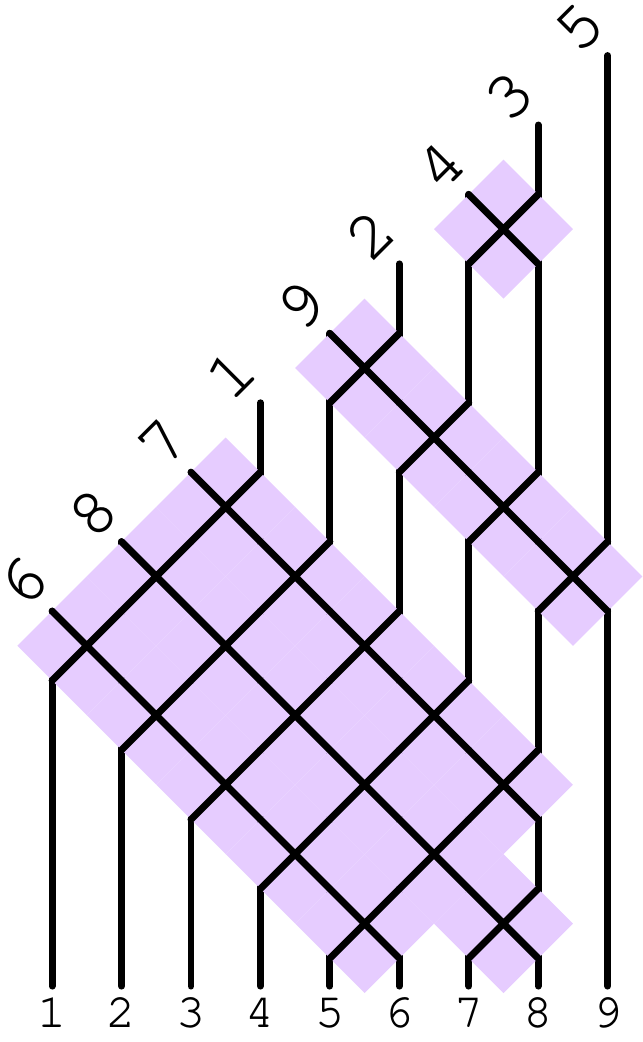}
    \caption{
      Depiction of $\sD(\ell)$ for the sequence $\ell=(5,6,5,0,4,0,1,0,0)$.
      Given $\ell$, draw crosses on NW-SE diagonals such that the $i^{th}$ such diagonal from the left contains $\ell_i$ crosses.
      Then follow the wires upwards to obtain the permutation $\sigma=\sD(\ell)$.
      In this case $\sigma=687192435$.
    }
    \label{fig:wire}
  \end{figure}

Let $\sD\colon\bbr{0,\infty}^I\to \Sym(\m Z)$ denote the map
\begin{equation}\label{eq:D}
  \sD(\ell)=\bigCircle_{i\in I}{}\pi^-_{\bbb{i,i+\ell_i}},\qquad \ell\in \bbr{0,\infty}^I.
\end{equation}
See Figure~\ref{fig:wire}. Also let $\sDtilde\colon\bbr{0,\infty}^I\to \Sym(\m Z)$ denote the map
\begin{equation}\label{eq:Dtilde}
  \sDtilde(\ell)=\bigCircle_{i\in I}{}\pi^-_{\bbb{i-\ell_i,i}},\qquad \ell\in \bbr{0,\infty}^I.
\end{equation}
\begin{lemma}\label{lem:LDinv}
  Suppose that $I$ is a finite interval of $\m Z$.
  Then $\sL$ is a bijection from $\Sym(I)$ to $\Omega_I$, with inverse given by the restriction of $\sD$ to $\Omega_I$.
  Similarly $\sLtilde$ is a bijection from $\Sym(I)$ to $\tilde\Omega_I$, with inverse given by the restriction of $\sDtilde$ to $\tilde \Omega_I$.
\end{lemma}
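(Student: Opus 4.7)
The plan is to induct on $|I|$, reducing each claim to a smaller interval by peeling off the leftmost position (for $\sL$) or largest value (for $\sLtilde$). Before the induction, note the easy containments $\sL(\Sym(I)) \subseteq \Omega_I$ and $\sLtilde(\Sym(I)) \subseteq \tilde\Omega_I$ (immediate from the definitions) and $\sD(\Omega_I), \sDtilde(\tilde\Omega_I) \subseteq \Sym(I)$ (since the bounds on each $\ell_i$ force every factor $\pi_{\bbb{\cdot,\cdot}}^-$ to have support inside $I$). A direct count gives $|\Omega_I| = |\tilde\Omega_I| = |I|! = |\Sym(I)|$, so it will suffice to prove $\sD \circ \sL = \mathrm{id}_{\Sym(I)}$ and $\sDtilde \circ \sLtilde = \mathrm{id}_{\Sym(I)}$.

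For the first, fix $\sigma \in \Sym(I)$, set $a = \inf I$, and put $\ell := \sL(\sigma)$. A direct count gives $\ell_a = \sigma(a) - a$, so $\pi_{\bbb{a,a+\ell_a}}^-(a) = \sigma(a)$. Splitting off the leftmost factor yields
$$
  \sD(\ell) \;=\; \pi_{\bbb{a,a+\ell_a}}^- \,\circ\, \bigCircle_{i \in I \setminus \{a\}} \pi_{\bbb{i,i+\ell_i}}^-,
$$
which reduces the task to showing that the second factor equals $\tilde\sigma := (\pi_{\bbb{a,a+\ell_a}}^-)^{-1} \circ \sigma$. Now $\tilde\sigma$ fixes $a$, so its restriction lies in $\Sym(I \setminus \{a\})$, and the key step is to verify that the Lehmer code of $\tilde\sigma|_{I \setminus \{a\}}$, computed within $\Sym(I \setminus \{a\})$, agrees with $\ell|_{I \setminus \{a\}}$. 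This follows because $(\pi_{\bbb{a,a+\ell_a}}^-)^{-1}$ acts order-preservingly on the range $I \setminus \{a+\ell_a\}$ of $\sigma|_{I \setminus \{a\}}$ (it shifts $\{a,\ldots,a+\ell_a-1\}$ up by one and fixes the rest), so the inversion counts that define the Lehmer-code entries on $I \setminus \{a\}$ are identical for $\sigma$ and $\tilde\sigma$. Applying the induction hypothesis to $I \setminus \{a\}$ then closes the step.

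The argument for $\sLtilde$ is symmetric, peeling off the largest \emph{value} $b := \sup I$ instead of the smallest position. One checks $\ell_b = b - \sigma^{-1}(b)$, splits
$$
  \sDtilde(\ell) \;=\; \Bigl(\bigCircle_{i \in I \setminus \{b\}} \pi_{\bbb{i-\ell_i,i}}^-\Bigr) \circ \pi_{\bbb{b-\ell_b,b}}^-,
$$
and verifies that $\hat\sigma := \sigma \circ (\pi_{\bbb{b-\ell_b,b}}^-)^{-1}$ fixes $b$ and has insertion code equal to $\ell|_{I \setminus \{b\}}$ on $I \setminus \{b\}$, via an analogous order-preservation check (now on positions rather than values). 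The main obstacle is precisely this order-preservation bookkeeping at each step: one must verify carefully that pre- or post-composing by the cyclic permutation's inverse preserves the relative order on the remaining values or positions, so that the restricted codes line up with the codes computed in the smaller interval. It is not conceptually deep, but must be set up with care; once it is in hand, the induction runs cleanly.
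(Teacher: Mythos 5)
Your argument is correct. The paper in fact omits the proof of this lemma as ``straightforward,'' so there is nothing to compare against; your induction — peeling off the leftmost position for $\sL$ (using $\sL(\sigma)_{\inf I}=\sigma(\inf I)-\inf I$) and the largest value for $\sLtilde$ (using $\sLtilde(\sigma)_{\sup I}=\sup I-\sigma^{-1}(\sup I)$), with the order-preservation check on the complement of the removed value/position, plus the counting argument $\#\Omega_I=\#\tilde\Omega_I=\#\Sym(I)=(\#I)!$ that reduces everything to the one-sided identities $\sD\circ\sL=\mathrm{id}$ and $\sDtilde\circ\sLtilde=\mathrm{id}$ — is a complete and standard way to fill this in (the trivial base case $\#I\le 1$ being the only unstated detail).
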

The proof, which is straightforward, is omitted. Similar results are well known and appear for example in
\cite{MR3334280,gnedin2012two,stanley1997enumerative}.
\begin{lemma}\label{lem:geomMal}
For $1\leq i\leq n$ let $G_i$ be an $(n-i)$-truncated $t$-geometric random variable,
and suppose that $G_1,\ldots,G_{n}$ are independent.
Then the law of $\sD(G_1,\ldots,G_{n})$ is the Mallows measure on $S_n$ with parameter $t$.
\end{lemma}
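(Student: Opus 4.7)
The proof is essentially a change-of-variables argument using the bijection from Lemma~\ref{lem:LDinv}. The plan is as follows.

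First I would take $I=\{1,\ldots,n\}$ and note that for this interval $\Omega_I=\{\ell\in\bbb{0,\infty}^I:0\le \ell_i\le n-i\}$. Since $G_i$ takes values in $\{0,1,\ldots,n-i\}$ by definition of an $(n-i)$-truncated $t$-geometric, the random vector $(G_1,\ldots,G_n)$ is supported on $\Omega_I$. By Lemma~\ref{lem:LDinv}, the restriction of $\sD$ to $\Omega_I$ is a bijection onto $\Sym(I)=S_n$, with inverse $\sL$. Consequently, for every $\sigma\in S_n$ there is a unique $\ell\in\Omega_I$ with $\sD(\ell)=\sigma$, namely $\ell=\sL(\sigma)$.

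Second, by independence and the definition of the $(n-i)$-truncated $t$-geometric distribution,
\[
  \m P\bigl((G_1,\ldots,G_n)=\ell\bigr)=\prod_{i=1}^n\frac{t^{\ell_i}}{1+t+\cdots+t^{n-i}}=\frac{t^{\sum_i \ell_i}}{\prod_{i=1}^n[n-i+1]_t}=\frac{t^{\sum_i \ell_i}}{[n]^!_t},
\]
for any $\ell\in\Omega_I$.

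Finally, I would observe that when $\ell=\sL(\sigma)$ we have $\sum_{i\in I}\ell_i=\sum_{i\in I}\sL(\sigma)_i=\inv(\sigma)$, as recorded just after the definition of the Lehmer code. Combining with the change of variables yields
\[
  \m P\bigl(\sD(G_1,\ldots,G_n)=\sigma\bigr)=\m P\bigl((G_1,\ldots,G_n)=\sL(\sigma)\bigr)=\frac{t^{\inv(\sigma)}}{[n]^!_t},
\]
which is exactly the Mallows measure on $S_n$ with parameter $t$.

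There is no real obstacle here: the entire content sits in Lemma~\ref{lem:LDinv} and the identity $\inv(\sigma)=\sum_i\sL(\sigma)_i$. The only thing to watch is bookkeeping of the indices, in particular that the denominator $1+t+\cdots+t^{n-i}$ for $G_i$ is precisely $[n-i+1]_t$, so that the product over $i=1,\ldots,n$ telescopes to $[n]^!_t$. As a byproduct, the fact that the probabilities sum to one gives an independent proof of the $t$-analogue identity \eqref{eq:tMalId}.
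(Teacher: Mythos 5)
Your proof is correct and follows essentially the same route as the paper's: apply the bijection of Lemma~\ref{lem:LDinv} to change variables, use independence to factor the probability, and invoke $\sum_i \sL(\sigma)_i=\inv(\sigma)$, with your version merely spelling out the normalization $\prod_{i=1}^n[n-i+1]_t=[n]^!_t$ explicitly. The byproduct you note (that summing to one reproves \eqref{eq:tMalId}) is exactly the remark the paper makes just after that identity.
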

\begin{proof} It follows from Lemma~\ref{lem:LDinv} that for all $\sigma\in S_n$, $$\m P\bigl(\sD(G_1,\ldots,G_{n})=\sigma\bigr)=
\m P\bigl((G_1,\ldots,G_{n})=\sL(\sigma)\bigr).$$
By the independence of $G_1,\ldots,G_{n}$, the right side equals
$$\prod_{i=1}^{n}\m P\bigl(G_i=\sL(\sigma)_{i}\bigr).$$
The result now follows since $\sum_{i=1}^n\sL(\sigma)_i=\inv(\sigma)$.
\end{proof}

  Having discussed permutations and their encodings, we now relate these objects to words.
  Fix a (possibly infinite) interval $I$ of $\m Z$ and an integer $q\geq 1$.
  A \textbf{word} $x=(x_i)_{i\in I}$ indexed by $I$ is a function from $I$ to $\bbb{1,q}$, and its entries are referred to as \textbf{characters}.
  It is \textbf{proper} if $x_i\not=x_{i+1}$ whenever $i,i+1\in I$. A \textbf{$\bm{q}$-coloring} is defined to be a proper word.
  The \textbf{length} of a word, denoted by $|x|$, is the cardinality of its index set $I$. We denote the empty word by $\emptyset$.
  The \textbf{concatenation} of a word $x$ indexed by $\bbb{a,b}$ with a word $y$ indexed by $\bbl{b,c}$ is the word $xy$ indexed by $\bbb{a,c}$ whose restrictions to $\bbb{a,b}$ and $\bbl{b,c}$ are $x$ and $y$, respectively.
  Similar notation is used for concatenations of words with individual characters.
  Given a word $x$ indexed by $I$ and a set $A\subseteq I$ of size $m$, the \textbf{subword} $(x_i\colon i\in A)$ is defined to be the word $x_{i_1}x_{i_2}\cdots x_{i_m}$, where $i_k$ is the $k^{th}$ smallest element of $A$.
  In other words, it is the subsequence of $x$ indexed by $A$.

  Following \cite{HL}, we say that a permutation $\sigma$ of $I$ is a \textbf{proper building} of a word $x$ if for each $t\in I$ the subword
  $$
    x^\sigma(t):=\bigl(x_i\colon \sigma(i)\leq t\bigr)
  $$
  of $x$ is proper.
  We write $\sigma\vdash x$ if this occurs.
  Note that $x$ is proper if and only if it has some proper building, in which case for instance the identity permutation is a proper building.

  The following picture will be very useful. We regard $\sigma(i)$ as the arrival time of $i$. Then $x^\sigma(t)$ is the subword of $x$ that has arrived by time $t$. At time step $t$, the integer $\sigma^{-1}(t)$ arrives, and the character $x_{\sigma^{-1}(t)}$ is inserted into $x^\sigma(t-1)$ (or the empty word, if $t=\min I$). The insertion code $\sLtilde(\sigma)_t$ has a natural interpretation in terms of arrivals, from which its name derives: it is the distance from the right at which $x_{\sigma^{-1}(t)}$ is inserted in $x^\sigma(t-1)$. More precisely, for all $t>\min I$ there are subwords $u$ and $v$ of $x$ such that
  $$
    x^\sigma(t-1)=uv,\qquad x^\sigma(t)=ux_{\sigma^{-1}(t)}v,\quad\text{and}\quad |v|=\sLtilde(\sigma)_t.
  $$
  For example if $\sigma=25431$ and $t=4$, then
  $$
    x^\sigma(t-1)=x_1x_4x_5,\qquad x_{\sigma^{-1}(t)}=x_3,\qquad x^\sigma(t)=x_1x_3x_4x_5,
  $$
  and $\sLtilde(\sigma)_t=|x_4x_5|=2$.

  The condition $\sigma\vdash x$ can also be expressed in the language of graphs.
  Let $I$ be the index set of $x$. Then for a graph $G$ with vertex set $I$, we say that $x$ is a \textbf{proper coloring} of $G$ if $x_i\not=x_j$ whenever $i$ and $j$ are adjacent in $G$.
  Given a permutation $\sigma$ of $I$, we define its \textbf{constraint graph} $\Gamma_\sigma$ to have vertex set $I$ and an undirected edge between $i$ and $j$, where $i<j$, if and only if $\sigma(i)<\sigma(k)>\sigma(j)\text{ for all } k\in\bbo{i,j}.$
  In other words, two integers are adjacent in $\Gamma_\sigma$ if and only if they both arrive prior to any integer between them.
  It is immediate from the definitions that $\sigma\vdash x$ if and only if $x$ is a proper coloring of $\Gamma_\sigma$.
  See Figure~\ref{fig:constraint} for an example of a constraint graph.

\begin{figure}
\centering
\includegraphics[width=.6\textwidth]{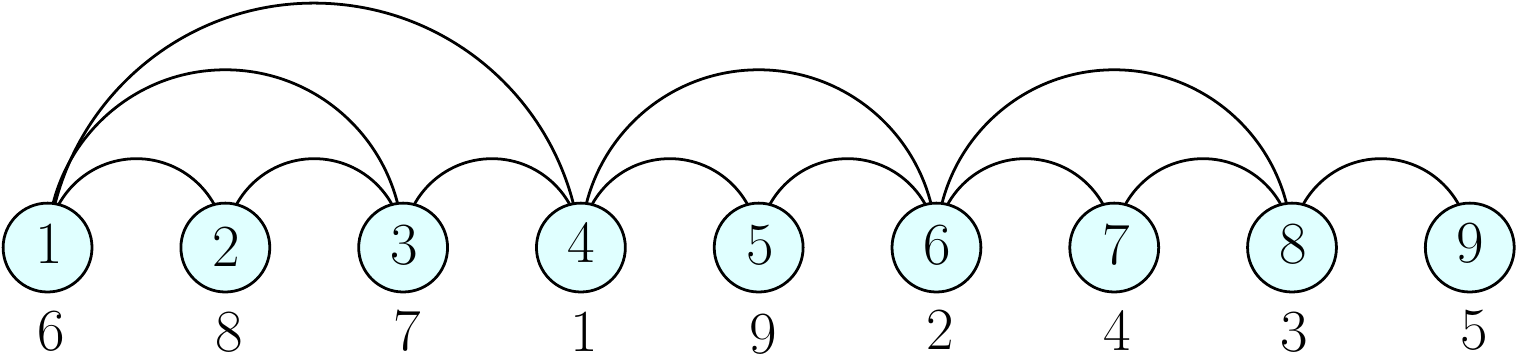}
\caption{Constraint graph of the permutation $\sigma=687192435$.
Vertices of the constraint graph are labeled $1,\ldots,9$.
The image under $\sigma$ is written below each vertex, and these are interpreted as arrival times.
An arc is drawn between two vertices if they arrive before every vertex between them.
}
\label{fig:constraint}
\end{figure}

  Next we consider decompositions of graphs that arise naturally in the context of buildings.
  Let $G$ be a graph whose vertex set $I$ is a (possibly infinite) interval of $\m Z$.
  An integer $i\in I$ is a \textbf{bubble endpoint} of $G$ if there do not exist $j$ and $k$ with $j<i<k$ such that $j$ and $k$ are adjacent in $G$.
  A \textbf{bubble} of $G$ is a subgraph induced by a finite interval of $\m Z$ whose endpoints are consecutive bubble endpoints, and $\bub(G)$ denotes the set of all bubbles.
  Note that every endpoint of $I$ is a bubble endpoint.
  \begin{lemma}\label{lem:constraintGraphDecomp}
    Let $G$ be a graph whose vertex set $I$ is a (possibly infinite) interval  of $\m Z$.
    Then $G=\bigcup \bub(G)$ iff the infimum and supremum of the set of bubble endpoints agree with those of $I$.
  \end{lemma}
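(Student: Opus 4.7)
The plan is to compare the vertex and edge sets on the two sides of $G=\bigcup\bub(G)$ via the set of bubble endpoints, which I will denote $B$. Recall that $B\subseteq I$, and that every endpoint of $I$ (when it exists) automatically lies in $B$ by the remark just above the statement. The forward direction ($\Rightarrow$) is the easier one: any bubble of $G$ lives on a finite integer subinterval $[a,b]\cap I$ with $a,b\in B$ consecutive, so its vertex set is contained in $[\inf B,\sup B]\cap I$. Hence if $G=\bigcup\bub(G)$ then $I\subseteq[\inf B,\sup B]$, forcing $\inf B\leq\inf I$ and $\sup B\geq\sup I$; the reverse inequalities are immediate from $B\subseteq I$.

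For the reverse direction ($\Leftarrow$), I would enumerate $B$ in increasing order and, given a vertex $v\in I$, use the hypothesis together with the discreteness of $\mathbb Z$ to produce $a:=\max\{e\in B:e\leq v\}$ and $b:=\min\{e\in B:e\geq v\}$ as honest integers. When $a<b$ they are consecutive in $B$ by the extremal choice, so the bubble induced on $[a,b]\cap I$ contains $v$; when $a=b=v\in B$ I would instead pair $v$ with its immediate predecessor or successor in $B$ (at least one of which exists as long as $|I|\geq 2$) to obtain a bubble through $v$. Edge coverage is then automatic: if $\{j,k\}$ is an edge of $G$ with $j<k$, then the definition of a bubble endpoint forces $B\cap(j,k)=\emptyset$, so the largest element of $B$ that is $\leq j$ and the smallest that is $\geq k$ remain consecutive in $B$ and together form a bubble containing the whole edge.

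The only technical obstacle is bookkeeping: verifying that the integers $a$ and $b$ really exist as elements of $B$ splits into cases according to whether $\inf I$ and $\sup I$ are attained in $I$. When $\inf I=-\infty$, for instance, the hypothesis $\inf B=\inf I$ is exactly the statement that $B$ is unbounded below, and combined with $B\subseteq\mathbb Z$ this guarantees that $B\cap(-\infty,v]$ has a maximum for every $v\in I$; the bounded-below and upper analogues are similar. The only genuinely degenerate edge case is $|I|=1$, in which $B=I$ is a singleton and $\bub(G)=\emptyset$ despite the infimum/supremum condition holding vacuously; this case does not arise in the applications of the lemma and can be excluded by fiat or absorbed into a convention on trivial bubbles.
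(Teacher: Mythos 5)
Your proof is correct and is exactly the direct verification the paper leaves to the reader (it states only that ``the proof is immediate''): the forward direction from containment of bubbles in $[\inf B,\sup B]$, and the converse by locating, for each vertex or edge, the nearest bubble endpoints on either side, whose existence is precisely what the hypothesis plus discreteness of $\mathbb Z$ guarantees. Your observation about the degenerate case $\#I=1$ (where $\bub(G)=\emptyset$ yet the endpoint condition holds, so the statement fails literally) is a fair caveat that the paper glosses over -- its remark that the condition holds automatically for finite $I$ implicitly assumes $\#I\geq 2$ -- and your resolution by convention is appropriate.
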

  The proof is immediate. Note that the condition on the set of bubble endpoints holds automatically if $I$ is finite.

  Next we consider bubbles of constraint graphs of permutations. An integer $i$ is a \textbf{record} of a permutation $\sigma$ if it is either the maximum or minimum of the set
  $$
    \bigl\{\sigma(j)\colon j\leq \sigma^{-1}(i)\bigr\}.
  $$
  Records are a well-studied permutation statistic, both combinatorially \cite{bona2012combinatorics} and probabilistically  \cite{kortchemski2009asymptotic,glick1978breaking,chern2000distribution,auger2016analysis}. A \textbf{founder} of a permutation $\sigma$ is defined to be a record of $\sigma^{-1}$. Equivalently, $i$ is a founder of $\sigma$ iff there do not exist $j$ and $k$ with $j<i<k$ and $\sigma(j)<\sigma(i)>\sigma(k)$. Phrased in terms of the arrival times picture, $i$ is a founder if and only if either:
  \begin{itemize}
    \item it arrives prior to all smaller elements of $I$, or
    \item it arrives prior to all larger elements of $I$.
  \end{itemize}
  We write $\c F(\sigma)$ for the set of founders of $\sigma$.
  \begin{lemma}\label{lem:fRecL}
    Let $\sigma$ be a permutation of an interval $I\subseteq \m Z$ (which may be finite or infinite). Then the set of bubble endpoints of the constraint graph $\Gamma_\sigma$ is $\c F(\sigma)$. Furthermore if $I=\bbb{0,n}$, then $\#\c F(\sigma)=\#\bigl\{i\in I\colon \sLtilde(\sigma)_i\in\{0,i\}\bigr\}$.
  \end{lemma}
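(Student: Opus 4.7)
My approach is to convert both parts of the lemma into the ``arrival-time'' picture introduced earlier in Section~\ref{sec:background}, where $\sigma(i)$ is read as the time at which $i$ arrives. In these terms the three objects involved reduce to the following: $i<j$ are adjacent in $\Gamma_\sigma$ iff both $i$ and $j$ arrive strictly earlier than every integer strictly between them; $i$ is a founder iff no $j<i$ has arrived before $i$, or no $k>i$ has arrived before $i$; and, writing $v=\sigma^{-1}(t)$, the quantity $\sLtilde(\sigma)_t$ counts the integers greater than $v$ that have arrived strictly before time $t$. Once these reformulations are in hand, the lemma becomes almost immediate.

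For the first claim, I would treat each direction separately. The direction ``not a bubble endpoint $\Rightarrow$ not a founder'' is a one-line check: if some edge $\{j,k\}$ of $\Gamma_\sigma$ has $j<i<k$, then the adjacency condition applied at the intermediate index $l=i$ yields $\sigma(j),\sigma(k)<\sigma(i)$, exhibiting $i$ as a non-founder. For the converse, I would explicitly construct an edge over $i$. Given any witness $j_0<i<k_0$ with $\sigma(j_0),\sigma(k_0)<\sigma(i)$, set
\[
  j^*:=\max\{l\in I:l<i,\ \sigma(l)<\sigma(i)\},\qquad k^*:=\min\{l\in I:l>i,\ \sigma(l)<\sigma(i)\}.
\]
Both exist (the defining sets are non-empty because of $j_0,k_0$ and are bounded by $i$), even when $I$ is infinite. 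By the extremality of $j^*$ and $k^*$, any $l\in\bbo{j^*,k^*}$ with $l\ne i$ satisfies $\sigma(l)\ge\sigma(i)$, and at $l=i$ we have $\sigma(l)=\sigma(i)$; in either case $\sigma(l)>\max\{\sigma(j^*),\sigma(k^*)\}$. Thus $j^*$ and $k^*$ are adjacent in $\Gamma_\sigma$ and span over $i$, so $i$ is not a bubble endpoint.

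For the second claim, assume $I=\bbb{0,n}$ and fix $t\in I$ with $v=\sigma^{-1}(t)$. The arrival-time interpretation of $\sLtilde(\sigma)_t$ makes it clear that $\sLtilde(\sigma)_t=0$ iff no previously-arrived integer exceeds $v$, i.e.\ $v$ is the rightmost arriver so far, while $\sLtilde(\sigma)_t=t$ iff all $t$ previously-arrived integers exceed $v$, i.e.\ $v$ is the leftmost arriver so far. By the founder reformulation above, these two conditions together describe precisely the event $v\in\c F(\sigma)$. Hence the bijection $\sigma^{-1}\colon I\to I$ restricts to a bijection between $\{t\in I:\sLtilde(\sigma)_t\in\{0,t\}\}$ and $\c F(\sigma)$, giving the claimed equality of cardinalities.

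The main (mild) obstacle is the converse direction of the first claim, where one must exhibit an actual edge of $\Gamma_\sigma$ rather than argue by contradiction; the ``squeeze'' defining $j^*$ and $k^*$ is the only nontrivial construction in the proof, and verifying its properties is a short case-check on whether the intermediate index equals $i$ or not. All other steps are bookkeeping in the arrival-time picture.
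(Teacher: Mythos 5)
Your proposal is correct and follows essentially the same route as the paper: the easy direction is the same one-line observation, the converse uses exactly the paper's device of taking the maximal $j<i$ and minimal $k>i$ with smaller arrival time to produce an edge over $i$, and the counting claim is the paper's identification $\c F(\sigma)=\sigma\bigl(\{i\colon \sLtilde(\sigma)_i\in\{0,i\}\}\bigr)$ phrased via the arrival-time reading of the insertion code. No gaps; your write-up is just a more detailed version of the paper's argument.
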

  In particular, this implies that $\#\!\bub(\Gamma_\sigma)$ is one less than the number of founders of $\sigma$ (which could be infinite for a permutation of an infinite interval). Notation involving Lehmer codes often simplifies when working on the interval $\bbb{0,n}$. Note that this interval has $n+1$ elements.

  \begin{proof}[Proof of Lemma~\ref{lem:fRecL}]
    If $i$ is not a bubble endpoint of $\Gamma_\sigma$, then there must exist $j<i<k$ such that $j$ and $k$ arrive prior to all elements of $\bbo{j,k}$, and in particular $i$. Thus $i$ is not a founder. Conversely if $i$ is not a founder, then there exist $j<i<k$ with $\sigma(j)<\sigma(i)>\sigma(k)$. Now choose $j$ maximal and $k$ minimal satisfying these conditions to obtain an edge of $\Gamma_\sigma$ passing over $i$, showing that $i$ is not a bubble endpoint. This establishes the first claim.

    For the second claim, observe that
    $$\c F(\sigma)=\bigl\{i\in I\colon\sLtilde(\sigma)_{\sigma(i)}\in\{0,\sigma(i)\}\bigr\}=\sigma\bigl(\bigl\{i\in I\colon\sLtilde(\sigma)_{i}\in\{0,i\}\bigr\}\bigr).$$
    Thus $\#\c F(\sigma)=\#\bigl\{i\in I\colon\sLtilde(\sigma)_{i}\in\{0,i\}\bigr\}$.
  \end{proof}

  Let $\Col_q(G)$ be the number of proper $q$-colorings of a graph $G$.
  \begin{lemma}\label{lem:colConst}
    For any $q\geq 3$, any finite interval $I$ of $\m Z$, and any permutation $\sigma$ of $I$,
    \begin{equation*}\label{eqn:wordCount}
      \Col_q\bigl(\graph_\sigma\bigr)= q(q-2)^{\# I-1}\Bigl(\frac{q-1}{q-2}\Bigr)^{\#\!\bub(\Gamma_\sigma)}.
    \end{equation*}
  \end{lemma}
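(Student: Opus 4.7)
My approach is to show that $\Gamma_\sigma$ is chordal with a perfect elimination ordering arising from the arrival order, and to read off the chromatic polynomial from the degree sequence in that ordering. Index $I = \{v_1,\ldots,v_n\}$ so that $v_j$ is the $(n-j+1)$-th element to arrive (i.e., $v_1$ is the last-arrived), and set $V_i = \{v_i, v_{i+1}, \ldots, v_n\}$, the set of the $n-i+1$ earliest arrivals.

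First I would check that the induced subgraph $\Gamma_\sigma[V_i]$ coincides with the constraint graph $\Gamma_{\sigma|_{V_i}}$ of the restricted permutation. The point is that for $u<v$ in $V_i$ and any $k \in I \setminus V_i$ with $u<k<v$, the value $\sigma(k)$ lies among the top $i-1$ values and so automatically exceeds both $\sigma(u)$ and $\sigma(v)$; thus external $k$'s impose no extra constraint when testing for the arc $(u,v)$, and both graphs have the same edges on $V_i$.

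Next I would show that $v_i$ is simplicial in $\Gamma_\sigma[V_i]$: its only possible neighbors are its immediate left- and right-$V_i$-neighbors, and if both exist they are joined by an arc because the unique element of $V_i$ strictly between them is $v_i$ itself, whose $\sigma$-value is the maximum on $V_i$. Consequently $(v_1,\ldots,v_n)$ is a perfect elimination ordering for $\Gamma_\sigma$, so
$$\Col_q(\Gamma_\sigma) = \prod_{i=1}^n (q - d_i),$$
where $d_i \in \{0,1,2\}$ is the number of immediate $V_i$-neighbors of $v_i$.

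It remains to count the three degree classes. Exactly one arrival has $d_i = 0$, namely the first-arrived $v_n$. A later arrival $v_i$ has $d_i = 1$ precisely when it is a new minimum or new maximum among the arrivals to date, i.e.\ when it is a founder of $\sigma$ other than the first-arrived. By Lemma~\ref{lem:fRecL} the number of founders equals $\#\bub(\Gamma_\sigma)+1$, so there are $\#\bub(\Gamma_\sigma)$ arrivals with $d_i=1$ and $\# I - 1 - \#\bub(\Gamma_\sigma)$ arrivals with $d_i=2$. Substituting gives
$$\Col_q(\Gamma_\sigma) = q\,(q-1)^{\#\bub(\Gamma_\sigma)}(q-2)^{\# I - 1 - \#\bub(\Gamma_\sigma)},$$
which rearranges to the claimed formula. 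The main content lies in the two structural claims above (equality of induced subgraphs with constraint graphs of restrictions, and simpliciality of the last-arrived vertex); once these are in hand, the count of $d_i$-values follows directly from the arrival interpretation of founders.
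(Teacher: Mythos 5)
Your proof is correct and is essentially the paper's argument: the paper likewise counts colorings by coloring vertices in arrival order, observing that each new arrival is adjacent (among the previously arrived vertices) only to its nearest arrived left/right neighbors, which are themselves adjacent, so it contributes a factor of $q-1$ exactly at founders (records of $\sigma^{-1}$) and $q-2$ otherwise, with the founder count converted to $\#\!\bub(\Gamma_\sigma)+1$ via Lemma~\ref{lem:fRecL}. Your perfect-elimination-ordering packaging simply makes explicit the simpliciality and the induced-subgraph identification that the paper's induction uses implicitly.
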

  \begin{proof}
    Without loss of generality assume that $I=\bbb{0,n}$. For each $i\in\bbb{0,n}$, let $\graph_\sigma^{i}$ denote the subgraph of $\graph_{\sigma}$ induced by the set of the first $i$ vertices to arrive, $\sigma^{-1}(\bbb{0,i})$. The graph $\graph_\sigma^{1}$ can be colored in $q$ ways. Suppose $x$ is a proper coloring of $\graph_{\sigma}^{i}$. If $i+1$ is a record of $\sigma^{-1}$, then $i+1$ has degree $1$ in $\graph_{\sigma}^{i+1}$, and there are exactly $q-1$ colorings of $\graph_\sigma^{i+1}$ extending $x$. Otherwise, $i+1$ has degree $2$ in $\graph_{\sigma}^{i+1}$ and, since both of the neighbors of $i+1$ in $\graph_\sigma^{i}$ have different colors in $x$, there are exactly $q-2$ proper colorings of $\graph_\sigma^{i+1}$ extending $x$. Thus by Lemma~\ref{lem:fRecL},
    \begin{equation*}\label{eq:colorCases}
      \Col_q\bigl(\graph_\sigma^{i+1}\bigr) =
        \begin{cases}
          (q-1)\Col_q\bigl(\graph_\sigma^{i}\bigr),& \text{if }i+1 \text{ is a record of }\sigma^{-1}\\
          (q-2)\Col_q\bigl(\graph_\sigma^{i}\bigr),&  \text{otherwise.}
        \end{cases}
    \end{equation*}
    Since the records of $\sigma^{-1}$ are the bubble endpoints of $\bub(\Gamma_\sigma)$, the lemma follows.
  \end{proof}

  A key element of the proof of our main theorem is a joint probability measure on colorings and permutations of a finite interval.
  The marginal law of the coloring will provide the finite-dimensional distributions for our coloring of $\m Z$.
  The marginal law of the permutation will belong to the following family of permutation measures.
  Let $I\subset \m Z$ be a finite interval and let $u$ and $t$ be non-negative real parameters.
  The \textbf{bubble-biased Mallows measure} with parameters $t$ and $u$ is the probability measure $\BMal=\BMal_{t,u}=\BMal_{t,u}^I$ on $\Sym(I)$ given by
  \begin{equation}\label{eq:bmalDef}
    \BMal(\{\sigma\})=
      \frac{u^{\#\!\bub(\Gamma_\sigma)}t^{\inv(\sigma)}}
      {\sum_{\tau\in\Sym(I)}u^{\#\!\bub(\Gamma_\tau)}t^{\inv(\tau)}},\qquad \sigma\in\Sym(I).
  \end{equation}
  Since $\#\!\bub(\Gamma_\sigma)$ is one less than the number of records of $\sigma^{-1}$, we also have that
  $$
    \BMal(\{\sigma\})=
      \frac{u^{\#\{\text{records of }\sigma^{-1}\}}t^{\inv(\sigma)}}
      {\sum_{\tau\in\Sym(I)}u^{\#\{\text{records of }\tau^{-1}\}}t^{\inv(\tau)}},\qquad \sigma\in\Sym(I).
  $$
  The key property of $\BMal$ is that, for $q\geq 3$ and $u=\frac{q-1}{q-2}$, we have that
  \begin{equation}\label{eq:bMalCol}
    \BMal_{t,u}(\{\sigma\})=\frac{\Col_q(\Gamma_\sigma)t^{\inv(\sigma)}}{\sum_{\tau\in\Sym(I)}\Col_q(\Gamma_\tau)t^{\inv(\tau)}},\qquad \sigma\in\Sym(I),
  \end{equation}
  which follows by combining Lemma~\ref{lem:colConst} with \eqref{eq:bmalDef}.
  That is, $\BMal$ is the law of a Mallows random permutation biased by the number of proper $q$-colorings of its constraint graph. We extend the definition of the Mallows measure from permutations of $\bbb{1,n}$ to permutations of an arbitrary finite interval $I$ by declaring it to be the special case $u=1$ of $\BMal_{t,u}$. We remark that, even though $\Mal_t(\sigma)=\Mal_t(\sigma^{-1})$ for all $\sigma$, the quantities $\BMal_{t,u}(\sigma)$ and $\BMal_{t,u}(\sigma^{-1})$ differ in general when $u\not=1$. This fact adds significant complications to our proof of the main theorem.

  \begin{lemma}\label{lem:LehmerBub}
    For all $i\in\bbb{0,n}$, let $G_i$ be a $u$-end-weighted, $i$-truncated, $t$-geometric random variable, and suppose that $G_0,\ldots,G_{n}$ are independent.
    Then the law of $\sDtilde(G_0,\ldots,G_{n})$ is $\BMal_{t,u}^{\bbb{0,n}}$.
  \end{lemma}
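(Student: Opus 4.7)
The plan is to combine the bijection $\sLtilde\colon\Sym(\bbb{0,n})\to\tilde\Omega_{\bbb{0,n}}$ with inverse $\sDtilde$ from Lemma~\ref{lem:LDinv} with the independence of the $G_i$'s, and then read off the exponents of $u$ and $t$ using Lemma~\ref{lem:fRecL}. Fix $\sigma\in\Sym(\bbb{0,n})$ and set $\ell=\sLtilde(\sigma)\in\tilde\Omega_{\bbb{0,n}}$; note that $\ell$ satisfies $0\le \ell_i\le i$ by construction, which matches the support of $G_i$. By Lemma~\ref{lem:LDinv}, the event $\{\sDtilde(G_0,\ldots,G_n)=\sigma\}$ is identical to $\{(G_0,\ldots,G_n)=\ell\}$, so we only need to compute the probability of the latter.

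By independence and the definition of the $u$-end-weighted, $i$-truncated, $t$-geometric distribution,
\[
\P\bigl((G_0,\ldots,G_n)=\ell\bigr)=\prod_{i=0}^n \frac{u^{\mathbbm{1}[\ell_i\in\{0,i\}]}\,t^{\ell_i}}{Z_i},
\]
where $Z_i$ is the relevant normalization and is independent of $\sigma$. The exponent of $t$ in the numerator equals $\sum_{i=0}^n \ell_i = \sum_{i=0}^n \sLtilde(\sigma)_i = \inv(\sigma)$ (using that the insertion code entries are a permutation of the Lehmer code entries and therefore have the same sum). The exponent of $u$ equals $\#\bigl\{i\in\bbb{0,n}\colon \sLtilde(\sigma)_i\in\{0,i\}\bigr\}$, which by Lemma~\ref{lem:fRecL} equals $\#\c F(\sigma)$, and this in turn equals $\#\!\bub(\Gamma_\sigma)+1$.

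Putting this together,
\[
\P\bigl(\sDtilde(G_0,\ldots,G_n)=\sigma\bigr)=\frac{u^{\#\!\bub(\Gamma_\sigma)+1}\,t^{\inv(\sigma)}}{\prod_{i=0}^n Z_i}.
\]
Since the denominator and the extra factor of $u$ do not depend on $\sigma$, the right-hand side is proportional to $u^{\#\!\bub(\Gamma_\sigma)}t^{\inv(\sigma)}$, which is exactly the weight defining $\BMal_{t,u}^{\bbb{0,n}}$ in \eqref{eq:bmalDef}. Both laws are probability measures on the same finite set, so they must coincide; as a by-product one obtains the identity $\sum_{\sigma}u^{\#\!\bub(\Gamma_\sigma)}t^{\inv(\sigma)}=u^{-1}\prod_{i=0}^n Z_i$ for the partition function.

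There is no serious obstacle here: the proof is essentially bookkeeping, provided the correspondence between the support constraints $0\le \ell_i\le i$ of $\sLtilde$ and the support of $G_i$ is carefully matched, and provided Lemma~\ref{lem:fRecL} is invoked to translate the indicator $\mathbbm{1}[\ell_i\in\{0,i\}]$ into the bubble count. The only mild subtlety worth flagging is the off-by-one between founders and bubbles (the former exceeds the latter by one), which accounts for the single extra factor of $u$ that is absorbed into the normalization.
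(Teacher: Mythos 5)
Your proof is correct and follows essentially the same route as the paper's: apply Lemma~\ref{lem:LDinv} to convert the event into a statement about the insertion code, use independence to factorize, read off the $t$-exponent as $\inv(\sigma)$ and the $u$-exponent as $\#\c F(\sigma)=\#\!\bub(\Gamma_\sigma)+1$ via Lemma~\ref{lem:fRecL}, and absorb the $\sigma$-independent normalization. The only cosmetic difference is that you conclude by a proportionality argument between two probability measures rather than writing out the explicit partition function, which is an equivalent final step.
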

  \begin{proof}
    It follows from Lemma~\ref{lem:LDinv} that for all $\sigma\in \Sym(\bbb{0,n})$,
    $$
      \m P\bigl(\sDtilde(G_0,\ldots,G_{n})=\sigma\bigr)=\m P\bigl((G_0,\ldots,G_{n})=\sLtilde(\sigma)\bigr).
    $$
    By the independence of $G_0,\ldots,G_{n}$, the right side equals
    $$
      \prod_{i=0}^{n}\m P\bigl(G_i=\sLtilde(\sigma)_{i}\bigr)=\prod_{i=0}^{n}\frac{u^{\mathbbm{1}[\sLtilde(\sigma)_i\in\{0,i\}]}t^{\sLtilde(\sigma)_i}}{u+t+\cdots+ut^{i-1}+ut^i}.
    $$
    The exponent of $u$ in this product is $\#\c F(\sigma)$ by Lemma~\ref{lem:fRecL}, and the exponent of $t$ is $\inv(\sigma)$. Thus
    $$
      \m P\bigl(\sDtilde(G_0,\ldots,G_{n})=\sigma\bigr)=\frac{u^{\#\c F(\sigma)}t^{\inv(\sigma)}}{\prod_{i=0}^n (u+t+\cdots+ut^{i-1}+ut^i)}.
    $$
    The result now follows since $\#\c F(\sigma)=\#\!\bub(\Gamma_\sigma)+1$.
  \end{proof}

%%%%%%%%%%%%%%%%%%%%%%%%%%%
%%%%%%%%%%%%%%%%%%%%%%%%%%%
%%                       %%
%%       Section 3       %%
%%       Finite dep.     %%
%%                       %%
%%%%%%%%%%%%%%%%%%%%%%%%%%%
%%%%%%%%%%%%%%%%%%%%%%%%%%%

\section{Finite dependence}\label{sec:finiteMallowsColoring}

The purpose of this section is to construct random colorings of $\m Z$ that are finitely dependent by starting on finite intervals and appealing to Kolmogorov extension.
\begin{prop}\label{prop:sec3main}
  Fix $t\in [0,1]$ and $q\geq 3$. Let $u=\frac{q-1}{q-2}$. Then there is a random $q$-coloring of $\m Z$ such that for every finite interval $I$, its restriction to $I$ has the law of a uniform $q$-coloring of the constraint graph of a $\BMal_{t,u}$-distributed permutation of $I$.

  The random coloring is strictly $k$-dependent iff $(q,k,t)$ satisfies the tuning equation
  \begin{equation}\label{eq:qktAna}
    qt[k]_t=[2]_t[k+1]_t.
  \end{equation}
\end{prop}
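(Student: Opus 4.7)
The plan is first to observe a key cancellation in the joint distribution of permutation and coloring, then to establish consistency of the finite-dimensional laws for Kolmogorov extension, and finally to identify the tuning equation through a recurrence for cylinder probabilities.

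Combining \eqref{eq:bMalCol} with Lemma~\ref{lem:colConst}, the factor $u^{\#\bub(\Gamma_\sigma)}$ with $u=(q-1)/(q-2)$ in the Mallows bias cancels the identical factor in $\Col_q(\Gamma_\sigma)$, so the joint law of $(\sigma,x)$ with $\sigma\sim\BMal_{t,u}^I$ and $x$ uniform among proper $q$-colorings of $\Gamma_\sigma$ is proportional to $t^{\inv(\sigma)}\mathbbm{1}[\sigma\vdash x]$. Summing over $\sigma$ gives the cylinder probability
$$P_I(x)=\frac{T_I(x)}{q(q-2)^{|I|-1}Z_I},\qquad T_I(x):=\sum_{\sigma\vdash x}t^{\inv(\sigma)},$$
where $Z_I$ is the $\BMal_{t,u}^I$ partition function. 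Consistency for right extension $\bbb{0,n}\hookrightarrow\bbb{0,n+1}$ reduces to the identity $\sum_{c\colon xc\text{ proper}}T_{n+1}(xc)=(q-2)(Z_{n+1}/Z_n)\,T_n(x)$, with $Z_{n+1}/Z_n=u+t+\cdots+t^n+ut^{n+1}$ by Lemma~\ref{lem:LehmerBub}. I would prove this identity by partitioning the permutations $\sigma'$ contributing to the left side by the arrival time $s=\sigma'(n+1)$ of the new rightmost element: extreme values $s\in\{0,n+1\}$, where $n+1$ has a single constraint-graph neighbor and $q-1$ valid extensions, contribute the two $u$-weighted terms, while intermediate values (two neighbors, $q-2$ valid extensions) contribute the $t^{n+1-s}$ terms through the inversion increment. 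The left-extension case requires a separate but analogous argument with inversion increment $s$ rather than $n+1-s$, since $\BMal_{t,u}$ is not reflection-symmetric when $u\ne 1$. Stationarity is automatic since $P_I(x)$ depends only on the intrinsic quantity $T_I(x)$.

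For strict $k$-dependence I would derive a recurrence expressing $P_{n+1}(xc)$ as a linear combination of $P_m(\cdot)$ for $m\le n$, $t$-generalizing the one in \cite{HL}. A direct calculation using the $t$-factorial identities of Section~\ref{sec:background} shows that the coefficient controlling coupling across distance $k$ is a scalar multiple of $qt[k]_t-[2]_t[k+1]_t$. When this vanishes --- precisely the tuning equation \eqref{eq:qktAna} --- cylinder probabilities on intervals separated by a gap of size at least $k$ factorize by induction, giving $k$-dependence. Conversely, $k$-dependence forces the coefficient to vanish by matching terms in the recurrence, and strictness follows since the equations $qt[k]_t=[2]_t[k+1]_t$ and $qt[k-1]_t=[2]_t[k]_t$ admit no common solution in $t\in(0,1]$ (a short calculation using $[n]_t=(1-t^n)/(1-t)$ shows they force $t=1$, at which point the two tuning equations become $k(q-2)=2=(k-1)(q-2)$, which is incompatible). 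The main technical obstacle is establishing the $t$-analog combinatorial identities for $T_I$ underlying both the consistency identity and the finite-dependence recurrence: these generalize the identities in \cite{HL} and require careful bookkeeping of inversion weights through the arrival-time decomposition and the insertion-code machinery of Lemmas~\ref{lem:LDinv} and \ref{lem:LehmerBub}.
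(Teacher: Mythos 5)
Your overall architecture matches the paper's (joint law of permutation and coloring, cancellation of the bubble weight against $\Col_q(\Gamma_\sigma)$, consistency plus Kolmogorov extension, then a recurrence whose critical coefficient is $qt[k]_t-[2]_t[k+1]_t$), but the mechanism you propose for the consistency identity has a genuine flaw. You partition the buildings of $xc$ by the arrival time $s$ of the new rightmost position and assert that extreme $s$ gives the new vertex a single constraint-graph neighbor ($q-1$ extensions) while intermediate $s$ gives exactly two neighbors ($q-2$ extensions). This is false: the degree of the rightmost vertex in $\Gamma_{\sigma'}$ is not determined by its arrival time, since every later arrival that lands in the gap between the current left neighbor and the new vertex creates a further edge to it. For instance, with positions $1,2,3$ and arrival order $(3,1,2)$ the rightmost vertex arrives first yet is adjacent to both $1$ and $2$; with order $1,5,2,3,4$ the vertex $5$ arrives at an intermediate time and has four neighbors. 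Concretely, for $x=12$ the contribution from arrival time $s=2$ is $t\bigl((q-2)+(q-1)t\bigr)$, not $(q-2)t(1+t)$: the claimed per-$s$ factorization fails term by term and only the aggregate identity holds, so your decomposition does not prove it. This is exactly why the paper decomposes instead by the \emph{position of the last arrival}: the last-arriving element has at most two constraint-graph neighbors, namely its integer neighbors, which yields the clean recurrences \eqref{eq:Brecurrence}--\eqref{eq:Brecurrence2} of Lemma~\ref{lem:recurrenceForProb}, and consistency is then proved by induction on the word length (as is the factorization identity behind $k$-dependence in Proposition~\ref{lem:kDep}). Your argument would need to be replaced by, or reduced to, an induction of this type.

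Two smaller points on the dependence statement. First, your converse ("$k$-dependence forces the coefficient to vanish") is stronger than what is true: $k$-dependence only forces the tuning equation at \emph{some} $k'\leq k$ (this is Lemma~\ref{lem:converseTuning}, obtained by iterating the recurrence down to $B_t(1\star 2)-B_t(1\star 1)=1+t\neq 0$), since a coloring tuned at $k'<k$ is also $k$-dependent. Consequently, for strictness it does not suffice to rule out simultaneous solutions at $k$ and $k-1$; you need that for fixed $(q,t)$ at most one $k$ satisfies \eqref{eq:qktAna}. Your own computation $[k]_t^2-[k-1]_t[k+1]_t=t^{k-1}(1-t)^2>0$ does give this, because it shows $[k+1]_t/[k]_t$ is strictly decreasing in $k$ while the tuning equation fixes this ratio at $qt/[2]_t$; but as written your argument only treats the adjacent pair, and the $t=1$ boundary case and the $k'=0$ (i.i.d.) case should be dispatched explicitly as well.
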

Note that the equation \eqref{eq:qktAna} is equivalent to \eqref{eqn:qkt} provided $t\not=1$.

Consider the probability measure $\Joint=\Joint_{t,q,n}$ on  $S_n\times \bbb{1,q}^n$ given by
$$
\Joint\Bigl(\bigl\{(\sigma,x)\bigr\}\Bigr)=\frac{\mathbbm{1}[\sigma\vdash x]t^{\inv(\sigma)}}{Z(t,q,n)},
$$
where the normalizing constant $Z(t,q,n)$ is
$$
Z(t,q,n)=\sum_{\sigma\in S_n}\sum_{x\in\bbb{1,q}^n}\mathbbm{1}[\sigma\vdash x]t^{\inv(\sigma)}=\sum_{\sigma\in S_n}\Col_q(\sigma)t^{\inv(\sigma)}.
$$

The permutation marginal of $\Joint$ is $\BMal$, by \eqref{eq:bMalCol}.
If the random pair $(\sigma,x)$ has law $\Joint$, then the conditional law of $x$ given $\sigma$ is the uniform measure on proper $q$-colorings of $\Gamma_\sigma$. We denote the marginal probability mass function of $x$ by $P^{\col}=P^{\col}_{t,q,n}$,
\begin{equation}\label{eq:pcol}
  P^{\col}(x)=\sum_{\sigma\in S_n}\frac{\mathbbm{1}[\sigma\vdash x]t^{\inv(\sigma)}}{Z(t,q,n)}.
\end{equation}
\begin{lemma}\label{lem:kolm}
  Let $t\in[0,1]$ and $q\geq 3$. There exists a measure $\MalCol=\MalCol_{q,t}$ on $\bbb{1,q}^{\m Z}$ such that if $X=(X_i)_{i\in\m Z}$ is random with law $\MalCol$, then $X$ is stationary and
  \begin{equation*}\label{eq:kolmo}
    \m P\bigl[(X_{i+1},\ldots,X_{i+n})=x\bigr]=\ptcoln{n}(x)
  \end{equation*}
  for all $i\in\m Z$, for all $n\geq 0$, and for all $x\in\bbb{1,q}^n$.
\end{lemma}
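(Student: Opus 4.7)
The plan is to invoke the Kolmogorov extension theorem applied to the family $\{\ptcoln{n}\}_{n\geq 1}$, so the core task is to verify the two consistency identities
\begin{align*}
  \sum_{x_{n+1}} \ptcoln{n+1}(x_1,\ldots,x_{n+1}) &= \ptcoln{n}(x_1,\ldots,x_n),\\
  \sum_{x_{1}}   \ptcoln{n+1}(x_1,\ldots,x_{n+1}) &= \ptcoln{n}(x_2,\ldots,x_{n+1})
\end{align*}
for every $n\geq 1$ and every word $x\in\bbb{1,q}^{n+1}$. Together, these two identities yield Kolmogorov consistency of the prescribed finite-dimensional marginals over all nested windows of $\m Z$, and they simultaneously deliver stationarity of the resulting process, since the marginal on any $n$-window becomes $\ptcoln{n}$ independently of the window's position.

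To attack the first identity, I would first compute the ratio $Z(t,q,n+1)/Z(t,q,n)$ explicitly by combining Lemma~\ref{lem:colConst} with Lemma~\ref{lem:LehmerBub}, which yields a concrete polynomial of the form $(q-1)+(q-2)(t+\cdots+t^{n-1})+(q-1)t^n$. I would then decompose the sum over $\sigma\in S_{n+1}$ appearing in the definition of $\ptcoln{n+1}$ by grouping permutations according to the arrival time $s=\sigma(n+1)$ of position $n+1$. Each $\sigma$ is then determined by $s$ together with the rank-shifted restriction $\tau\in S_n$ on $\bbb{1,n}$, with $\inv(\sigma)=\inv(\tau)+(n+1-s)$. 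The building condition $\sigma\vdash x$ splits into $\tau\vdash(x_1,\ldots,x_n)$ together with a constraint forcing $x_{n+1}$ to differ from the color at each index that successively becomes its left neighbor during the subsequent arrival sequence; these indices form an increasing chain of right-records of the $\tau$-arrivals starting from time $s$.

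The main obstacle is that the resulting count of admissible $x_{n+1}$ values depends on both $\tau$ and the color word $y=(x_1,\ldots,x_n)$, so the identity does \emph{not} hold at the level of a single $\tau$. It emerges only after aggregating over all $\tau\vdash y$, through cancellations in the $t^{\inv\tau}$-weighted sum. I expect to handle this aggregation via $t$-analogue manipulations in the spirit of Lemma~\ref{lem:LehmerBub} and identity~\eqref{eq:tMalId}, exploiting the explicit form of the ratio above. The second identity is proved by a structurally parallel argument, decomposing by the arrival time $\sigma(1)$ of the leftmost position; because the Mallows measure fails to be reflection-invariant for $t\ne 1$, this step requires a separate (though analogous) verification, and the geometry of the arrival picture there differs in detail from the first case.
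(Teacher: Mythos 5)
Your overall skeleton (two-sided consistency of the marginals $\ptcoln{n}$, then Kolmogorov extension, with stationarity coming for free from the window-independence of the prescribed marginals) is exactly the paper's: Lemma~\ref{lem:kolm} is deduced in one line from the consistency statement, Proposition~\ref{lem:consistency}. But that means the entire mathematical content lies in the consistency identities, and this is where your proposal has a genuine gap. Your plan is to fix the arrival time $s=\sigma(n+1)$ of the new rightmost position, reduce to a permutation $\tau\in S_n$ with $\inv(\sigma)=\inv(\tau)+(n+1-s)$ (correct so far), and then sum over $x_{n+1}$. As you yourself observe, the number of admissible values of $x_{n+1}$ is $q$ minus the number of \emph{distinct colors} along the chain of successive left-neighbors of $n+1$, which depends jointly on $\tau$ and on the word $y$, so the identity fails term-by-term and must emerge from cancellation after summing over $\tau\vdash y$ with weights $t^{\inv\tau}$. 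Your proposal stops precisely there: ``I expect to handle this aggregation via $t$-analogue manipulations'' is not an argument, and knowing the ratio $Z(t,q,n+1)/Z(t,q,n)=(q-1)+(q-2)(t+\cdots+t^{n-1})+(q-1)t^n$ in closed form (which is correct, and recovers $q[n+1]_t-[2]_t[n]_t$) does not help: the crux is that $\sum_a B_t(xa)$ equals a constant multiple of $B_t(x)$ with the \emph{same} constant for every word $x$; once constancy in $x$ is known, the value of the constant is automatic from normalization.

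The paper sidesteps your obstacle entirely by running the recursion in the opposite direction: instead of conditioning on when the appended position arrives and tracking which earlier colors it must avoid, it deletes the \emph{last-arriving} character, giving the recurrences \eqref{eq:Brecurrence} and \eqref{eq:Brecurrence2} for the building numbers $B_t$ (Lemma~\ref{lem:recurrenceForProb}), and then proves $B_t(x\star)=\bigl(q[n+1]_t-[2]_t[n]_t\bigr)B_t(x)$ (equation \eqref{eqn:Bconsist}) by induction on the word length: apply \eqref{eq:Brecurrence2} to $xa$, sum over $a$, invoke the inductive hypothesis on the shorter words $\hat x_i$, and collapse the remaining sum with \eqref{eq:Brecurrence}. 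The indicator terms in \eqref{eq:Brecurrence2} are exactly what absorbs the color-dependence that stalls your record-chain count, and no aggregation over permutations with a fixed color word is ever needed. Your left-marginalization remark (that reflection non-invariance of the Mallows measure forces a separate, analogous verification) matches the paper's treatment, but to complete your proof you would either need to supply the missing cancellation mechanism for your decomposition or switch to an induction on word length via deletion recurrences as above.
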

The random colorings of $\m Z$ which we construct in the proof of the main theorem have law $\MalCol_{q,t}$ for certain values of $t$.

By inspection of \eqref{eq:pcol}, observe that if $\sigma$ is a Mallows random permutation then
\begin{equation}\label{eq:malPcol}
  P^{\col}(x)=\frac{[n]_t^!}{Z(t,q,n)}\m P(\sigma\vdash x),\qquad x\in\bbb{1,q}^n.
\end{equation}
This characterization of $P^{\col}$ will be used in Section~\ref{sec:reverse} to prove reversibility.

The \textbf{building number} $B_t(x)$ is the unnormalized version of $\ptcoln{n}$ given by
\begin{equation*}\label{eqn:buildingRatio}
  B_t(x):=\sum_{\sigma\in S_n}\mathbbm{1}[\sigma\vdash x]t^{\inv(\sigma)}=Z(t,q,n)P^{\col}(x),\qquad x\in\bbb{1,q}^n.
\end{equation*}
This specializes when $t=1$ to the number of proper buildings, which was a key player in the earlier construction of \cite{HL}. Observe that
\begin{equation*}
  \sum_{x\in\bbb{1,q}^n}B_t(x)=\ztqn{n}.
\end{equation*}
and that $B_t(\emptyset)=1$.

The reason we use $B_t(x)$ (rather than using $P^{\col}$ directly) is that it satisfies simpler recurrences, as we will now see. We abbreviate a word $x=(x_i)_{i\in\bbb{1,n}}$ by writing $x=x_1\cdots x_n$, and we use the notation $\hat x_i:=x_1\cdots x_{i-1}x_{i+1}\cdots x_n$.
  \begin{lemma}\label{lem:recurrenceForProb}
    For all $n\geq 1$, all words $x \in \bbb{1,q}^n$, and all real $t\geq 0$ we have
    \begin{equation}\label{eq:Brecurrence}
      \Bn(x) = \mathbbm{1}[x \text{\emph{ is proper}}]\sum_{i=1}^n t^{n-i}\Bn(\hat x_i),\vspace{-5mm}
    \end{equation}
    and
    \begin{equation}\label{eq:Brecurrence2}
      \Bn(x)=\sum_{i=1}^{n}t^{n-i}\Bn(\hat x_i)-[2]_t\sum_{j=2}^{n}\mathbbm{1}[x_{j-1}=x_{j}]t^{n-j}\Bn(\hat x_j).
    \end{equation}
  \end{lemma}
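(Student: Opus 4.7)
The plan is to prove both recurrences from the definition
$B_t(x) = \sum_\sigma \mathbbm{1}[\sigma\vdash x]\,t^{\inv(\sigma)}$
by conditioning on the position of the last arrival, and to handle the non-proper cases in \eqref{eq:Brecurrence2} through a short case analysis on the set of bad adjacencies
$J := \{j\in\bbb{2,n} : x_{j-1} = x_j\}$.

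For \eqref{eq:Brecurrence}, I would split the sum defining $B_t(x)$ according to $i := \sigma^{-1}(n)$. The inversions involving $i$ are exactly the pairs $(i,j)$ with $j>i$ (since $\sigma(i)=n$ is maximal), contributing $n-i$, while the remaining inversions equal $\inv(\sigma')$, where $\sigma'$ is the restriction of $\sigma$ to $\bbb{1,n}\setminus\{i\}$, identified with a permutation of the index set of $\hat x_i$. The condition $\sigma\vdash x$ is equivalent to the conjunction ``$x$ is proper and $\sigma'\vdash \hat x_i$'': the only new constraint at time $n$ is that $x^\sigma(n)=x$ be proper, and for $t<n$ the word $x^\sigma(t)$ coincides with $x^{\sigma'}(t)$ under the natural relabeling. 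Summing $t^{(n-i)+\inv(\sigma')}$ over admissible pairs $(i,\sigma')$ yields the claimed identity.

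For \eqref{eq:Brecurrence2}, note first that if $x$ is proper then $J=\emptyset$, so the correction sum vanishes and the identity reduces to \eqref{eq:Brecurrence}. If $x$ is not proper then $B_t(x)=0$ (any $\sigma\vdash x$ would force $x^\sigma(n)=x$ to be proper), so it remains to verify
$$
  \sum_{i=1}^n t^{n-i} B_t(\hat x_i) \;=\; [2]_t \sum_{j\in J} t^{n-j} B_t(\hat x_j).
$$
If $|J|=1$ with $J=\{j_0\}$ and $c:=x_{j_0-1}=x_{j_0}$, the only deletions producing a proper word are at positions $j_0-1$ and $j_0$; both yield the same word (removing one of the two consecutive copies of $c$), and neither creates a new bad adjacency because $x$ is proper at its other positions. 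These two contributions combine as $(t^{n-j_0+1}+t^{n-j_0})B_t(\hat x_{j_0})=[2]_t\,t^{n-j_0}B_t(\hat x_{j_0})$, matching the right-hand side.

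If $|J|\geq 2$, then $\hat x_i$ is improper for \emph{every} $i\in\bbb{1,n}$, so both sides are zero term by term. Indeed, deleting $x_i$ eliminates only those bad adjacencies involving position $i$ or $i+1$, so two non-adjacent bad adjacencies cannot both be killed; meanwhile a run of adjacent bad adjacencies $x_{j-1}=x_j=x_{j+1}$ either leaves one bad adjacency untouched (when $i\ne j$) or produces the fresh bad adjacency $x_{j-1}=x_{j+1}$ upon deleting the middle character. The main obstacle is articulating this $|J|\geq 2$ dichotomy uniformly across all configurations of $J$ (separated bad adjacencies and runs of three or more equal consecutive characters), but in each subcase the same local obstruction applies.
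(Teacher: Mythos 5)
Your proposal is correct and follows essentially the same route as the paper: the first recurrence is proved identically, by splitting over $i=\sigma^{-1}(n)$ and using $\inv(\sigma)=\inv(\hat\sigma_i)+(n-i)$ together with the equivalence ``$\sigma\vdash x$ iff $x$ is proper and $\hat\sigma_i\vdash\hat x_i$''. For the second recurrence the paper expands $\mathbbm{1}[x\text{ proper}]=\prod_j(1-\mathbbm{1}[x_{j-1}=x_j])$ and discards cross terms, whereas you do a case analysis on the set $J$ of bad adjacencies, but both arguments rest on the same two observations (two distinct repeats kill every $B_t(\hat x_k)$; a single repeat at $j$ forces $k\in\{j-1,j\}$ with $\hat x_{j-1}=\hat x_j$, producing the factor $[2]_t$), and your $|J|\geq 2$ dichotomy as sketched is already complete.
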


  Equation \eqref{eq:Brecurrence} is a $t$-analogue of \cite[Prop. 9]{HL}.
  The variant recurrence \eqref{eq:Brecurrence2} (which was not used in \cite{HL}) simplies a large amount of casework.
  As an alternative to the proof below, one may deduce \eqref{eq:Brecurrence2} from \eqref{eq:Brecurrence} via the M\"obius Inversion Formula for posets \cite[Section 3.7]{stanley1997enumerative}.

  \begin{proof}[Proof of Lemma~\ref{lem:recurrenceForProb}]
    To prove equation \eqref{eq:Brecurrence}, observe that the permutation $\sigma$ is a proper building of $x$ with $\sigma^{-1}(n)=i$ if and only if $x$ is proper and the permutation $\hat \sigma_i:=\sigma_1\cdots\sigma_{i-1}\sigma_{i+1}\cdots\sigma_n\in S_{n-1}$ is a proper building of $\hat x_i$. Now \eqref{eq:Brecurrence} follows from the easy observation that $\inv(\sigma)=\inv(\hat \sigma_i)+n-i$.

    To establish \eqref{eq:Brecurrence2}, write ${\mathbbm{1}[\text{$x$ is proper}]}$ as ${\mathbbm{1}[x_1\not=x_2]\cdots\mathbbm{1}[x_{n-1}\not=x_n]}$. Then by \eqref{eq:Brecurrence},
    \begin{equation}\label{eqn:gettingMobius}
      \Bn(x)=\prod_{j=2}^{n}\bigl(1-\mathbbm{1}[x_{j-1}=x_{j}]\bigr)\sum_{i=1}^{n}t^{n-i}\Bn(\hat x_i).
    \end{equation}
    Next observe that for any pair of distinct indices $i\not=j$, the expression $$\mathbbm{1}[x_{j-1}=x_j]\,\mathbbm{1}[x_{i-1}=x_i]\,\Bn(\hat x_k)$$
    vanishes for all $k$. Indeed, any word $x$ with $x_{j-1}=x_j$ and $x_{i-1}=x_i$ must still have adjacent repeated indices even after deleting an arbitrary symbol, and so the resulting word has no proper buildings. Expanding \eqref{eqn:gettingMobius} and discarding such terms,
  \begin{align*}
    \Bn(x)&=\Bigl(1-\sum_{j=2}^{n}\mathbbm{1}[x_{j-1}=x_{j}]\Bigr)\sum_{i=1}^{n}t^{n-i}\Bn(\hat x_i).
  \end{align*}
  If the expression $\mathbbm{1}[x_{j-1}=x_j]\Bn(\hat x_k)$ is non-zero, then $k\in\{j-1,j\}$ and $\hat x_k=\hat x_j$. Thus
  \begin{align*}
    \Bn(x)&=\sum_{i=1}^{n}t^{n-i}\Bn(\hat x_i)-\sum_{j=2}^{n}\mathbbm{1}[x_{j-1}=x_{j}]\sum_{i=1}^{n}t^{n-i}\Bn(\hat x_i)\\
    &=\sum_{i=1}^{n}t^{n-i}\Bn(\hat x_i)-\sum_{j=2}^{n}\mathbbm{1}[x_{j-1}=x_{j}](t^{n-j}+t^{n-j+1})\Bn(\hat x_j).
  \end{align*}
  Factoring out $[2]_t=t+1$ from the second term in the latter expression yields \eqref{eq:Brecurrence2}.
\end{proof}

Using these recurrences, we show that the marginals of $\MalCol$ are consistent.
\begin{prop}[Consistency]\label{lem:consistency}
  For all $x\in \bbb{1,q}^n$ and all $a\in \bbb{1,q}$ we have that
  $$
    \sum_{a\in\bbb{1,q}}\ptcoln{n+1}(ax) = \sum_{a\in\bbb{1,q}}\ptcoln{n+1}(xa)=\ptcoln{n}(x).
  $$
\end{prop}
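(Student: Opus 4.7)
The plan is to establish the stronger (and equivalent) identity that for every word $x\in\bbb{1,q}^n$ there is a constant $c_n$ depending only on $n$, $t$, and $q$ with
\[
  \sum_{a\in\bbb{1,q}} \Bn(xa) \;=\; c_n\,\Bn(x) \;=\; \sum_{a\in\bbb{1,q}} \Bn(ax).
\]
Summing either identity over $x$ will force $c_n=\ztqn{n+1}/\ztqn{n}$, so dividing by $\ztqn{n+1}$ yields the proposition. Since $\Bn$ vanishes on non-proper words, there is no need to treat the properness of $x$ separately.

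I would prove the identity by induction on $n$, with base case $n=0$ (where $\Bn(\emptyset)=1$ and $\sum_a \Bn(a)=q$, so $c_0=q$). For the inductive step, apply the recurrence \eqref{eq:Brecurrence2} to $\Bn(xa)$ viewed as a word of length $n+1$. The term $i=n+1$ in the first sum contributes $\Bn(x)$ independently of $a$ (summing to $q\Bn(x)$), and the term $j=n+1$ in the second sum contributes $[2]_t\,\mathbbm{1}[a=x_n]\Bn(x)$ (summing to $[2]_t\Bn(x)$). For every remaining index $i,j\le n$, the deleted word is $\widehat{(xa)}_i=\hat x_i\,a$, so summing over $a$ allows the inductive hypothesis to replace $\sum_a\Bn(\hat x_i\,a)$ by $c_{n-1}\Bn(\hat x_i)$. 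After factoring out $t\,c_{n-1}$, the surviving bracket is precisely the right-hand side of \eqref{eq:Brecurrence2} applied to $x$, and thus equals $\Bn(x)$. The conclusion is $\sum_a \Bn(xa)=(q-[2]_t+t\,c_{n-1})\Bn(x)$, giving the recurrence $c_n=q-1-t+t\,c_{n-1}$.

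The sum $\sum_a\Bn(ax)$ is handled by the mirror argument: now the index $i=1$ in the first sum contributes the $a$-free term $t^n\Bn(x)$, and the indicator from $j=2$ supplies the boundary correction $[2]_t\,t^{n-1}\Bn(x)$; the remaining indices collapse through the induction and reassemble into $\Bn(x)$ via a second application of \eqref{eq:Brecurrence2}, producing $\sum_a\Bn(ax)=d_n\Bn(x)$ with $d_n=(q-1)t^n-t^{n-1}+d_{n-1}$. The two recurrences look different, but $c_n$ and $d_n$ must coincide since both equal $\ztqn{n+1}/\ztqn{n}$ by the observation in the first paragraph. The main obstacle is the combinatorial bookkeeping: one must correctly separate the genuine boundary terms (those independent of $a$, or attached to indicators involving $a$) from the bulk, and recognize that after the inductive substitution the bulk reassembles into the very recurrence that was just applied, producing the cancellation that delivers the factor of $\Bn(x)$.
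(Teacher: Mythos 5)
Your proposal is correct and follows essentially the same route as the paper: an induction on $n$ via the recurrence \eqref{eq:Brecurrence2}, splitting off the boundary terms ($i=n+1$, $j=n+1$ for appending; $i=1$, $j=2$ for prepending) and reassembling the bulk into $\Bn(x)$ after applying the inductive hypothesis. The only cosmetic differences are that the paper assumes $x$ proper (handling non-proper words trivially) and reassembles via \eqref{eq:Brecurrence}, computing the constant explicitly as $q[n+1]_t-[2]_t[n]_t$ (a formula it reuses later for $\ztqn{n}$), whereas you keep the indicator terms, reassemble via \eqref{eq:Brecurrence2}, and identify the constant as $\ztqn{n+1}/\ztqn{n}$ by summing over $x$ — all of which is sound.
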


In the following proof and for the remainder of this section, we write $\star$ to denote a dummy variable that is summed over $\bbb{1,q}$. For example, given a function $f\colon \bbb{1,q}^k \to \R$ and a word $x$, we write $f(x\star^k)$ as a shorthand for $\sum_{y \in \bbb{1,q}^k} f(xy)$.

\begin{proof}
  To prove that $\ptcoln{n+1}(\star x)=\ptcoln{n}(x)$, we establish by induction on $n$ that
  \begin{equation}\label{eqn:Bconsist}
    \Bn(x \star)=(q[n+1]_t-[2]_t[n]_t)\Bn(x),\qquad x\in\bbb{1,q}^n.
  \end{equation}
  This is clear when $n=0$. Let $x$ be a word of length $n$, and suppose that \eqref{eqn:Bconsist} holds for all words of length at most $n-1$.  Also suppose that $x$ is proper, for otherwise \eqref{eqn:Bconsist} is trivial.

  Applying equation \eqref{eq:Brecurrence2} from Lemma \ref{lem:recurrenceForProb}, we see that for any $a\in \bbb{1,q}$ we have that
  \begin{equation}
    \Bn(xa)=t\sum_{i=1}^n t^{n-i}\Bn(\hat x_ia)+\Bn(x)-\mathbbm{1}[x_n=a][2]_t \Bn(x).
  \end{equation}
  Summing over all $a\in \bbb{1,q}$ yields that
  \begin{align*}
    \Bn(x \star)&=t\sum_{i=1}^n t^{n-i}\Bn(\hat x_i  \star)+q\Bn(x)-[2]_t \Bn(x).
  \end{align*}
  Hence by the inductive hypothesis
  \begin{align}\label{eq:lastStepConsist}
    \Bn(x \star)&=t(q[n]_t-[2]_t[n-1]_t)\sum_{i=1}^n t^{n-i}\Bn(\hat x_i)+q\Bn(x)-[2]_t \Bn(x).
  \end{align}
  By \eqref{eq:Brecurrence} we have that $\sum_{i=1}^n t^{n-i}\Bn(\hat x_i)=\Bn(x)$. Substituting this into \eqref{eq:lastStepConsist} and using the trivial identity $t[n]_t+1=[n+1]_t$, we deduce \eqref{eqn:Bconsist}.

  Combining \eqref{eqn:Bconsist} with \eqref{eqn:buildingRatio} yields that $\ptcoln{n+1}(x\star)=\ptcoln{n}(x)$. By an analogous argument, we have that ${\ptcoln{n+1}(\star x)=\ptcoln{n}(x)}$ as well.
\end{proof}
Lemma~\ref{lem:kolm} now follows easily.
\begin{proof}[Proof of Lemma~\ref{lem:kolm}]
  The family of cylinder measures in the statement of the lemma is consistent, by Proposition \ref{lem:consistency}. Thus by the Kolmogorov extension theorem \cite{kallenberg2006foundations} there exists a random coloring, $X$, for which \eqref{eq:kolmo} holds. Stationarity of $X$ is immediate.
\end{proof}

Next we derive an expression for the normalizing constant $\ztqn{n}$. 
When $(q,k,t)$ satisfies the tuning equation \eqref{eq:qktAna}, extra simplifications occur, as can be seen already in the following lemma.
\begin{lemma}\label{lem:zkqt}
  For all integers $n\geq 1$ we have that
  \begin{equation}\label{eqn:znqPartition}
    \ztqn{n}=\prod_{j=1}^{n}\bigl(q[j]_t-[2]_t[j-1]_t\bigr).
  \end{equation}
  Moreover, when $qt[k]_t=[2]_t[k+1]_t$, equation \eqref{eqn:znqPartition} may be rewritten as
  \begin{equation}\label{eqn:zkqPartition}
    \ztqn{n}=[n]^!_t\left(\frac{q}{[k+1]_t}\right)^n\binom{k+n}{k}_t.
  \end{equation}
\end{lemma}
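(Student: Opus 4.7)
The plan is to prove \eqref{eqn:znqPartition} by induction on $n$, summing the consistency identity \eqref{eqn:Bconsist} from the proof of Proposition~\ref{lem:consistency}, and then to deduce \eqref{eqn:zkqPartition} from \eqref{eqn:znqPartition} by an algebraic manipulation that hinges on the tuning equation.

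For \eqref{eqn:znqPartition}, first observe that
\[
\ztqn{n+1}=\sum_{x\in\bbb{1,q}^{n+1}}B_t(x)=\sum_{x\in\bbb{1,q}^n}B_t(x\star).
\]
Now apply the identity
$B_t(x\star)=\bigl(q[n+1]_t-[2]_t[n]_t\bigr)B_t(x)$ established in \eqref{eqn:Bconsist} (which holds for every $x\in\bbb{1,q}^n$, proper or not) to obtain the recursion $\ztqn{n+1}=\bigl(q[n+1]_t-[2]_t[n]_t\bigr)\ztqn{n}$. The base case $\ztqn{0}=1$ is immediate since the only summand is the empty permutation, and \eqref{eqn:znqPartition} follows by induction.

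For \eqref{eqn:zkqPartition}, I would use the tuning equation $qt[k]_t=[2]_t[k+1]_t$ to substitute $[2]_t=qt[k]_t/[k+1]_t$ into the generic factor of \eqref{eqn:znqPartition}, yielding
\[
q[j]_t-[2]_t[j-1]_t=\frac{q}{[k+1]_t}\Bigl([j]_t[k+1]_t-t[k]_t[j-1]_t\Bigr).
\]
The combinatorial heart of the computation is the $t$-analogue identity
\[
[j]_t[k+1]_t-t[k]_t[j-1]_t=[j+k]_t,
\]
which I would verify by writing $[k+1]_t=1+t[k]_t$ so the left-hand side becomes $[j]_t+t[k]_t\bigl([j]_t-[j-1]_t\bigr)=[j]_t+t^j[k]_t=[j+k]_t$.

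Substituting this back, \eqref{eqn:znqPartition} telescopes as
\[
\ztqn{n}=\left(\frac{q}{[k+1]_t}\right)^n\prod_{j=1}^{n}[j+k]_t
=\left(\frac{q}{[k+1]_t}\right)^n\frac{[k+n]^!_t}{[k]^!_t}
=[n]^!_t\left(\frac{q}{[k+1]_t}\right)^n\binom{k+n}{k}_t,
\]
which is \eqref{eqn:zkqPartition}. I do not anticipate any serious obstacle; the only nontrivial step is the $t$-analogue identity above, and it is precisely this algebraic coincidence that explains why the tuning condition collapses the generic product into a clean $t$-binomial expression.
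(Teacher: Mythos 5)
Your proposal is correct and follows essentially the same route as the paper: summing \eqref{eqn:Bconsist} over words to get the recursion $\ztqn{n+1}=\bigl(q[n+1]_t-[2]_t[n]_t\bigr)\ztqn{n}$, then using the tuning equation together with the identity $[j+k]_t=[j]_t+t^j[k]_t$ to rewrite each factor as $q[k+j]_t/[k+1]_t$ and telescoping into the $t$-binomial form. The algebraic identity you isolate is the same one the paper verifies, just stated before rather than after clearing the denominator $[k+1]_t$.
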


\begin{proof}
  It follows from \eqref{eqn:Bconsist} that if $x$ is a word of length $n$ then $B_t(x\ \star)=\bigl(q[n+1]_t-[2]_t[n]_t\bigr)B_t(x)$. Summing over $x\in \bbb{1,q}^n$ yields that
  $$
    \ztqn{n+1}=\bigl(q[n+1]_t-[2]_t[n]_t)\ztqn{n}.
  $$
  Now a simple induction establishes \eqref{eqn:znqPartition}.

  Next, suppose that $(q,k,t)$ satisfies the tuning equation \eqref{eq:qktAna}. We show that for every $j\geq 1$,
  \begin{equation}\label{eqn:qjt}
    \bigl(q[j]_t-[2]_t[j-1]_t\bigr)[k+1]_t=q[k+j]_t.
  \end{equation}
  Indeed, the tuning equation allows us to substitute $qt[k]_t$ in place of $[2]_t[k+1]_t$. Furthermore it is easy to see that $[k+j]_t=[j]_t+t^j[k]_t$. Thus \eqref{eqn:qjt} reduces to
  $$
    q[j]_t[k+1]_t-qt[k]_t[j-1]_t=q[j]_t+qt^j[k]_t,
  $$
  i.e., $q[j]_t\bigl([k+1]_t-1\bigr)=q[k]_t\bigl(t^j+t[j-1]_t\bigr)$, which is apparent since both quantities simplify to $qt[j]_t[k]_t$.

  Now \eqref{eqn:zkqPartition} follows from \eqref{eqn:znqPartition} by using \eqref{eqn:qjt} to rewrite each factor of the product.
\end{proof}

\begin{prop}[$k$-dependence]\label{lem:kDep}
  Suppose that the integers $q\geq 3$ and $k\geq 1$ and the real number $t\geq 0$ satisfy the tuning equation $qt[k]_t=[2]_t[k+1]_t$ \eqref{eq:qktAna}. Then for all $x\in \bbb{1,q}^m$ and $y\in \bbb{1,q}^n$ we have that
  \begin{equation}
    \sum_{a\in \bbb{1,q}^k}P^{\col}(xay)=P^{\col}(x)P^{\col}(y).
  \end{equation}
\end{prop}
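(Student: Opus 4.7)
The plan is to prove the equivalent factorization of building numbers
\[
  B^{\star}_k(x,y) := \sum_{a\in \bbb{1,q}^k} B_t(xay) \;=\; C_{m,n,k}\, B_t(x)\, B_t(y), \qquad |x|=m,\ |y|=n,
\]
where $C_{m,n,k} := Z(t,q,m+k+n)/\bigl(Z(t,q,m)\,Z(t,q,n)\bigr)$; after dividing by $Z(t,q,m+k+n)$ this is exactly the statement of the proposition. I will induct on $m+n$. The base cases $m=0$ or $n=0$ reduce, by repeated application of Proposition~\ref{lem:consistency} on the relevant end, to the identity $Z(t,q,j+1) = (q[j+1]_t - [2]_t[j]_t)\,Z(t,q,j)$ recorded in the proof of Lemma~\ref{lem:zkqt}.

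For the inductive step with $m,n\geq 1$, apply \eqref{eq:Brecurrence2} to $B_t(xay)$ and sum over $a \in \bbb{1,q}^k$. Each of the two sums on the right-hand side decomposes according to whether the deleted index $i$ (resp.\ $j$) lies in $x$, in the gap $a$, or in $y$. Indices inside $x$ or $y$ produce, after summing over $a$, weighted copies of $B^\star_k(\hat x_i, y)$ or $B^\star_k(x,\hat y_\ell)$ with the expected equality indicators. Indices inside the gap always contract, after the sum over $a$, to a copy of $B^\star_{k-1}(x,y) := \sum_{a' \in \bbb{1,q}^{k-1}} B_t(xa'y)$: from the first sum these gap-interior terms contribute $qt^n[k]_t\, B^\star_{k-1}(x,y)$, and from the second sum there are three boundary subcases ($j=m+1$, $j$ strictly inside $a$, and $j=m+k+1$) whose combined $t$-weight telescopes via $1 + t[k-1]_t + t^k = [k+1]_t$ to $-[2]_t\,t^{n-1}[k+1]_t\, B^\star_{k-1}(x,y)$. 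The net coefficient of $B^\star_{k-1}(x,y)$ is $t^{n-1}\bigl(qt[k]_t - [2]_t[k+1]_t\bigr)$, which vanishes by the tuning equation \eqref{eq:qktAna}. This is the sole role of the tuning equation in the proof.

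The surviving terms rearrange as $t^{k+n}\cdot[\text{$x$-recurrence}] + [\text{$y$-recurrence}]$, where each bracketed expression is the right-hand side of \eqref{eq:Brecurrence2} applied to $x$ or $y$ individually, but with each $B_t(\hat x_i)$ and $B_t(\hat y_\ell)$ replaced by $B^\star_k(\hat x_i, y)$ and $B^\star_k(x, \hat y_\ell)$. Applying the inductive hypothesis and then collapsing the bracketed expressions via \eqref{eq:Brecurrence2} yields
\[
  B^\star_k(x,y) = \bigl(t^{k+n}C_{m-1,n,k}+C_{m,n-1,k}\bigr)\,B_t(x)\,B_t(y).
\]
The required identity $C_{m,n,k} = t^{k+n}C_{m-1,n,k}+C_{m,n-1,k}$ then follows from the formula $Z(t,q,j)/Z(t,q,j-1) = q[k+j]_t/[k+1]_t$ (from Lemma~\ref{lem:zkqt} under tuning) together with the elementary identity $[k+m+n]_t = t^{k+n}[k+m]_t + [k+n]_t$, closing the induction. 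I expect the main obstacle to be the bookkeeping in the case analysis: six position regions for $j$ in the second sum must be tracked with correct $t$-weights and equality indicators, and the success of the entire argument hinges on the three gap-interior contributions combining into exactly a multiple of $[k+1]_t$, so that the tuning equation produces the required cancellation.
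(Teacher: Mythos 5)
Your proposal is correct and follows essentially the same route as the paper: expand $\sum_a B_t(xay)$ via \eqref{eq:Brecurrence2}, observe that after summing over the gap the coefficient of the $\star^{k-1}$ term is $t^{n-1}\bigl(qt[k]_t-[2]_t[k+1]_t\bigr)$ and vanishes by the tuning equation \eqref{eq:qktAna}, and close an induction (on $m+n$) using the two surviving recurrences. The only differences are bookkeeping: you carry the constant as $Z(t,q,m+k+n)/\bigl(Z(t,q,m)Z(t,q,n)\bigr)$, handle the base cases via Proposition~\ref{lem:consistency}, and close with $Z(t,q,j)/Z(t,q,j-1)=q[k+j]_t/[k+1]_t$ from Lemma~\ref{lem:zkqt} together with $[k+m+n]_t=t^{k+n}[k+m]_t+[k+n]_t$, whereas the paper carries the explicit $t$-binomial constant, uses the $t$-Pascal rule \eqref{eq:tBinom}, and normalizes the constant away at the end; these are equivalent under tuning (so, contrary to your closing remark, the tuning equation is also used in your final $Z$-ratio identity, not only in the cancellation).
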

This proposition implies that a coloring with law $\MalCol$ is $k$-dependent whenever $(q,k,t)$ satisfies the tuning equation \eqref{eq:qktAna}. In the proof, we use a well-known $t$-binomial coefficient identity appearing in \cite[{eq.\ (17b)}]{stanley1997enumerative} which states that, for integers $r$ and $s$,
\begin{equation}\label{eq:tBinom}
    \binom{r}{s}_t = \binom{r-1}{s}_t + t^{r-s}\binom{r-1}{s-1}_t.
\end{equation}
\begin{proof}[Proof of Proposition~\ref{lem:kDep}]
  We show that for $x\in \bbb{1,q}^{m}$ and $y\in \bbb{1,q}^n$,
  \begin{equation}\label{eqn:buildingKdep}
    \Bn(x \star^k y)
    =[k]^!_t\left(\frac{q}{[k+1]_t}\right)^k\binom{m+n+2k}{m+k}_t\Bn(x)\Bn(y).
  \end{equation}
  By normalizing both sides of \eqref{eqn:buildingKdep}, it will follow that $P^{\col}(x\star^ky)=c_{m,k,n}P^{\col}(x)P^{\col}(y)$ for some constant $c_{m,k,n}$. But both sides are probability mass functions, so $c_{m,k,n}=1$ and thus the lemma follows directly from \eqref{eqn:buildingKdep}.

  We prove \eqref{eqn:buildingKdep} by induction on $m$ and $n$. The case $m=n=0$ follows from the special case $n=k$ of \eqref{eqn:zkqPartition} in Lemma \ref{lem:zkqt}.

  Suppose that \eqref{eqn:buildingKdep} holds for all words $x$ and $y$ with lengths $m-1$ and $n$ respectively, and for all words $x$ and $y$ with lengths $m$ and $n-1$ respectively. Furthermore, suppose that $x$ and $y$ are proper, since the desired result holds trivially otherwise. Let $a=a_1a_2\cdots a_k$ denote a word; $a$ will be summed over $\bbb{1,q}^k$ below. Applying equation \eqref{eq:Brecurrence2} of Lemma \ref{lem:recurrenceForProb} yields that
  \begin{align*}
    \Bn(xay)&=
    \sum_{i=1}^{m}t^{m+k+n-i}\Bn(\hat x_iay)+\sum_{i=1}^kt^{k+n-i}\Bn(x\hat a_iy)+\sum_{i=1}^nt^{n-i}\Bn(xa\hat y_i)\\
    &\quad-[2]_t\mathbbm{1}[x_m=a_1]t^{k+n-1}\Bn(x\hat a_1y)\\
    &\quad-[2]_t\sum_{i=1}^{k-1}\mathbbm{1}[a_i=a_{i+1}]t^{k+n-i-1}\Bn(x\hat a_iy)\\
    &\quad-[2]_t\mathbbm{1}[a_k=y_1]t^{n-1}\Bn(x\hat a_ky).
  \end{align*}
  Summing over all $a\in \bbb{1,q}^k$ implies that
  \begin{align}
    \Bn(x\star^ky)&=\sum_{i=1}^{m}t^{m+k+n-i}\Bn(\hat x_i\star^ky)+q\sum_{i=1}^kt^{k+n-i}\Bn(x\star^{k-1}y)+\sum_{i=1}^nt^{n-i}\Bn(x\star^k\hat y_i)\nonumber \\
    &\quad-[2]_tt^{k+n-1}\Bn(x\star^{k-1}y)\nonumber \\
    &\quad-[2]_t\sum_{i=1}^{k-1}t^{k+n-i-1}\Bn(x\star^{k-1}y)\nonumber \\
    &\quad-[2]_tt^{n-1}\Bn(x\star^{k-1}y)\nonumber \\
    &=t^{k+n}\sum_{i=1}^m t^{m-i}\Bn(\hat x_i\star^ky)+\sum_{i=1}^n t^{n-i}\Bn(x\star^k\hat y_i)\nonumber \\
    &\quad +\left(qt^m[k]_t-t^{m-1}[2]_t[k+1]_t\right)\Bn(x\star^{k-1}y).\label{eqn:buildingPartialKdep}
  \end{align}
  Note that we have not yet used the assumption that $(q,k,t)$ satisfies the tuning equation \eqref{eq:qktAna}. Crucially, the coefficient of $B_t(x\ \star^{k-1}\ y)$ in \eqref{eqn:buildingPartialKdep} vanishes when the tuning equation is satisfied, so that
  \begin{equation}\label{eq:starSum}
    \Bn(x\star^ky)=t^{k+n}\sum_{i=1}^m t^{m-i}\Bn(\hat x_i\star^ky)+\sum_{i=1}^n t^{n-i}\Bn(x\star^k\hat y_i).
  \end{equation}
  By the inductive hypothesis and equation \eqref{eq:Brecurrence} from Lemma~\ref{lem:recurrenceForProb}, \eqref{eq:starSum} expands to
    \begin{multline*}
      [k]^!_t\left(\frac{q}{[k+1]_t}\right)^k\left[t^{k+n}\binom{m+n+2k-1}{m+k-1}_t\hspace{-1.5mm}+\binom{m+n+2k-1}{m+k}_t\right]\Bn(x)\Bn(y).
    \end{multline*}
    Using \eqref{eq:tBinom}, the above expression simplifies to
    \begin{equation*}
      \Bn(x\star^ky)=[k]^!_t\left(\frac{q}{[k+1]_t}\right)^k\binom{m+n+2k}{m+k}_t\Bn(x)\Bn(y).\qedhere
    \end{equation*}
  \end{proof}

  The final result we will need for Proposition~\ref{prop:sec3main} is a converse to the previous lemma.
  \begin{lemma}\label{lem:converseTuning}
    Let $q\geq 3$ and $t\geq 0$ be given. Suppose that $k$ is a number such that, for all $m\geq 0$ and $n\geq 0$ and for all words $x\in\bbb{1,q}^m$ and $y\in\bbb{1,q}^n$,
    $$
      \sum_{a\in\bbb{1,q}^k}P^{\col}(xay)=P^{\col}(x)P^{\col}(y).
    $$
    Then there exists an integer $k'\leq k$ such that $(q,k',t)$ satisfies the tuning equation \eqref{eq:qktAna}.
  \end{lemma}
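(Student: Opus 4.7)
The plan is to induct downward on $k$, exploiting the identity \eqref{eqn:buildingPartialKdep} from the proof of Proposition~\ref{lem:kDep}. That identity holds \emph{before} the tuning equation is invoked, and in the earlier proof the tuning equation was precisely the condition that zeroed out the coefficient $t^{m-1}\bigl(qt[k]_t - [2]_t[k+1]_t\bigr)$ of $B_t(x\star^{k-1}y)$. Here I exploit the other side of the dichotomy: if that coefficient does \emph{not} vanish, then the $k$-dependence hypothesis instead forces $(k-1)$-dependence, and the induction will then terminate.

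First I would dispose of the base case $k=0$. Taking $x = y = (a)$ for some $a\in\bbb{1,q}$ in the hypothesis gives $0 = P^{\col}(aa) = P^{\col}(a)^2 = 1/q^2 > 0$, a contradiction, so the hypothesis is never met and the claim is vacuous. For the boundary value $t=0$ a direct check shows $\MalCol_{q,0}$ is the uniform Markov chain on proper colorings, which is not $k$-dependent for any $k\geq 1$; since the tuning equation \eqref{eq:qktAna} reduces to $0=1$ at $t=0$, this case is also vacuous, and I henceforth assume $t>0$.

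For the inductive step, suppose $k\geq 1$, that the lemma holds at $k-1$, and that the hypothesis holds at $k$. Rewriting $k$-dependence as
\[
  B_t(x\star^k y) \;=\; C(m,k,n)\,B_t(x)B_t(y),\qquad C(m,k,n):=\frac{Z(t,q,m+k+n)}{Z(t,q,m)\,Z(t,q,n)},
\]
and substituting this, together with the building identity \eqref{eq:Brecurrence}, into \eqref{eqn:buildingPartialKdep}, I obtain for all proper $x,y$ with $m=|x|\geq 1$ and $n=|y|\geq 1$ the relation
\[
  \bigl[C(m,k,n)-t^{k+n}C(m-1,k,n)-C(m,k,n-1)\bigr]B_t(x)B_t(y) \;=\; t^{m-1}\bigl(qt[k]_t-[2]_t[k+1]_t\bigr)B_t(x\star^{k-1}y).
\]
If $qt[k]_t=[2]_t[k+1]_t$, take $k'=k$ and we are done. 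Otherwise the right-hand coefficient is nonzero for every $m\geq 1$, so $B_t(x\star^{k-1}y)$ is forced to factor as a scalar $\lambda(m,n)$ times $B_t(x)B_t(y)$. Marginalizing the corresponding identity for $P^{\col}$ over $y\in\bbb{1,q}^n$ and invoking the consistency Proposition~\ref{lem:consistency} (together with the fact that $B_t(x)>0$ for every proper $x$) pins $\lambda(m,n)$ at $C(m,k-1,n)$. Hence $P^{\col}(x\star^{k-1}y)=P^{\col}(x)P^{\col}(y)$ for proper $x,y$ with $m,n\geq 1$; the edge cases $m=0$ or $n=0$ follow trivially from consistency with $P^{\col}(\emptyset)=1$, and non-proper inputs are handled automatically since both sides vanish. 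This is $(k-1)$-dependence, so the inductive hypothesis produces a $k''\leq k-1$ satisfying the tuning equation, giving the required $k'\leq k$.

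The main obstacle I anticipate is the bookkeeping to push all the edge cases (empty words, non-proper inputs, and the boundary $t=0$) through the downward-induction narrative cleanly. The algebraic core — the clean factorization of the coefficient as $t^{m-1}\bigl(qt[k]_t-[2]_t[k+1]_t\bigr)$, which decouples the dependence on $m$ from the dependence on the triple $(q,k,t)$ — is essentially already present in the proof of Proposition~\ref{lem:kDep} and requires no new combinatorial input.
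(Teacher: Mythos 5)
Your argument is correct, but it follows a genuinely different route from the paper's. Both proofs pivot on the same identity \eqref{eqn:buildingPartialKdep} and on the observation that the coefficient of $B_t(x\star^{k-1}y)$ there is (up to a power of $t$) exactly the tuning expression $qt[k]_t-[2]_t[k+1]_t$. The paper, however, never descends through the dependence hierarchy: it specializes \eqref{eqn:buildingPartialKdep} to the two length-one words $12$ and $11$, subtracts so that the unwanted terms cancel by color symmetry, and iterates the resulting \emph{scalar} recursion into an unconditional product formula
\[
B_t(1\star^k 2)-B_t(1\star^k 1)=\bigl(B_t(12)-B_t(11)\bigr)\prod_{k'=1}^{k}\bigl(\text{tuning expression at }k'\bigr),
\]
after which the hypothesis is invoked only once, for $m=n=1$, to force the left side to vanish and hence some factor to vanish. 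You instead prove the stronger intermediate statement that if the tuning equation fails at level $k$, then the factorization hypothesis at $k$ propagates down to level $k-1$ — using the full hypothesis for all word lengths, the recurrence \eqref{eq:Brecurrence}, the consistency Proposition~\ref{lem:consistency} to pin the constant $\lambda(m,n)=C(m,k-1,n)$, and the positivity of $B_t$ on proper words — and then induct down to the vacuous base case $k=0$. What each buys: the paper's argument is shorter and needs the hypothesis only for single-character $x$ and $y$; yours yields a cleaner structural fact ($k$-dependence plus failure of tuning at $k$ implies $(k-1)$-dependence) and treats $t=0$ explicitly, a boundary case the paper passes over silently (harmlessly, since the hypothesis is unsatisfiable there). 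The only soft spot is your "direct check" that $\MalCol_{q,0}$ is not $k$-dependent for any $k\geq 1$; it deserves one line — e.g.\ the transition matrix of the uniform proper-coloring chain has eigenvalue $-1/(q-1)\neq 0$, so $X_0$ and $X_{k+1}$ are never independent — but it is routine, and the rest of your bookkeeping (marginalizing over all $y$, including non-proper ones, and the empty-word cases) checks out.
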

  \begin{proof}
    From \eqref{eqn:buildingPartialKdep} in the proof of Proposition \ref{lem:kDep}, if $x$ is a word of length $m$ and $y$ is a word of length $n$ then
    \begin{align*}
      \Bn(x\star^ky)&=t^{k+n}\sum_{i=1}^m t^{m-i}\Bn(\hat x_i\star^ky)+\sum_{i=1}^n t^{n-i}\Bn(x\star^k\hat y_i)\nonumber \\
      &\quad +\left(qt^m[k]_t-t^{m-1}[2]_t[k+1]_t\right)\Bn(x\star^{k-1}y),
    \end{align*}
    which holds for all $q$, $k$, and $t$. In particular, taking $x$ and $y$ to be words of length one, and subtracting two instances of the last equation yields
    $$
      B_t(1\star^k 2)-B_t(1\star^k 1)
      =\left(qt^k[k]_t-t^{k-1}[2]_t[k+1]_t\right)\bigl(B_t(1\star^{k-1} 2)-B_t(1\star^{k-1} 1)\bigr).
    $$
    Upon iterating this identity, we obtain that
    \begin{equation*}\label{eqn:converseQkt}
      \Bn(1\, \star^k\, 2)-\Bn(1\, \star^k\,1)
      =\bigl(\Bn(12)-\Bn(11)\bigr)\prod_{k'=1}^{k}\left(qt^{k'}[k']_t-t^{k'-1}[2]_t[k'+1]_t\right).
    \end{equation*}
    Under our hypotheses on $k$, the left side of the previous equation vanishes. Since $B_t(12)=t+1$ is non-zero but $B_t(11)=0$, one of the factors in the product on the right vanishes.
  \end{proof}

  We conclude this section by proving the proposition stated at the very beginning.
  \begin{samepage}
  \begin{proof}[Proof of Proposition~\ref{prop:sec3main}]
    We show that, for all $t\in [0,1]$ and all $q\geq 3$, the measure $\MalCol_{q,t}$ is the law of a random coloring $X=(X_i)_{i\in\m Z}$ satisfying all conditions in the proposition. Namely,
    \begin{itemize}
      \item for each finite interval $I$, the restricted coloring $(X_i)_{i\in I}$ is equal in law to a uniform $q$-coloring of the constraint graph of a $\BMal_{t,u}$-distributed permutation of $I$, and
      \item the coloring $X$ is strictly $k$-dependent if and only if $(q,k,t)$ satisfies the tuning equation $qt[k]_t=[2]_t[k+1]_t$ \eqref{eq:qktAna}.
    \end{itemize}
    The claims in the first bullet follow from Lemma~\ref{lem:kolm} and the comments involving $\Joint$ preceding that lemma. The second bullet follows by combining Lemma~\ref{lem:kolm} with Proposition~\ref{lem:kDep} and Lemma~\ref{lem:converseTuning}.
  \end{proof}
  \end{samepage}

%%%%%%%%%%%%%%%%%%%%%%%%%%%
%%%%%%%%%%%%%%%%%%%%%%%%%%%
%%                       %%
%%       Section 4       %%
%%       Reversibility   %%
%%                       %%
%%%%%%%%%%%%%%%%%%%%%%%%%%%
%%%%%%%%%%%%%%%%%%%%%%%%%%%

\section{Reversibility}\label{sec:reverse}
  The primary purpose of this section is to prove the following.
  \begin{prop}\label{prop:reverseMain}
    For $q\geq 3$ and $t\in[0,1]$, if $(X_i)_{i\in\m Z}$ has law
    $\MalCol_{q,t}$, then so does $(X_{-i})_{i\in\m Z}$.
  \end{prop}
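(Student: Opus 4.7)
The plan is to reduce reversibility to a statement about the finite cylinder probabilities. By Lemma~\ref{lem:kolm} and Kolmogorov extension, it suffices to prove
\[
\ptcoln{n}(x)=\ptcoln{n}(x^R)\qquad \text{for every }n\geq 0 \text{ and every } x=x_1\cdots x_n\in\bbb{1,q}^n,
\]
where $x^R:=x_n\cdots x_1$. Since $\ptcoln{n}(x)=B_t(x)/\ztqn{n}$, the task reduces to the building-polynomial identity $B_t(x)=B_t(x^R)$.

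A first useful reduction uses the position-reversal bijection $\sigma\mapsto \sigma\circ w$ on $S_n$, where $w$ is the involution $i\mapsto n+1-i$. Because properness of a subword is reversal-invariant, one checks that $\sigma\vdash x$ if and only if $\sigma\circ w\vdash x^R$, and directly that $\inv(\sigma\circ w)=\binom{n}{2}-\inv(\sigma)$. Consequently
\[
B_t(x^R)=t^{\binom{n}{2}}B_{1/t}(x),
\]
so $B_t(x)=B_t(x^R)$ is equivalent to the assertion that $B_t(x)$ is a palindromic polynomial in $t$ of degree $\binom{n}{2}$.

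To prove palindromicity, I would group proper buildings by their constraint graph and write
\[
B_t(x)=\sum_{G}\mathbbm{1}\{x\text{ is a proper coloring of }G\}\,W_t(G),\qquad W_t(G):=\sum_{\sigma\in S_n:\,\graph_\sigma=G}t^{\inv(\sigma)},
\]
so it is enough to show $W_t(G)=t^{\binom{n}{2}}W_{1/t}(G)$ for every constraint graph $G$ on $\bbb{1,n}$. This is consistent with direct computation in small cases: for $n=3$ one finds $W_t(\text{path})=[4]_t$ and $W_t(\text{triangle})=t[2]_t$, and for $n=4$ the path yields $W_t=1+t+t^2+2t^3+t^4+t^5+t^6$, each palindromic of the right degree. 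The natural strategy is to construct, for each $G$, an involution $\iota_G$ on $\{\sigma:\graph_\sigma=G\}$ with $\inv(\iota_G(\sigma))=\binom{n}{2}-\inv(\sigma)$, built from the bubble decomposition of $G$ (Lemma~\ref{lem:constraintGraphDecomp}) by locally reversing the arrival order inside each bubble.

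The main obstacle is that neither $\sigma\mapsto w\sigma$ nor $\sigma\mapsto \sigma w$ preserves $\graph_\sigma$ in general, despite inverting $\inv$ correctly, so the involution must be assembled bubble-by-bubble with careful bookkeeping of how $\inv(\sigma)$ splits into contributions from pairs within a single bubble and pairs in distinct bubbles. An alternative, more recurrence-driven route is induction on $n$ via~\eqref{eq:Brecurrence}: assuming palindromicity of $B_t$ on words of length $<n$, palindromicity for $x$ reduces to the single identity
\[
\sum_{i=1}^{n}\bigl(t^{n-i}-t^{i-1}\bigr)B_t(\hat x_i)=0 \qquad\text{for every proper word }x,
\]
which may be attacked using~\eqref{eq:Brecurrence2} together with $t$-binomial manipulations in the spirit of the proof of Proposition~\ref{lem:kDep}. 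Either approach is substantial, in line with the introduction's warning that reversibility requires a highly non-trivial combinatorial argument.
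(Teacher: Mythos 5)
Your reduction is correct and appealing: reversibility is equivalent to $P^{\col}_{t,q,n}(x)=P^{\col}_{t,q,n}(x^R)$ for all cylinders, hence to $B_t(x)=B_t(x^R)$, and the position-reversal bijection $\sigma\mapsto\sigma\circ w$ does satisfy $\sigma\vdash x\iff\sigma\circ w\vdash x^R$ and $\inv(\sigma\circ w)=\binom{n}{2}-\inv(\sigma)$, so the claim becomes palindromicity of $B_t(x)$ about degree $\binom{n}{2}$ (this is precisely the content of open problem (v) at the level of generating polynomials). The gap is that neither of your two routes to palindromicity works as stated. The key claim of your first route --- that $W_t(G)=t^{\binom{n}{2}}W_{1/t}(G)$ for \emph{every} constraint graph $G$ --- is false. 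Take $n=4$ and $G$ the path $1\!-\!2\!-\!3\!-\!4$ with the extra chord $\{1,3\}$: the permutations with $\graph_\sigma=G$ are (written as arrival-time sequences) $1324,\,1423,\,2314,\,2413,\,3412,\,3421$, giving $W_t(G)=t+2t^2+t^3+t^4+t^5$, which is not palindromic about degree $6$. Your small-case checks only involved reflection-symmetric graphs (the path and the triangle), for which palindromicity is automatic via $\sigma\mapsto\sigma\circ w$; for asymmetric constraint graphs it fails, which is exactly the subtlety the paper emphasizes (the constraint graph is \emph{not} reflection-symmetric in law even though the coloring is, and $\BMal_{t,u}$ lacks the inversion symmetry of $\Mal_t$). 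Consequently no involution $\iota_G$ preserving $\graph_\sigma$ and complementing $\inv$ can exist in general, and the bubble-by-bubble construction you sketch cannot be completed.

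Your second route is algebraically sound as a reduction, but the residual identity $\sum_{i=1}^n\bigl(t^{n-i}-t^{i-1}\bigr)B_t(\hat x_i)=0$ for proper $x$ is left entirely unproved, and it is essentially equivalent in strength to the statement being proved: it asserts that the ``build from the right'' recurrence \eqref{eq:Brecurrence} agrees with its left-handed mirror, which the paper obtains only as a \emph{consequence} of reversibility (see the proof of Corollary~\ref{cor:1depqcol}). Unlike the proof of Proposition~\ref{lem:kDep}, there is no tuning equation available here to kill the offending terms, since reversibility must hold for all $t\in[0,1]$ and $q\geq3$. The paper's actual argument is of a different nature: it works with insertion codes, realizes $\BMal$ (via Lemma~\ref{lem:LehmerBub}) as a product of truncated geometric laws, and interpolates between $\sG_0\otimes\cdots\otimes\sG_n$ and $\overline{\sG_0}\otimes\cdots\otimes\overline{\sG_n}$ one coordinate at a time; the combinatorial heart is Lemma~\ref{lem:revBij}, proved by an involution on ``almost proper'' buildings that swaps the arrival times of the two equal characters, together with Lemma~\ref{lem:lehmerSwap}. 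Some input of this kind is still needed to close your argument, so as it stands the proposal does not constitute a proof.
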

  This proposition plays an auxiliary role in the proof of Theorem~\ref{main},
  and its proof is technical. Readers eager for the proof of the main theorem
  may safely proceed to the next section.

  The secondary purpose of this section is to justify the following claim from
  the introduction. In fact, it is a corollary of the previous result.
  \begin{cor}\label{cor:1depqcol}
    For each $q\geq 4$, there exists a unique $t\in [0,1]$ such that $(q,1,t)$
    satisfies \eqref{eq:qktAna}. For this $t$ the symmetric $1$-dependent
    $q$-colorings from \cite{HL2} have law $\MalCol_{q,t}$.
  \end{cor}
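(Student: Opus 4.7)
The plan is to prove the corollary in two stages: first establish the uniqueness of $t$ by direct computation, and then identify the law $\MalCol_{q,t}$ with the $1$-dependent $q$-coloring of \cite{HL2} by matching characterizing properties. Specializing the tuning equation \eqref{eq:qktAna} to $k=1$, and using $[1]_t=1$ and $[2]_t=1+t$, it becomes $qt=(1+t)^2$, equivalently $t^2-(q-2)t+1=0$. By Vi\`ete the two roots have product $1$, so they are reciprocals; the discriminant $(q-2)^2-4$ is nonnegative precisely when $q\geq 4$, and in that case exactly one real root lies in $(0,1]$, namely $t=\tfrac{1}{2}\bigl((q-2)-\sqrt{(q-2)^2-4}\bigr)$, which degenerates to $t=1$ only when $q=4$. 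This settles the first assertion of the corollary.

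For the second assertion, I will verify that $\MalCol_{q,t}$ at this value of $t$ enjoys all the structural properties that characterize the coloring of \cite{HL2}. By Proposition~\ref{prop:sec3main} the measure $\MalCol_{q,t}$ is a stationary $1$-dependent proper $q$-coloring of $\m Z$. Color-permutation symmetry is immediate from the construction: under the joint measure $\Joint$, the conditional law of the coloring $x$ given the permutation $\sigma$ is uniform on proper $q$-colorings of $\Gamma_\sigma$, and this conditional law is manifestly invariant under any permutation of the $q$ colors. Reversibility is supplied by Proposition~\ref{prop:reverseMain}. Hence $\MalCol_{q,t}$ is a stationary, color-symmetric, reversible, $1$-dependent proper $q$-coloring of $\m Z$, which is exactly the list of properties enjoyed by the coloring of \cite{HL2}.

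The final step, and the step I expect to be the main obstacle, is to deduce from this agreement of properties that the two laws actually coincide. The route I would take is to write down a closed system of recurrences for the cylinder probabilities of any stationary, color-symmetric, $1$-dependent proper $q$-coloring of $\m Z$: stationarity, color symmetry and $1$-dependence collapse the $n$-cylinder probability into a function of the pattern of consecutive equalities among entries, and a variant of the recursion \eqref{eq:Brecurrence2} in Lemma~\ref{lem:recurrenceForProb}, transferred to $P^{\col}_{t,q,n}$ via \eqref{eq:pcol}, provides a common recursion. Verifying that both the explicit cylinder formulas of \cite{HL2} and the probabilities $P^{\col}_{t,q,n}$ solve this recurrence with matching low-order data forces the two laws to agree. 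Here the matching of low-order data reduces to identifying the single free parameter of either family with the common $t$, which is pinned down by the uniqueness established in the first step together with the requirement that the coloring be genuinely (strictly) $1$-dependent.
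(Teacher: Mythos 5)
Your first step (uniqueness of $t$) is fine and matches the paper: with $k=1$ the tuning equation becomes $qt=(1+t)^2$, i.e.\ $t+t^{-1}=q-2$ (the paper writes this as $t^{1/2}+t^{-1/2}=\sqrt q$), which has a unique root in $(0,1]$ exactly when $q\geq 4$. The second step, however, contains a genuine gap. A stationary, reversible, color-symmetric, $1$-dependent proper $q$-coloring is \emph{not} known to be unique, so "agreement of properties" cannot force agreement of laws; indeed, whether there exist color-symmetric $k$-dependent $q$-colorings other than the Mallows ones is listed as an open problem in Section~\ref{sec:open} of this very paper. Likewise, your fallback claim that stationarity, color symmetry and $1$-dependence by themselves yield a closed recurrence determining the cylinder probabilities is unjustified for the same reason: the recurrences \eqref{eq:Brecurrence} and \eqref{eq:Brecurrence2} are consequences of the specific Mallows construction (they are statements about $B_t$, i.e.\ sums over proper buildings), not of the abstract distributional properties, so you cannot assert that the \cite{HL2} law satisfies them without proof.

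The paper's actual identification works in the opposite direction and uses reversibility as a computational ingredient, not merely as a matching property. The colorings of \cite{HL2} are \emph{defined} by a recurrence with coefficients symmetric under $i\mapsto n-i$ (their Eq.~2.2), whereas Lemma~\ref{lem:recurrenceForProb} gives only the asymmetric recurrence $B_t(x)=\sum_i t^{n-i}B_t(\hat x_i)$ for proper $x$. The key step is to invoke Corollary~\ref{cor:obv} (the reversibility statement proved in Section~\ref{sec:reverse}) to get the mirrored recurrence $B_t(x)=\sum_i t^{i}B_t(\hat x_i)$, and then average the two to obtain
\begin{equation*}
  B_t(x)=\sum_{i}\Bigl(\tfrac{t^{n-i}+t^{i}}{2}\Bigr)B_t(\hat x_i),
\end{equation*}
whose coefficients, after using $t^{1/2}+t^{-1/2}=\sqrt q$, coincide with those of the defining recurrence of \cite{HL2}. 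Since that recurrence (with the trivial initial data) determines the \cite{HL2} cylinder probabilities, and $P^{\col}_{t,q,n}$ is a normalization of $B_t$, the laws coincide. Your write-up never derives this symmetrized recurrence and never explains why the \cite{HL2} law would satisfy any recurrence shared with $P^{\col}$; supplying exactly this bridge is what the proof requires.
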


  Throughout this section we fix a finite interval $I$ of $\m Z$ and a word $x$
  indexed by $I$. Recall from Section~\ref{sec:background} that a permutation
  $\sigma$ of $I$ is a proper building of $x$ if and only if each of the
  subwords
  $$
    x^{\sigma}(t)=\bigl(x_i\colon \sigma(i)\leq t\bigr),\qquad t\in I
  $$
  is proper. The word $x^{\sigma}(t)$ is obtained from $x^{\sigma}(t-1)$ by
  inserting $x_{\sigma^{-1}(t)}$ at position $\sLtilde(\sigma)_t$ from the right.

  For $i,j\in I$ we write $(i\ j)$ for the permutation transposing $i$ and $j$. We also define $\Delta_k(\sigma):=\sLtilde(\sigma)_{k+1}-\sLtilde(\sigma)_k$.
  \begin{lemma}\label{lem:revBij}
    Let $I$ be a finite interval of $\m Z$ and let $k$ be an integer such
    that $k,k+1\in I$.
    Let $\delta$ be an integer and let $
    \ell=(\ell_i\colon i\in I\setminus\{k,k+1\})$ be a sequence.
    Let $A_{\delta,\ell}$ be the set of permutations $\sigma$ of $I$ such that
    restriction of $\sLtilde(\sigma)$ to $I\setminus\{k,k+1\}$ is $\ell$ and
    $\Delta_k(\sigma)=\delta$.
    Then for all words $x\in\bbb{1,q}^I$,
    \begin{equation}\label{eq:revBij}
      \#\bigl\{\sigma\in A_{\delta,\ell}\colon \sigma\vdash x\bigr\}=
      \#\bigl\{\sigma\in A_{\delta,\ell}\colon (k\ k+1)\circ\sigma\vdash x\bigr\}.
    \end{equation}
  \end{lemma}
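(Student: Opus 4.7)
The plan is to establish an explicit bijection between the two sets in~\eqref{eq:revBij}, based on parametrizing elements of $A_{\delta,\ell}$ by pairs of positions in a fixed intermediate word.

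I would first show that the word $y := x^\sigma(k+1)$, and more generally $x^\sigma(t)$ for all $t \geq k+1$, is common to every $\sigma \in A_{\delta,\ell}$, depending only on $\ell$ and $x$. This follows from running the insertion process in reverse from $x$ at time $\max I$, iteratively removing the character at position $\ell_t$ from the right at each time $t$ down to $k+2$. Consequently, the properness of $x^\sigma(t)$ for $t > k+1$ is a condition determined by $(\ell,x)$ alone, and may be assumed to hold (else both sides of~\eqref{eq:revBij} vanish).

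Next, I would parametrize $\sigma \in A_{\delta,\ell}$ by the unordered pair $P = \{p_1, p_2\}$ of positions in $y$ (indexed from the right) occupied by the two characters $c_1 := x_{\sigma^{-1}(k)}$ and $c_2 := x_{\sigma^{-1}(k+1)}$. A careful analysis of the insertion positions $a = \sLtilde(\sigma)_k$ and $b = \sLtilde(\sigma)_{k+1}$ shows that the two orderings of $P$ correspond to the two $\Delta_k$-values $\delta$ and $1-\delta$, and that applying $(k\ k+1)$ flips the orientation while preserving $P$. Hence $A_{\delta,\ell}$ and $A_{1-\delta,\ell}$ are parametrized by the same collection of unordered pairs $P$, distinguished only by orientation.

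I would then introduce the self-bijection $\sigma \mapsto \sigma''$ of $A_{\delta,\ell}$ by reflecting $P = \{p,q\}$ through the center of $y$ to obtain $P'' = \{|y|-1-q, \; |y|-1-p\}$, with the same $\delta$-orientation. The lemma reduces to the constraint-graph identity
$$
  \Gamma_\sigma = \Gamma_{(k\ k+1)\circ\sigma''}
$$
on the vertex set $I$, since $\sigma \vdash x$ iff $x$ is a proper coloring of $\Gamma_\sigma$ (cf.\ Section~\ref{sec:background}).

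The hard part will be establishing this constraint-graph identity. Both the reflection $P \mapsto P''$ and the transposition $(k\ k+1)$ can alter edges incident to the integers $\sigma^{-1}(k)$ and $\sigma^{-1}(k+1)$, and one must check that their combined effect cancels. The plan is to exploit the bubble decomposition of $\Gamma_\sigma$ from Section~\ref{sec:background}: edges outside the bubble containing $\sigma^{-1}(k)$ and $\sigma^{-1}(k+1)$ are unchanged by either operation, while the edges within that bubble admit a reflection-invariant description that is also stable under the transposition. A case analysis according to the sign of $\delta$ should complete the verification.
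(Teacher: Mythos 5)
Your preliminary steps are fine: the word $y=x^\sigma(k+1)$ (and all $x^\sigma(t)$ for $t\geq k+1$) is indeed determined by $(\ell,x)$, and parametrizing $\sigma\in A_{\delta,\ell}$ by the unordered pair of positions of the time-$k$ and time-$(k+1)$ characters in $y$, with $\delta$ encoding the orientation, is correct. The fatal problem is the reduction: you propose to find a self-bijection $\sigma\mapsto\sigma''$ of $A_{\delta,\ell}$ with $\Gamma_\sigma=\Gamma_{(k\ k+1)\circ\sigma''}$ as graphs. No such bijection can exist in general, for \emph{any} choice of $\sigma''$, not just the reflection: its existence would force the multiset equality \eqref{eq:revBijGraph} for every graph $G$, and the paper's remark after the lemma gives an explicit counterexample ($I=\bbb{1,4}$, $k=3$, $\delta=\ell_1=\ell_2=0$), where $A_{0,\ell}=\{1234,1342,3412\}$ has constraint graphs $\{$path, path$\cup\{(2,4),(1,4)\}$, path$\cup\{(1,3)\}\}$ while the transposed permutations have graphs $\{$path$\cup\{(2,4)\}$, path$\cup\{(1,3),(1,4)\}$, path$\}$; these multisets differ. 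Concretely, your reflection map fixes the middle permutation $1342$, and $\Gamma_{1342}\neq\Gamma_{(3\,4)\circ 1342}$. The deeper reason your reflection cannot work is that constraint graphs are not reflection-symmetric (the paper emphasizes this asymmetry; it is why Section~\ref{sec:reverse} is nontrivial at all): reflecting the insertion positions produces at best the mirror image of the graph, and a mirror-image graph does not admit the same \emph{fixed} word $x$ as a proper coloring. So the identity \eqref{eq:revBij} genuinely cannot be proved at the level of graphs alone; it holds only in aggregate against the specific word $x$.

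For comparison, the paper's proof avoids any graph-level matching. It sets $E=\{\sigma:\sigma\vdash x\}$, introduces the set $P$ of permutations that are ``almost proper'' (proper at all times $t\neq k$), notes $E,E'\subseteq P$ and $P'=P$, and reduces \eqref{eq:revBij} to $\#A_{\delta,\ell}\cap(P\setminus E)=\#A_{1-\delta,\ell}\cap(P\setminus E)$ using $(A_{\delta,\ell})'=A_{1-\delta,\ell}$ (Lemma~\ref{lem:lehmerSwap}). It then builds an involution of $P\setminus E$ that swaps the arrival times of the two equal adjacent characters $x_r=x_s$ in the non-proper word $x^\sigma(k)$; this involution preserves $\ell$ and exchanges $\delta$ with $1-\delta$. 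Note that this involution depends essentially on where the repeated character of $x$ sits, i.e.\ on the word itself — information invisible to the constraint graph — which is exactly the ingredient your approach lacks. If you want to salvage your set-up, you would need to replace the graph identity by a word-dependent cancellation of this kind rather than an edge-by-edge equality.
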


  We remark that if it were the case that for all graphs $G$
  \begin{equation}\label{eq:revBijGraph}
    \#\{\sigma\in A_{\delta,\ell}\colon \Gamma_\sigma=G\}=
    \#\{\sigma\in A_{\delta,\ell}\colon \Gamma_{(k\ k+1)\circ\sigma}=G\},
  \end{equation}
  then \eqref{eq:revBij} would follow immediately, since $\sigma\vdash x$ is
  equivalent to the assertion that $x$ is a proper coloring of the graph
  $\Gamma_\sigma$.
  As we will see in the proof, \eqref{eq:revBijGraph} does hold in many cases
  but not all.
  For example, it does not hold in the case ${I=\bbb{1,4}}$, $k=3$, and
  ${\delta=\ell_1=\ell_2=0}$ with the graph
  $$
    G=(I,E),\qquad \text{where}\qquad E=\{(1,2),(2,3),(1,3),(3,4),(1,4)\}.
  $$

  We will use the following result in the proof of the lemma.
  \begin{lemma}\label{lem:lehmerSwap}
    Let $I$ be a finite interval, let $k$ be an integer such that
    $\min I\leq k<\max I$,
    and let $\sigma$ be a permutation of $I$. Then
    \begin{alignat*}{4}
      \sLtilde\bigl((k\ k+1)\circ\sigma\bigr) &=\bigl(\ldots,\ell_{k-1},\ &&\ell_{k+1}-\mathbbm{1}[\ell_{k+1}>\ell_k],\ &&\ell_{k}+\mathbbm{1}[\ell_{k+1}\leq\ell_k],\ &&\ell_{k+2},\ldots\bigr),\\
      \text{where}\quad \sLtilde(\sigma)      &=\bigl(\ldots,\ell_{k-1},\ &&\ell_k,                                     &&\ell_{k+1},                                  &&\ell_{k+2},\ldots).
    \end{alignat*}
    In particular, $\Delta_{k}\bigl((k\ k+1)\circ\sigma\bigr)=1-\Delta_k(\sigma)$.
    %$$\Delta_k(\sigma)+\Delta_{k}\bigl((k\ k+1)\circ\sigma\bigr)=1.$$
  \end{lemma}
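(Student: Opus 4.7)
The plan is to reduce to a direct case analysis on the relative order of $a := \sigma^{-1}(k)$ and $b := \sigma^{-1}(k+1)$, the two elements of $I$ whose arrival times are swapped by $\tau := (k\ k+1)\circ\sigma$. First I would rewrite the definition as
$$
\sLtilde(\sigma)_t = \#\bigl\{m \in \sigma^{-1}(\{1,\ldots,t-1\}) : m > \sigma^{-1}(t)\bigr\},
$$
which depends only on $\sigma^{-1}(t)$ and on the \emph{set} $\sigma^{-1}(\{1, \ldots, t-1\})$. Both of these are unchanged under $\tau$ when $t \in I \setminus\{k,k+1\}$: for $t < k$ this is trivial, and for $t > k+1$ both sets contain $\{a,b\}$ along with the common other elements. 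Hence $\sLtilde(\tau)_t = \sLtilde(\sigma)_t$ at all such $t$, leaving only the two entries at $k$ and $k+1$ to compute.

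For these, set $P := \sigma^{-1}(\{1, \ldots, k-1\})$, and note that $a, b \notin P$. By the formula above,
$$
\ell_k = \#\{m \in P : m > a\}, \qquad \ell_{k+1} = \#\{m \in P \cup \{a\} : m > b\},
$$
$$
\sLtilde(\tau)_k = \#\{m \in P : m > b\}, \qquad \sLtilde(\tau)_{k+1} = \#\{m \in P \cup \{b\} : m > a\}.
$$
Applying the elementary identity $\#\{m \in P \cup \{x\} : m > y\} = \#\{m \in P : m > y\} + \mathbbm{1}[x > y]$, I would split into the two cases $a < b$ and $a > b$. In the case $a < b$ one finds $\ell_{k+1} = \#\{m \in P : m > b\} \leq \ell_k$, so $\mathbbm{1}[\ell_{k+1} > \ell_k] = 0$, and the expressions above immediately give $\sLtilde(\tau)_k = \ell_{k+1}$ and $\sLtilde(\tau)_{k+1} = \ell_k + 1$, matching the stated formula. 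In the case $a > b$ one finds $\ell_{k+1} = \#\{m \in P : m > b\} + 1 > \ell_k$, so $\mathbbm{1}[\ell_{k+1} > \ell_k] = 1$, yielding $\sLtilde(\tau)_k = \ell_{k+1} - 1$ and $\sLtilde(\tau)_{k+1} = \ell_k$, again matching the stated formula.

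The auxiliary identity $\Delta_k(\tau) = 1 - \Delta_k(\sigma)$ then follows by subtraction, using $\mathbbm{1}[\ell_{k+1} \leq \ell_k] + \mathbbm{1}[\ell_{k+1} > \ell_k] = 1$. I do not anticipate any serious obstacle beyond careful bookkeeping of the $\pm 1$ contributions that arise when one swapped element lands at the same insertion-position as the other; splitting by the sign of $a-b$ makes these contributions transparent, since this sign is precisely what governs whether $\ell_{k+1}$ exceeds $\ell_k$ or not.
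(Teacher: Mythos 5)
Your proposal is correct. The paper in fact omits the proof of this lemma, declaring it straightforward, and your argument is exactly the natural direct verification one would expect: rewriting $\sLtilde(\sigma)_t$ as the number of already-arrived elements lying to the right of the newly arrived element, observing that the entries away from $\{k,k+1\}$ are untouched because both the arriving element and the set of prior arrivals are unchanged, and then handling the two entries at $k$ and $k+1$ by the case split on whether $a=\sigma^{-1}(k)$ lies left or right of $b=\sigma^{-1}(k+1)$, which is precisely what governs the indicator $\mathbbm{1}[\ell_{k+1}>\ell_k]$. The bookkeeping in both cases checks out, and the identity $\Delta_k\bigl((k\ k+1)\circ\sigma\bigr)=1-\Delta_k(\sigma)$ follows by subtraction as you say; the only cosmetic point is that writing $\sigma^{-1}(\{1,\ldots,t-1\})$ tacitly assumes $I$ starts at $1$, so you should phrase it as $\sigma^{-1}(\{s\in I\colon s<t\})$ for a general finite interval $I$.
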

  The proof is straightforward and is omitted.

  \begin{proof}[Proof of Lemma~\ref{lem:revBij}]
    Fix $x$, $k$, $\delta$, and $\ell$.
    For any permutation $\sigma$ of $I$ and any set $S$ of such permutations, we write
    $$
      \sigma':=(k\ k+1)\circ \sigma,\qquad S'=\{\sigma'\colon \sigma\in S\}.
    $$
    Note that $\sigma''=\sigma$ and $\# S=\# S'$.
    The only difference between $\sigma$ and $\sigma'$ is that the integers
    arriving at times $k$ and $k+1$ are interchanged.
    In particular, $x^\sigma(t)=x^{\sigma'}(t)$ for all $t\not=k$.
    Let $E$ be the set of permutations of $I$ that are proper buildings of $x$.
    Using this notation, \eqref{eq:revBij} may be expressed as
    \begin{equation}\label{eq:card}
      \# A_{\delta, \ell}\cap E  = \# A_{\delta, \ell}\cap E'.
    \end{equation}

    Say that a permutation $\sigma$ is \textbf{almost proper} (at $k$, with
    respect to $x$) if $x^\sigma(t)$ is a proper word for all $t\not=k$, and
    let $P$ be the set of almost proper permutations. Observe that
    $E\subseteq P$ and that $P'=P$, from which it follows that
    $E'\subseteq P$ as well.

    Since $\# A_{\delta, \ell}\cap (P\setminus E ) = %
    \# A_{\delta, \ell}\cap P -  \# A_{\delta, \ell}\cap E $ and similarly
    with $E'$ in place of $E$, \eqref{eq:card} is equivalent to
    \begin{equation}\label{eq:card2}
      \# A_{\delta, \ell}\cap (P\setminus E ) =
      \# A_{\delta, \ell}\cap (P\setminus E' ).
    \end{equation}

    It follows from Lemma~\ref{lem:lehmerSwap} that $(A_{\delta,\ell})' = %
    A_{1-\delta,\ell}$. Thus the set ${\bigl(A_{\delta,\ell}\cap (P\setminus E')\bigr)'}$
    is equal to ${A_{1-\delta, \ell}\cap (P \setminus E )}$, and therefore
    $\# A_{\delta,\ell}\cap (P\setminus E')=\# A_{1-\delta, \ell}\cap (P \setminus E )$.
    Substituting this into equation
    \eqref{eq:card2}, we see that it is equivalent to
    \begin{equation}\label{eq:card3}
      \# A_{\delta, \ell}\cap (P\setminus E ) =
      \# A_{1-\delta, \ell}\cap (P\setminus E ).
    \end{equation}

    We establish \eqref{eq:card3} by exhibiting an involution of the set
    $F:=P\setminus E$ that leaves $\ell$ fixed and interchanges $\delta$ with
    $1-\delta$. Observe that $F$ is the set of permutations $\sigma$ that are
    almost proper but $x^\sigma(k)$ is non-proper.
    Thus for $\sigma\in F$ the word $x^\sigma(k)$ is non-proper but becomes
    proper after deleting a single character.
    This means that there exists a unique pair of integers $r=r^\sigma$ and
    $s=s^\sigma$ such that $r<s$ and $x_r=x_s$ and $r,s$ occur as consecutive
    indices in the sequence $\bigl(m\colon \sigma(m)\leq k\bigr)$ that indexes
    $x^\sigma(k)$.
    Observe that either $\sigma(r)=k$ or $\sigma(s)=k$.
    Let $a=\sigma^{-1}(k+1)$ denote the integer arriving at time $k+1$.
    Since $x^\sigma(k+1)$ is proper, $x_a$ must be inserted between $x_r$ and $x_s$.
    (This implies, in particular, that $\Delta_k(\sigma)\in \{0,1\}$.)

    Define the function $f$ on $F$ via
    $$
      f(\sigma)=\sigma\circ (r^\sigma\ s^\sigma),\qquad \sigma\in F.
    $$
    That is, $f$ interchanges the arrival times of $r$ and $s$ in $\sigma$.
    Since $x_{r^\sigma}=x_{s^\sigma}$ for $\sigma\in F$, it follows that
    ${x^\sigma(t)=x^{f(\sigma)}(t)}$ for all $t\in I$. Thus $f(\sigma)\in F$
    as well. Furthermore $r^{f(\sigma)}=r^{\sigma}$ and similarly for $s$,
    from which it follows that $f$ is involutive.
    All that remains is to verify that $f$ preserves $\ell$ and interchanges
    $\delta$ with $1-\delta$, that is,
    \begin{equation}\label{eq:LtildeEq}
      \sLtilde\bigl(f(\sigma)\bigr)_{i} =
      \sLtilde(\sigma)_{i},\qquad
      i\in I\setminus \{k,k+1\},\ \sigma\in F,
    \end{equation}
    and
    \begin{equation}\label{eq:DeltaK}
      \Delta_k\bigl(f(\sigma)\bigr)=1-\Delta_k(\sigma),\qquad \sigma\in F.
    \end{equation}
    Suppose, for the sake of concreteness, that $\sigma(j)=k$; the other case
    $\sigma(i)=k$ is similar.
    The subwords of $x$ built by $\sigma$ and $f(\sigma)$ at times $k-1$, $k$, and $k+1$ are then
    \begin{alignat*}{4}
       x^\sigma(k-1) &=u\ x_i\ v, \quad & x^\sigma(k)&=u\ x_i\ \text{{\Large \textcircled{\normalsize$x_j$}}}\ \boxed{v},\qquad & x^\sigma(k)&=u\ x_i\ \text{{\Large \textcircled{\normalsize$x_a$}}}\ \boxed{x_j\ v},\qquad &\\
       x^{f(\sigma)}(k-1) &=u\ x_j\ v, \quad & x^{f(\sigma)}(k)&=u\ \text{{\Large \textcircled{\normalsize$x_i$}}}\ \boxed{x_j\ v},\qquad & x^{f(\sigma)}(k)&=u\ x_i\ \text{{\Large \textcircled{\normalsize$x_a$}}}\ \boxed{x_j\ v},\qquad &
    \end{alignat*}
    where we have circled the character that was just inserted, boxed the
    subword to its right, and set $u=\bigl(x_m\colon \sigma(m)<k,\ k<i\bigr)\hspace{0.5em}\text{and}
    \hspace{0.5em}v=\bigl(x_m\colon \sigma(m)<k,\ m>j\bigr).$
    Thus when $\sigma(j)=k$, we have that
    $$
      \Delta_k(\sigma)=|x_jv|-|v|=1\hspace{0.5em}\text{and}
      \hspace{0.5em}\Delta_k\bigl(f(\sigma)\bigr)=|x_jv|-|x_jv|=0.
    $$
    Similarly when $\sigma(i)=k$, we have that
    $$
      \Delta_k(\sigma)=|x_jv|-|x_jv|=0\hspace{0.5em}\text{and}
      \hspace{0.5em}\Delta_k\bigl(f(\sigma)\bigr)=|x_jv|-|v|=1.
    $$
    This establishes \eqref{eq:DeltaK}.
    Moreover \eqref{eq:LtildeEq} clearly holds when $i>k+1$, and for $i<k$ it
    follows since $x_r$ and $x_s$ occupy the same relative positions in the
    respective subwords $x^\sigma(i)$ and $x^{f(\sigma)}(i)$.
  \end{proof}

  Fix $n\geq 0$. For a word $x=(x_i)_{i=0}^n$, we write $\overline x=(x_{n-i})_{i=0}^n$ for its reversal. For a permutation $\sigma\in \Sym(\bbb{0,n})$, its reversal is the permutation $\overline\sigma$ with $\overline\sigma(i)=\sigma(n-i)$.
  \begin{lemma}\label{cor:malProb}
    Let $\sigma$ be a Mallows-distributed permutation of $\bbb{0,n}$. Then
    $$\m P(\sigma\vdash x)=\m P(\overline\sigma\vdash x),\qquad \forall x\in\bbb{1,q}^{\bbb{0,n}}.$$
  \end{lemma}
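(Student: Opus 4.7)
My plan is to reduce the probabilistic claim to a combinatorial palindrome identity for the building polynomial $B_t(y) := \sum_\tau \mathbbm{1}[\tau \vdash y]\, t^{\inv \tau}$.  The first step is to establish the pointwise equivalence $\overline\sigma \vdash x \iff \sigma \vdash \overline x$.  Unwinding $\overline\sigma(i) = \sigma(n-i)$ and reindexing the positions defining each subword, I would check that $x^{\overline\sigma}(t) = \overline{(\overline x)^\sigma(t)}$ for every $t \in \bbb{0,n}$; since properness is preserved under reversal, the equivalence follows.  Consequently $\m P(\overline\sigma \vdash x) = \m P(\sigma \vdash \overline x) = B_t(\overline x)/[n+1]^!_t$, and the claim reduces to $B_t(x) = B_t(\overline x)$.

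The second step uses the bijection $\sigma \mapsto \overline\sigma$ on $\Sym(\bbb{0,n})$, which satisfies $\inv(\overline\sigma) = \binom{n+1}{2} - \inv(\sigma)$ (by substituting $(i,j) \mapsto (n-j,n-i)$ in the set defining $\inv$).  Substituting $\sigma = \overline\tau$ in the sum defining $B_t(\overline x)$ and using the equivalence from the first step yields the reciprocity
\[
  B_t(\overline x) \;=\; t^{\binom{n+1}{2}}\, B_{1/t}(x),
\]
so $B_t(x) = B_t(\overline x)$ is equivalent to the palindromicity $B_t(x) = t^{\binom{n+1}{2}}\, B_{1/t}(x)$.

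The third step proves this palindromicity by induction on $n$, and contains the main obstacle.  Assuming palindromicity for all words of length at most $n$, I apply the recurrence \eqref{eq:Brecurrence} of Lemma~\ref{lem:recurrenceForProb} to both sides of the palindrome identity; using the elementary identity $\binom{n+1}{2} - \binom{n}{2} = n$ and the inductive hypothesis applied termwise to each $B_t(\hat x_i)$, palindromicity of $B_t(x)$ reduces to the polynomial identity
\[
  \sum_{i=1}^{n+1} \bigl(t^{(n+1)-i} - t^{i-1}\bigr)\, B_t(\hat x_i) \;=\; 0 \qquad (x \text{ proper}).
\]
To prove this identity I plan to deploy Lemma~\ref{lem:revBij}: for each $k$ that lemma equates the number of buildings of $x$ with $\Delta_k = 0$ and with $\Delta_k = 1$ within each slice $A_{\delta,\ell}$, via an explicit involution on almost-proper (but not necessarily proper) buildings.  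I would aggregate these slice-level equalities weighted by $t^{\inv\sigma}$, exploiting the product form of the insertion-code distribution under the Mallows measure, so that the unweighted counting equalities upgrade to the weighted identity above.  The most delicate aspect is that the involution in the proof of Lemma~\ref{lem:revBij} may shift $\sLtilde(\sigma)_k$ and $\sLtilde(\sigma)_{k+1}$ individually even when the rest of $\sLtilde(\sigma)$ is held fixed, so I would refine the partition of buildings accordingly so that the weighted pairing is compatible, and I expect this bookkeeping to be the technical crux of the argument.
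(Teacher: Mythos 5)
Your Steps 1 and 2 are correct and give a clean reformulation: the lemma is equivalent to $B_t(x)=B_t(\overline x)$, equivalently to the palindromicity $B_t(x)=t^{\binom{n+1}{2}}B_{1/t}(x)$, and the inductive reduction to the identity $\sum_{i}\bigl(t^{m-i}-t^{i-1}\bigr)B_t(\hat x_i)=0$ for proper $x$ is sound algebra. But observe that this identity is just $B_t(x)=B_t(\overline x)$ in disguise: it says that $B_t$ satisfies the reflected deletion recurrence, which the paper obtains only as a \emph{consequence} of this lemma (via Corollary~\ref{cor:obv}, in the proof of Corollary~\ref{cor:1depqcol}). So the induction does not reduce the difficulty, and everything rests on your third step, which has a genuine gap. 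The plan to aggregate the slice-level equalities of Lemma~\ref{lem:revBij} with weights $t^{\inv\sigma}$ cannot be repaired by merely refining the partition: the involution $f$ in the proof of that lemma does not preserve $\inv$. In the paper's own computation the pair $\bigl(\sLtilde(\sigma)_k,\sLtilde(\sigma)_{k+1}\bigr)$ moves from $(|v|,|v|+1)$ to $(|v|+1,|v|+1)$, so $\inv$ changes by exactly one and the weight is multiplied by $t^{\pm1}$ under the pairing. The slices $A_{\delta,\ell}$ fix the \emph{difference} of the two free code entries, whereas $t^{\inv\sigma}$ is constant on a slice only if their \emph{sum} is fixed; these constraints are incompatible, so the unweighted counts of Lemma~\ref{lem:revBij} do not upgrade to $t^{\inv}$-weighted counts by any refinement. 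You also do not explain how swaps of the single pair of times $k,k+1$ would compose to yield the global left/right reflection in your identity.

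The paper's proof supplies exactly the missing mechanism, and it is the heart of Section~\ref{sec:reverse} rather than bookkeeping: one views $\sLtilde(\sigma)$ as having the product law $\sG_0\otimes\cdots\otimes\sG_n$ of truncated geometrics and $\sLtilde(\overline\sigma)$ as having the coordinatewise reflected law $\overline{\sG_0}\otimes\cdots\otimes\overline{\sG_n}$, and interpolates between them by a chain of adjacent swaps. Under the mixed measure $\overline{\sG_k}\otimes\sG_{k+1}$ the weight of a configuration depends only on the difference of the two entries, so the pair is conditionally uniform given $\Delta_k$ and the rest of the code; this is what converts the unweighted statement of Lemma~\ref{lem:revBij} into an equality of measures, and Lemma~\ref{lem:lehmerSwap} then transports $\overline{\sG_k}\otimes\sG_{k+1}$ to $\sG_k\otimes\overline{\sG_{k+1}}$, letting the reflection migrate through all coordinates. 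You would need this device, or an equally substantive replacement such as an explicit inversion-complementing bijection on buildings, to finish your Step 3; indeed open problem (v) in Section~\ref{sec:open} asks precisely for such a bijection realizing the palindromicity you reduced to, a strong indication that what you have deferred as the ``technical crux'' is the actual content of the lemma.
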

  \begin{proof}
    Write $\sG_i$ and $\overline{\sG}_i$ for the respective laws of $X_i$ and $i-X_i$, where $0\leq i\leq n$ and $X_i$ is an $i$-truncated, $t$-geometric random variable. By Lemma~\ref{lem:LehmerBub}, the random sequences $\sLtilde(\sigma)$ and $\sLtilde(\overline{\sigma})$ have laws $\sG_0\otimes \sG_1\otimes \cdots\otimes \sG_n$ and $\overline{\sG_0}\otimes \cdots \otimes \overline{\sG_n}$, respectively. Fix a word $x$ and let $B_x$ denote the set of tuples $\ell\in\{0\}\times \cdots \times \bbb{0,n}$ such that $\sDtilde(\ell)$ is a proper building of $x$. The desired result is equivalent to
    \begin{equation}\label{eq:prodGeomLaw}
      \sG_0\otimes \cdots\otimes \sG_n(B_x)=\overline{\sG_0}\otimes \cdots\otimes \overline{\sG_n}(B_x).
    \end{equation}
    Note that $\sG_0=\overline{\sG_0}$. We will establish \eqref{eq:prodGeomLaw} by showing that
    \begin{align*}
      \sG_0\otimes \cdots\otimes \sG_n(B_x) &=\overline{\sG_0}\otimes           \sG_1 \otimes \cdots\otimes           \sG_{n-1}  \otimes           \sG_n (B_x)\\
                                            &=          \sG_0 \otimes \overline{\sG_1}\otimes \cdots\otimes           \sG_{n-1}  \otimes           \sG_n (B_x)\\[2pt]
      \cdots\qquad                          &=          \sG_0 \otimes           \sG_1 \otimes \cdots\otimes           \sG_{n-1}  \otimes \overline{\sG_n}(B_x)\\[1em]
      \cdots\qquad                          &=          \sG_0 \otimes           \sG_1 \otimes \cdots\otimes \overline{\sG_{n-1}} \otimes \overline{\sG_n}(B_x)\\[2pt]
                                            &                                      \hspace{1.5cm} \cdots\cdots\cdots                                          \\[2pt]
      \cdots\qquad                          &=\overline{\sG_0}\otimes \overline{\sG_1}\otimes \cdots\otimes \overline{\sG_{n-1}} \otimes \overline{\sG_n}(B_x).
    \end{align*}
    More precisely, we will show that for all $0\leq k<m\leq n$ fixed,
    \begin{equation}\label{eq:kmnGeom}
      \mu(B_x)=\mu'(B_x),
    \end{equation}
    where
    \begin{align*}
       \mu &=\sG_0\otimes \cdots\otimes \sG_{k-1}\otimes \overline{\sG_k}\otimes           \sG_{k+1} \otimes\sG_{k+2}\otimes \cdots \otimes \sG_{m-1}\otimes \overline{\sG_m}\otimes \cdots\otimes \overline{\sG_n}\hspace{0.5em}\text{and}\\
       \mu'&=\sG_0\otimes \cdots\otimes \sG_{k-1}\otimes           \sG_k \otimes \overline{\sG_{k+1}}\otimes\sG_{k+2}\otimes \cdots \otimes \sG_{m-1}\otimes \overline{\sG_m}\otimes \cdots\otimes \overline{\sG_n}.%
    \end{align*}

    Any two independent truncated geometric random variables are conditionally uniform on the set of possible values given their difference.
    From this it follows that a random tuple $L$ with law $\mu$ is conditionally uniform on some set given $L_{k+1}-L_k$ and ${\bigl(L_i\colon i\in \bbb{0,n}\setminus \{k,k+1\}\bigr)}$.
    Thus by Lemma~\ref{lem:revBij}, the following conditional probabilities are equal:
    $$
    \mu\bigl(B_x\mid L_{k+1}-L_k, (L_i\colon i\not=k,k+1)\bigr)=\mu\bigl(B_x'\mid L_{k+1}-L_k, (L_i\colon i\not=k,k+1)\bigr),
    $$
    where
    $$
      B_x':=\Bigl\{\ell\in \{0\}\times \cdots\times \bbb{0,n}\colon (k\ k+1)\circ\sDtilde(\ell)\vdash x\Bigr\}.
    $$
    Integrating out the conditioning yields that
    \begin{equation}\label{eq:eqBxPrime}
      \mu(B_x)=\mu(B_x').
    \end{equation}
    If $(L_k,L_{k+1})$ has law $\overline{\sG_k}\otimes \sG_{k+1}$, then
    $$
      \bigl(L_{k+1}-\mathbbm{1}[L_{k+1}>L_{k}],\ L_k+\mathbbm{1}[L_{k+1}\leq L_{k}]\bigr)
    $$
    has law $\sG_k\otimes \overline{\sG_{k+1}}$.
    When combined with Lemma~\ref{lem:lehmerSwap}, this yields that $\mu(B_x')=\mu'(B_x)$.
    We now deduce \eqref{eq:kmnGeom} from this and \eqref{eq:eqBxPrime}.
    The lemma follows by repeated application of \eqref{eq:kmnGeom} as indicated above.
  \end{proof}

  \begin{cor}\label{cor:obv}
    If $X=(X_i)_{i\in\m Z}$ is a random $q$-coloring of $\m Z$ with law $\MalCol_{q,t}$, then
    $$
      (X_0,X_1,\ldots,X_n)\eqd(X_n,X_{n-1},\ldots,X_0),\qquad n\geq 0.
    $$
  \end{cor}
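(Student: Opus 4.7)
The plan is to reduce the claim to the symmetry identity $P^{\col}(x) = P^{\col}(\overline{x})$ for every word $x \in \bbb{1,q}^{\bbb{0,n}}$. Indeed, Lemma~\ref{lem:kolm} gives that the joint law of $(X_0,X_1,\ldots,X_n)$ is exactly $\ptcoln{n+1}$, so establishing this pointwise identity of probability mass functions will yield the stated distributional equality.

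To obtain this symmetry, first I would note an elementary combinatorial duality between word reversal and permutation reversal: for any $\sigma \in \Sym(\bbb{0,n})$ and any word $x$, unpacking the definition shows that $(\overline{x})^{\overline{\sigma}}(t)$ is the letter-by-letter reversal of $x^{\sigma}(t)$ for each $t$. Since a word is proper if and only if its reversal is, this gives
$$
  \sigma \vdash x \ \Longleftrightarrow\ \overline{\sigma} \vdash \overline{x},
$$
or equivalently, after replacing $x$ by $\overline{x}$, that $\overline{\sigma} \vdash x$ iff $\sigma \vdash \overline{x}$. Then, letting $\sigma$ be a Mallows-distributed random permutation of $\bbb{0,n}$ and applying Lemma~\ref{cor:malProb} together with this duality yields
$$
  \P(\sigma \vdash x)\;=\;\P(\overline{\sigma} \vdash x)\;=\;\P(\sigma \vdash \overline{x}).
$$
Substituting into the formula \eqref{eq:malPcol} gives $P^{\col}(x) = P^{\col}(\overline{x})$, as required.

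There is essentially no genuine obstacle here: all of the delicate work has already been absorbed into Lemma~\ref{cor:malProb}, whose proof relied on the intricate involution argument of Lemma~\ref{lem:revBij}. The only point needing attention in this deduction is the translation between reversal of buildings and reversal of words, which is a routine check from the definition of $x^{\sigma}(t)$.
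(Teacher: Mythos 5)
Your proposal is correct and follows essentially the same route as the paper: the paper's proof of this corollary likewise deduces $P^{\col}(x)=P^{\col}(\overline{x})$ from Lemma~\ref{cor:malProb} via the proportionality \eqref{eq:malPcol}, with the word/permutation reversal duality $\overline{\sigma}\vdash x \iff \sigma\vdash\overline{x}$ left implicit where you spell it out. Making that routine check explicit is a fine (and harmless) addition.
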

  \begin{proof}
    This follows directly from the previous lemma, since for any word $x$ and for $\sigma$ a random Mallows permutation, equation \eqref{eq:malPcol} implies that the quantities $\m P(\sigma\vdash x)$ and $\m P\bigl((X_0,\ldots,X_n)=x\bigr)$ are constant multiples of one another.
  \end{proof}

  \begin{proof}[Proof of Proposition~\ref{prop:reverseMain}]
    This is immediate from Corollary~\ref{cor:obv}.
  \end{proof}

\begin{proof}[Proof of Corollary~\ref{cor:1depqcol}]
  By inspection, $(q,t,1)$ satisfies \eqref{eq:qktAna} if and only if $q=(t+1)^2/t$, which is equivalent to
  \begin{equation}\label{eq:q1t}
    t^{1/2}+t^{-1/2}=\sqrt{q}.
  \end{equation}
  Provided $\sqrt{q}\geq 2$, there exists $t=t(q)\in[0,1]$ satisfying this equation. 
  Then for any proper word $x$,
  $$
    \Bn(x)=\sum_{i=0}^n t^{n-i}\Bn(\hat x_i) \quad \text{ and } \quad \Bn(x)=\sum_{i=0}^n t^{i}\Bn(\hat x_i),
  $$
  where the first equation is due to Lemma~\ref{lem:recurrenceForProb} and the second follows by combining Lemma~\ref{lem:recurrenceForProb} with Corollary~\ref{cor:obv}.
  Averaging these two equalities yields
  \begin{equation*}\label{eqn:symmetrReq}
    \Bn(x)=\sum_{i=0}^n \left(\frac{t^{n-i}+t^i}{2}\right)\Bn(\hat x_i).
  \end{equation*}
  By \eqref{eq:q1t}, the coefficients of this recurrence agree with those of \cite[Eq. 2.2]{HL2}.
\end{proof}

%%%%%%%%%%%%%%%%%%%%%%%%%%%
%%%%%%%%%%%%%%%%%%%%%%%%%%%
%%                       %%
%%       Section 5       %%
%%       Lehmer conv.    %%
%%                       %%
%%%%%%%%%%%%%%%%%%%%%%%%%%%
%%%%%%%%%%%%%%%%%%%%%%%%%%%

\section{Convergence of Lehmer codes}\label{sec:bubbles}
  Previously we showed that $\MalCol_{q,t}$ is $k$-dependent whenever $(q,k,t)$ satisfy the tuning equation \eqref{eq:qktAna} and that $\MalCol_{q,t}$ is reversible. The key step remaining in the proof of the main theorem is to exhibit a finitary factor of an iid process having this law. We accomplish this over the course of the next two sections. The idea will be to give a probabilistic construction of the colorings on $\m Z$. On a finite interval, this was done already in Proposition~\ref{prop:sec3main}. Our goal now is to extend this construction to $\m Z$ by taking appropriate limits.

  In this section we show that the law of the Lehmer code of a  $\BMal$-distributed permutation of $\bbb{-n,n}$ is iid in the limit as $n\to\infty$ with a certain distribution. In the next section we will use this iid sequence to produce a random coloring of $\m Z$ satisfying all of the properties claimed in Theorem~\ref{main}.

  Both the Lehmer code, $\sL$, and the insertion code, $\sLtilde$, play an important role in these sections. The zeros of the Lehmer code occur at key locations in the coloring (essentially, they are the renewal times). On the other hand, the pushforward of $\BMal$ under the insertion code is the product of truncated geometric distributions. The pushforward under the Lehmer code is not a product measure for general finite intervals, but our main result in this section is that it tends towards a product measure as the interval approaches $\m Z$.

  Recall that the Lehmer code is the map $\sL\colon \Sym(I)\to \bbb{0,\infty}^{I}$ given by
  $$
  \sL(\sigma)=\Bigl(\#\bigl\{ j\in I\colon j>i\text{ and }\sigma(j)<\sigma(i)\bigr\}\Bigr)_{i\in I}\quad \text{for } \sigma\in\Sym(I).
  $$
  Here we identify $\bbb{0,\infty}^I$ with the subset of $\bbb{0,\infty}^{\m Z}$ consisting of sequences vanishing on $\m Z\setminus I$, so that we can compare permutations on different intervals.

  Recall from Section~\ref{sec:background} that $\BMal_{t,u}$, the bubble-biased Mallows measure on permutations, assigns to $\sigma\in\Sym(I)$ a probability proportional to $u^{\#\!\bub(\Gamma_\sigma)}t^{\inv(\sigma)}$.

  \begin{prop}\label{prop:codingConvergence}
    Let $\sigma_n$ be a random permutation of $\bbb{-n,n}$ with law $\BMal_{t,u}$. As $n\to\infty$ the sequence $\sL(\sigma_n)$ converges in law, with respect to the product topology on $\bbb{0,\infty}^{\m Z}$, to an iid sequence of $u$-zero-weighted, $t$-geometric random variables.
  \end{prop}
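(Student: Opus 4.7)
The strategy is to derive an explicit density formula for $\sL(\sigma_n)$ under $\BMal_{t,u}$, identify it as a perturbation of a product measure by a ``left-founder'' factor, and show that this perturbation vanishes in the limit. Since $\sL\colon\Sym(I)\to \Omega_I$ is a bijection with inverse $\sD$ (Lemma~\ref{lem:LDinv}), and since a position $i$ is a right-founder of $\sigma$ (arrives before everything to its right) exactly when $\sL(\sigma)_i=0$, the set of founders decomposes as $\c F(\sigma)=R\cup L$ where $R=\{i:\sL(\sigma)_i=0\}$ and $L$ is the set of left-to-right minima of $\sigma$ viewed as a sequence. The globally first-arriving position is the unique common element, giving $\#\c F = \#R + \#L - 1$. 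Combined with $\#\!\bub(\Gamma_\sigma)=\#\c F-1$ and $\inv(\sigma)=\sum_i\sL(\sigma)_i$, the density \eqref{eq:bmalDef} becomes
\begin{equation*}
\P_{\BMal}\bigl(\sL(\sigma_n)=\ell\bigr)=\frac{u^{\#L(\sD(\ell))-2}}{Z_n}\prod_{i\in\bbb{-n,n}}u^{\mathbbm{1}[\ell_i=0]}\,t^{\ell_i},\qquad \ell\in \Omega_{\bbb{-n,n}}.
\end{equation*}

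Let $\nu_n$ be the product measure on $\Omega_{\bbb{-n,n}}$ with density proportional to $\prod_i u^{\mathbbm{1}[\ell_i=0]}t^{\ell_i}$; under $\nu_n$ the coordinates $\ell_i$ are independent $u$-zero-weighted, $(n-i)$-truncated, $t$-geometric variables. A direct marginal calculation shows that $\nu_n$ converges weakly (in the product topology on $\bbb{0,\infty}^{\m Z}$) to the iid measure $\nu$ with $u$-zero-weighted $t$-geometric marginals, which is the target of the proposition. Writing $\mu_n$ for the law of $\sL(\sigma_n)$, the above formula gives $d\mu_n/d\nu_n\propto u^{\#L(\sD(\ell))}$, so for any cylinder event $A$ depending on coordinates in a fixed finite window $F$,
\begin{equation*}
\mu_n(A)=\frac{\E_{\nu_n}\bigl[u^{\#L(\sD(\ell))}\mathbbm{1}_A\bigr]}{\E_{\nu_n}\bigl[u^{\#L(\sD(\ell))}\bigr]}.
\end{equation*}
It thus suffices to show that $u^{\#L(\sD(\ell))}$ is uniformly integrable under $\nu_n$ and asymptotically independent of any such cylinder event, as this gives $\mu_n(A)\to\nu(A)$ and hence the desired weak convergence.

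The crux is to prove that under $\nu_n$ the left-to-right minima of $\sD(\ell)$ localize near the left endpoint $-n$. Since under $\nu_n$ the $\ell_i$ have exponential tails uniformly in $n$ (as $t<1$), $\sD(\ell)$ behaves like a Mallows-type random permutation, and one expects a uniform exponential displacement bound $\P_{\nu_n}\bigl(|\sD(\ell)(j)-j|>r\bigr)\leq C\rho^r$. A position $i$ being a left-to-right minimum requires $\sD(\ell)(i)<\sD(\ell)(j)$ for all $j<i$; applying the displacement bound to $j=-n$, this has probability decaying like $\rho^{\,i+n}$, so summation yields $\E_{\nu_n}[\#L]\leq C$ and $\E_{\nu_n}[u^{\#L}]\leq C'$ uniformly, and also shows that with probability tending to one no left-to-right minimum lies in the fixed window $F$. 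The latter fact implies $\#L(\sD(\ell))$ is measurable with respect to $\ell|_{\bbb{-n,n}\setminus F}$ outside of an asymptotically negligible event, yielding the required asymptotic independence. The main obstacle is establishing the displacement bound itself under $\nu_n$: the map $\sD$ is emphatically nonlocal, as each cyclic decrement $\pi^-_{\bbb{i,i+\ell_i}}$ moves an entire interval of positions, and the composition can combine many such moves. One approach is to couple $\sD(\ell)$ with a standard Mallows permutation via Lemma~\ref{lem:geomMal} and invoke the tail bounds from \cite{gnedin2012two}; an alternative is to argue directly via stochastic domination, using that the zero-weighting only inflates the probability of short jumps.
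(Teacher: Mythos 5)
Your reduction is set up correctly: the identity $\#\c F(\sigma)=\#\{i:\sL(\sigma)_i=0\}+\#L(\sigma)-1$ and hence $d\mu_n/d\nu_n\propto u^{\#L(\sD(\ell))}$ is right, and the plan (show the left-to-right minima localize near $-n$ under $\nu_n$, then transfer cylinder probabilities) is a genuinely different route from the paper, which never works under a product reference measure: instead it proves an \emph{exact} statement (Lemma~\ref{lem:conditionalDist}) that under $\BMal$ itself, conditionally on $\sigma_n^{-1}(-n)<\min I$, the entries $\sL(\sigma_n)_j$ for $j\geq \min I$ are exactly independent $u$-zero-weighted truncated $t$-geometrics (via a bijection showing the left-part weight $R_{u,n,i,t}(\ell)$ does not depend on $\ell$), and then proves tightness of $\sigma_n^{-1}(-n)$ by a monotone Strassen coupling with a genuine Mallows permutation, exploiting that $\sDtilde(\cdot)^{-1}(0)$ is coordinatewise monotone. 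However, your argument has genuine gaps exactly at its quantitative core. First, the uniform displacement bound $\P_{\nu_n}(|\sD(\ell)(j)-j|>r)\leq C\rho^r$ is the crux and is only asserted (``one expects''). It does not follow by citation: the tail bounds in the Mallows literature are for $\Mal_t$, typically via the insertion-code representation, whereas here the \emph{Lehmer} code is given a zero-weighted product law, $\sD$ is nonlocal, and the natural monotone structure used by the paper (monotonicity of $\sDtilde(\cdot)^{-1}(0)$ in the insertion code) has no identified analogue for the functional ``$i$ is a left-to-right minimum of $\sD(\ell)$'', which is not monotone in $\ell$. Some such argument can surely be made, but it is the real content of the proof and is missing.

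Second, even granting the marginal bound $\P_{\nu_n}(i\in L)\leq C\rho^{i+n}$, the step ``$\E_{\nu_n}[\#L]\leq C$ and $\E_{\nu_n}[u^{\#L}]\leq C'$'' is a non sequitur: a bounded first moment does not bound an exponential moment. The union bound only yields $\P_{\nu_n}(\#L\geq m+1)\lesssim\rho^{m}$ (the rightmost LR minimum must sit at position $\geq -n+m$), so $\E_{\nu_n}[u^{\#L}]$ is controlled by this route only when $u\rho<1$; for the parameters actually needed (e.g.\ $q=3$, so $u=2$, with $t$ not small) this is not guaranteed, and a finer multiplicative/conditional argument for the tail of $\#L$ would be required. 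Relatedly, the asymptotic-independence step needs more care than stated: the event ``no LR minimum in $F$'' is not itself measurable with respect to $\ell|_{\bbb{-n,n}\setminus F}$ (and you must also exclude LR minima to the \emph{right} of $F$, since $\mathbbm{1}[i\in L]$ for $i>\max F$ depends on window coordinates); one must replace $\#L$ by a function of the outside coordinates up to an event of small probability and then control $\E_{\nu_n}\bigl[u^{\#L}\mathbbm{1}[\mathrm{bad}]\bigr]$, which again requires moment bounds beyond uniform integrability as stated. These are fixable in principle, but as written the proof's decisive estimates are assumed rather than proved.
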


   The remainder of this section is devoted to the proof of this proposition, which requires several simple but technical lemmas. Recall that $\c F(\sigma)$ denotes the set of founders of the permutation $\sigma$ (see Section~\ref{sec:background}).

  \begin{lemma}\label{lem:bubEndpointCoding}
    Let $\sigma$ be a permutation of $\bbb{0,n}$. Then $\sigma^{-1}(0)$ is a founder of $\sigma$ and
    \begin{equation}\label{eqn:firstZero}
      \sigma^{-1}(0)=\min \{0\leq i\leq n\colon \mathscr{L}(\sigma)_i=0\}\text{ and}
    \end{equation}
    \begin{equation}\label{eqn:bubbleSets}
    \c F(\sigma)\cap \bbb{\sigma^{-1}(0),n}=\{i\in I\colon \mathscr{L}(\sigma)_i=0\}\text{ and}
    \end{equation}
    \begin{equation}\label{eqn:bubbleSets2}
    \c F(\sigma)\cap \bbb{0,\sigma^{-1}(0)}=\{i\in I\colon \mathscr{L}(\sigma)_i=\sigma(i)\}.
    \end{equation}
  \end{lemma}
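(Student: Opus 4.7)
The plan is to unpack the definitions of founder, Lehmer code, and $\sigma^{-1}(0)$ and observe that each claimed equality reduces to a short, direct combinatorial check. The key observations I will rely on throughout are: (a) an integer $i$ is a founder precisely when either $\sigma(j)>\sigma(i)$ for all $j<i$ (``arrives before all smaller elements'') or $\sigma(j)>\sigma(i)$ for all $j>i$ (``arrives before all larger elements''); (b) $\mathscr{L}(\sigma)_i$ counts the elements to the right of $i$ whose $\sigma$-value is smaller than $\sigma(i)$; and (c) since there are exactly $\sigma(i)$ integers with $\sigma$-value below $\sigma(i)$, we have the a~priori bound $0\le \mathscr{L}(\sigma)_i\le\sigma(i)$.

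For (1), because $\sigma^{-1}(0)$ has the smallest possible arrival time, it arrives before every other element of $I$, so trivially both founder conditions hold, and no $j$ satisfies $\sigma(j)<0$, giving $\mathscr{L}(\sigma)_{\sigma^{-1}(0)}=0$. Conversely, for any $i<\sigma^{-1}(0)$, choosing $j=\sigma^{-1}(0)>i$ yields $\sigma(j)=0<\sigma(i)$ (since $\sigma(i)\ge1$), so $\mathscr{L}(\sigma)_i\ge 1$. Thus $\sigma^{-1}(0)=\min\{i\colon\mathscr{L}(\sigma)_i=0\}$, as required.

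For (2), observe that $\mathscr{L}(\sigma)_i=0$ says exactly that $i$ arrives before every $j>i$, so any such $i$ is a founder. The argument in (1) already shows such $i$ must lie in $\bbb{\sigma^{-1}(0),n}$, which gives the inclusion $\supseteq$ after intersecting with $\bbb{\sigma^{-1}(0),n}$. For $\subseteq$, suppose $i\in\mathcal F(\sigma)\cap\bbb{\sigma^{-1}(0),n}$. If $i=\sigma^{-1}(0)$ we are done by (1). Otherwise $i>\sigma^{-1}(0)$, so $\sigma^{-1}(0)$ is a \emph{smaller} integer than $i$ that arrives before $i$, ruling out the ``arrives before all smaller elements'' alternative; hence $i$ arrives before all larger elements, which is exactly $\mathscr{L}(\sigma)_i=0$.

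For (3), by observation (c) equality $\mathscr{L}(\sigma)_i=\sigma(i)$ holds iff every integer with $\sigma$-value less than $\sigma(i)$ lies strictly to the right of $i$; equivalently, no $j<i$ satisfies $\sigma(j)<\sigma(i)$; equivalently, $i$ arrives before every smaller integer. Such an $i$ is therefore a founder, and taking $j=\sigma^{-1}(0)$ in the condition forces $\sigma^{-1}(0)\ge i$, giving the inclusion into $\bbb{0,\sigma^{-1}(0)}$. Conversely, if $i\in\mathcal F(\sigma)\cap\bbb{0,\sigma^{-1}(0)}$ with $i<\sigma^{-1}(0)$, then $\sigma^{-1}(0)$ is a \emph{larger} integer arriving before $i$, so the ``arrives before all larger elements'' alternative fails, leaving $i$ arrives before all smaller elements, i.e.\ $\mathscr{L}(\sigma)_i=\sigma(i)$; the case $i=\sigma^{-1}(0)$ was handled in (1). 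There is no real obstacle here beyond bookkeeping: the only subtle point is the consistent handling of the overlap at $i=\sigma^{-1}(0)$, where both characterizations simultaneously give $\mathscr{L}(\sigma)_i=0=\sigma(i)$.
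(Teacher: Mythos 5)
Your proof is correct and takes essentially the same route as the paper's: both unpack the two founder alternatives, note that for $i>\sigma^{-1}(0)$ the ``before all smaller'' alternative fails while for $i<\sigma^{-1}(0)$ the ``before all larger'' alternative fails, and use the identity $\sigma(i)-\mathscr{L}(\sigma)_i=\#\{j<i\colon\sigma(j)<\sigma(i)\}$. Your write-up is merely a bit more explicit about \eqref{eqn:firstZero} and the two set inclusions, which the paper's proof treats implicitly.
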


  \begin{proof}
    By the definition of a founder, $i\in \c F(\sigma)$ if and only if either $\sigma(k)>\sigma(i)$ for all $k>i$ or $\sigma(j)>\sigma(i)$ for all $j<i$. When $i=\sigma^{-1}(0)$ both conditions hold. If $i> \sigma^{-1}(0)$, then the latter condition cannot hold, and so
    $$
      \c F(\sigma)\cap \bbb{\sigma^{-1}(0),n}=\{i\in I\colon  \sigma(k)>\sigma(i) \text{ for all }k>i\}=\{i\in I\colon \mathscr{L}(\sigma)_i=0\}.
    $$
    Similarly if $i<\sigma^{-1}(0)$, then the former condition cannot hold, and so
    \begin{align*}
      \c F(\sigma)\cap \bbb{0,\sigma^{-1}(0)}&=\{i\in I\colon  \sigma(j)>\sigma(i) \text{ for all } j<i\}=\{i\colon \mathscr{L}(\sigma)_i=\sigma(i)\},
    \end{align*}
    where the last equality follows since $\sigma(i)-\mathscr{L}(\sigma)_i=\#\{j<i\colon \sigma(j)<\sigma(i)\}$.
  \end{proof}

  \begin{lemma}\label{lem:conditionalDist}
    Let $\sigma$ be a random permutation of $\bbb{m,n}$ with law $\BMal_{t,u}$ and let $i\in \bbl{m,n}$. Given $\sigma^{-1}(m)<i$, the random variables $(\sL(\sigma)_j)_{j=i}^n$ are conditionally independent of each other, with the conditional law of $\sL(\sigma)_j$ being the $u$-zero-weighted, $(n-j)$-truncated, $t$-geometric distribution.
  \end{lemma}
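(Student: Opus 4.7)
The plan is to establish, on the event $\{p<i\}$ with $p:=\sigma^{-1}(m)$, an explicit multiplicative factorization of the $\BMal_{t,u}$-density of $\sigma$ into a factor depending only on the prefix $\sigma|_{\bbb{m,i-1}}$ and a factor depending only on the suffix Lehmer entries $(\sL(\sigma)_j)_{j\geq i}$. Writing $\ell:=\sL(\sigma)$ and recalling that a founder is a position that is a min-record of $\sigma$ when read from either the left or the right, I would decompose $\#\c F(\sigma)=L+R-1$, where $L$ counts left-records (automatically lying in $\bbb{m,p}$) and $R$ counts right-records (automatically lying in $\bbb{p,n}$), with $p$ contributing to both. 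The interval-$\bbb{m,n}$ analogue of Lemma~\ref{lem:bubEndpointCoding} identifies $R$ with $\#\{j\geq p:\ell_j=0\}$, and on the event $\{p<i\}$ this splits as $R=R_1+R_2$ with $R_1:=\#\{j\in\bbb{p,i-1}:\ell_j=0\}$ and $R_2:=\#\{j\in\bbb{i,n}:\ell_j=0\}$. Combined with $\inv(\sigma)=\sum_{j<i}\ell_j+\sum_{j\geq i}\ell_j$, the weight becomes
$$
u^{\#\c F(\sigma)-1}t^{\inv(\sigma)}=\bigl[u^{L+R_1-1}t^{\sum_{j<i}\ell_j}\bigr]\cdot\bigl[u^{R_2-1}t^{\sum_{j\geq i}\ell_j}\bigr].
$$

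The next step is to verify that the first bracket is a function of the prefix $\sigma|_{\bbb{m,i-1}}$ and the second a function of $(\ell_j)_{j\geq i}$. The second claim is immediate. For the first, $L$ depends only on $\sigma|_{\bbb{m,p}}$, and $\sum_{j<i}\ell_j$ splits as $\inv(\sigma|_{\bbb{m,i-1}})$ plus a ``cross'' contribution $\#\{(v,w)\in V\times V^c:v>w\}$ where $V:=\sigma(\bbb{m,i-1})$, which clearly depend only on the prefix. The only subtle point---and the main obstacle---is that $R_1$ depends only on the prefix. For $j\in\bbb{p,i-1}$, the condition $\ell_j=0$ is equivalent to $\sigma(j)=\min\sigma(\bbb{j,n})$; since $\sigma(\bbb{j,n})=\sigma(\bbb{j,i-1})\cup V^c$, both this set and the value $\sigma(j)$ are determined by $\sigma|_{\bbb{m,i-1}}$, yielding the claim.

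Granting the factorization and noting that the event $\{p<i\}$ equals $\{m\in V\}$ and so depends only on the prefix, standard marginalization shows that conditional on $\{p<i\}$, the sequence $(\ell_j)_{j\geq i}$ has law proportional to $u^{R_2-1}t^{\sum_{j\geq i}\ell_j}=u^{-1}\prod_{j=i}^{n}u^{\mathbbm{1}[\ell_j=0]}t^{\ell_j}$. Since $\ell_j$ ranges over $\bbb{0,n-j}$ independently in this product, normalizing each factor yields precisely the independent $u$-zero-weighted, $(n-j)$-truncated, $t$-geometric distributions claimed in the lemma.
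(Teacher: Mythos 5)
Your proof is correct, and it reaches the conclusion by a genuinely different route than the paper, although both arguments hinge on the same key input: the characterization from Lemma~\ref{lem:bubEndpointCoding} of the founders lying to the right of $\sigma^{-1}(m)$ as the zeros of the Lehmer code. The paper fixes the suffix entries $(\ell_j)_{j\geq i}$, pulls out the factor $\prod_{j\geq i}u^{\mathbbm{1}[\ell_j=0]}t^{\ell_j}$, and then must show that the remaining sum $R_{u,n,i,t}(\ell)$ over all compatible permutations does not depend on $\ell$; this is done with the weight-preserving bijection $\sD\circ P\circ \sL$, which swaps one suffix code for another while fixing the prefix of the permutation. You instead parametrize the event $\{\sigma^{-1}(m)<i\}$ by pairs consisting of the prefix word and the suffix Lehmer entries, and factor the $\BMal_{t,u}$-weight of each \emph{individual} permutation as a function of the prefix times $u^{R_2-1}t^{\sum_{j\geq i}\ell_j}$. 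The price of this route is the pair of extra verifications that the cross-inversions between prefix and suffix and the prefix zero-count $R_1$ are determined by the prefix alone --- both of which you carry out correctly --- and the payoff is that conditional independence then follows by elementary marginalization over a product set, with no code-swapping bijection needed. One small point worth making explicit in a final write-up: for a fixed prefix, the entries $(\sL(\sigma)_j)_{j\geq i}$ are exactly the Lehmer code of the arrangement of the remaining values, so by (the analogue of) Lemma~\ref{lem:LDinv} they biject with the full box $\prod_{j=i}^{n}\bbb{0,n-j}$ independently of the prefix; this bijectivity is what licenses the ``standard marginalization'' and the factor-by-factor normalization at the end.
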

  \begin{proof}
    By a simple relabelling we assume that $m=0$ without loss of generality.
    For each $\ell = %
    (\ell_i,\ldots,\ell_n) \in \bbb{0,n-i}\times\cdots \times \{0\}$, let
    \begin{align*}
      \mathscr{A}(\ell) =
        \bigl\{ \tau \in \Sym(\bbb{0,n}):  \, \tau^{-1}(0) < i \text{ and }
        \mathscr{L}(\tau)_j=\ell_j \text{ for all } i\leq j\leq n\bigr\}.
    \end{align*}

    Recall that $\#\c F(\tau)=\#\!\bub(\Gamma_\tau)+1$. Thus
    \begin{align*}
      \m P\bigl((\mathscr{L}(\sigma)_i,\ldots,\mathscr{L}(\sigma)_n)=\ell
       \mid \sigma^{-1}(0)<i\bigr) =
      \frac{\m P\bigl(\sigma\in \sA(\ell)\bigr)}
        {\m P \bigl( \sigma^{-1}(0) < i\bigr )}
      &=\frac{1}{Z}\sum_{\tau \in \mathscr{A}(\ell)
      }t^{\inv(\tau)}u^{\#\c F(\tau)},
    \end{align*}
    where $Z=\m P \bigl( \sigma^{-1}(0) < i\bigr )$.

    Observe that
    \begin{align}
      \sum_{\tau \in \mathscr{A}(\ell)}t^{\inv(\tau)}u^{\#\c F(\tau)} &=
      \sum_{\tau \in \mathscr{A}(\ell)}\prod_{j=0}^nt^{\mathscr{L}(\tau)_j}u^{\mathbbm{1}[j\in \c F(\tau)]}\nonumber\\
      &=\prod_{j=i}^{n}t^{\ell_j}u^{\mathbbm{1}[\ell_j=0]}
      \Biggl[\sum_{\tau \in \mathscr{A}(\ell)}\prod_{j=0}^{i-1}t^{\mathscr{L}(\tau)_j}u^{\mathbbm{1}[j\in \c F(\tau)]}\Biggr],\label{eq:bubbleFactorCondition}
    \end{align}
    where in the second equality we have applied equation \eqref{eqn:bubbleSets} of Lemma \ref{lem:bubEndpointCoding}. Let
    \begin{equation*}
      R_{u,n,i,t}(\ell):=\sum_{\tau \in \mathscr{A}(\ell)}\prod_{j=0}^{i-1}t^{\mathscr{L}(\tau)_j}u^{\mathbbm{1}[j\in \c F(\tau)]}
    \end{equation*}
    denote the bracketed expression in \eqref{eq:bubbleFactorCondition}. The product appearing in $R_{u,n,i,t}$ factorizes into two factors, corresponding to equations \eqref{eqn:bubbleSets} and \eqref{eqn:bubbleSets2} of Lemma \ref{lem:bubEndpointCoding} respectively:
    \begin{equation}\label{eqn:RfactorSum}
        R_{u,n,i,t}(\ell)= \sum_{\tau \in \mathscr{A}(\ell)} \Biggl[\prod_{j=0}^{\tau^{-1}(0)-1} t^{\mathscr{L}(\tau)_j}u^{\mathbbm{1}[\mathscr{L}(\tau)_j=\tau(j)]} \Biggr] \Biggl[\prod_{j=\tau^{-1}(0)}^{i-1} t^{\mathscr{L}(\tau)_j}u^{\mathbbm{1}[\mathscr{L}(\tau)_j=0]} \Biggr].
    \end{equation}
  We wish to show that the function $R_{u,n,i,t}$ is constant, i.e., that for
  any pair of tuples $\ell$ and $\ell'$ belonging to
  $\bbb{0,n-i}\times\cdots\times \{0\},$
  it holds that $R_{u,n,i,t}(\ell)=R_{u,n,i,t}(\ell')$.

  Recall that $\sL$ is a bijection from $\Sym(\bbb{0,n})$ to
  $\bbb{0,n}\times\cdots\times\{0\}$ with inverse $\sD$.
  For any $\tau\in\sA(\ell)$, eq.~\eqref{eqn:bubbleSets} implies that
  $\tau^{-1}(0)<i$ if and only if there exists $j\in \bbr{0,i}$ such that
  $\mathscr{L}(\tau)_j=0$. Thus $\sL(\sA(\ell))$ is the set of tuples in
  $\bbb{0,n}\times\cdots \times \{0\}$ whose restriction to $\bbb{i,n}$ is
  $\ell$ and whose restriction to $\bbr{0,i}$ has at least one entry that
  vanishes. The set $\sL(\sA( \ell'))$ bears a similar description.
  Let $P$ be the bijection from $\sL(\sA(\ell))$ to $\sL(\sA(\ell'))$ leaving
  the restriction to $\bbr{0,i}$ fixed, and let $Q=\sD\circ P\circ \sL$ be the
  corresponding bijection from $\sA(\ell)$ to $\sA(\ell')$.

  By the explicit formula for $\sD$ in \eqref{eq:D}, it follows that
  $$
    \tau(j)=\left(\bigCircle_{k=0}^j\pi^-_{\bbb{k,k+\sL(\tau)_k}}\right)
    \hspace{-.3em}(j),\qquad j\in\bbr{0,i}.
  $$
  Since $P$ fixes the restriction of $\sL(\tau)$ to $\bbr{0,i}$, we have that
  $Q$ fixes the restriction of $\tau$ to $\bbr{0,i}$.
  Applying \eqref{eqn:bubbleSets} once more shows that $Q$
  also fixes $\tau^{-1}(0)$. Thus, each summand of \eqref{eqn:RfactorSum}
  is unchanged by the action of $Q$, from which it follows that
  $R_{u,n,i,t}(\ell)=R_{u,n,i,t}(\ell')$. Since $\ell$ and $\ell'$ were
  arbitrary, $R_{u,n,i,t}$ does not depend on $\ell$. The lemma now follows from
  \eqref{eq:bubbleFactorCondition}.
  \end{proof}

  A real-valued random variable $X$ is said to
  \textbf{stochastically dominate} another random variable $Y$ if
  $\m P(X>r)\geq \m P(Y>r)$ for all $r$.
  \begin{lemma}\label{lem:domination}
    Fix $0<t<s<1$ and $u\geq 1$. Let $S$ be an
    $n$-truncated, $s$-geometric random variable and let $T$ be a
    $u$-end-weighted, $n$-truncated, $t$-geometric random variable.
    Then $S$ stochastically dominates $T$ for all
    \begin{equation*}
        n\geq n_0:=\log_{s/t}\left(u\cdot \frac{1-t}{1-s}\right).
    \end{equation*}
  \end{lemma}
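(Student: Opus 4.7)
The plan is to establish the stochastic domination via the single-crossing property of the likelihood ratio. Set $p_j := \P(T=j)$ and $q_j := \P(S=j)$, and let $Z_T := u+t+\cdots+t^{n-1}+ut^n$ and $Z_S := 1+s+\cdots+s^n$ be their respective normalizing constants. A direct computation gives
\[
\frac{p_j}{q_j} \;=\; \frac{Z_S}{Z_T}\cdot\begin{cases}u, & j=0,\\ (t/s)^j, & 1\leq j\leq n-1,\\ u(t/s)^n, & j=n,\end{cases}
\]
so the ratio picks up an extra factor of $u$ at the two endpoints and decreases geometrically by the factor $t/s$ in between.

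I would then verify three facts: (I) $p_0\geq q_0$; (II) $p_j/q_j$ is non-increasing on $\{0,\ldots,n-1\}$; and (III) $p_n\leq q_n$ whenever $n\geq n_0$. For (I), one writes
\[
uZ_S-Z_T\;=\;\sum_{j=1}^{n-1}(us^j-t^j)+u(s^n-t^n),
\]
which is term-by-term non-negative since $u\geq 1$ and $s>t$. For (II), the geometric portion is immediate from $t/s<1$, while the step from $j=0$ to $j=1$ multiplies the ratio by $(t/s)/u\leq 1$ since $u\geq 1>t/s$. For (III) --- the heart of the argument --- it suffices to establish the crude bound $uZ_S/Z_T\leq u(1-t)/(1-s)$, since then the definition of $n_0$ yields $(s/t)^n\geq u(1-t)/(1-s)\geq uZ_S/Z_T$. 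This crude bound is equivalent to $(1-s)Z_S\leq (1-t)Z_T$, which expands (using $(1-t)(t+\cdots+t^{n-1})=t-t^n$) to
\[
(1-t)\bigl[u(1+t^n)-1\bigr]+s^{n+1}-t^n\;\geq\;0;
\]
since $u\geq 1$ gives $u(1+t^n)-1\geq t^n$, the left-hand side is at least $s^{n+1}-t^{n+1}>0$.

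With (I)--(III) in hand, the sequence $p_j/q_j$ crosses the value $1$ at most once, from above to below, so there exists $k^{*}$ with $p_j\geq q_j$ for $j<k^{*}$ and $p_j\leq q_j$ for $j\geq k^{*}$. A standard conversion --- write $\P(S>r)-\P(T>r)=\sum_{j>r}(q_j-p_j)$ and, when $r<k^{*}-1$, invoke $\sum_j(q_j-p_j)=0$ to switch to a sum over $j\leq r$ --- then yields the desired stochastic domination. I expect the verification of (III) to be the main technical obstacle; the key insight is to replace the unwieldy exact value $uZ_S/Z_T$ by the much cleaner upper bound $u(1-t)/(1-s)$, which is precisely what produces the stated threshold $n_0=\log_{s/t}\bigl(u(1-t)/(1-s)\bigr)$.
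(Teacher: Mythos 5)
Your proof is correct, but it takes a genuinely different route from the paper. The paper's argument introduces an intermediate random variable $M$, a $u$-max-weighted, $n$-truncated, $t$-geometric, and splits the claim into two dominations: $M$ dominates $T$ by inspection of the mass functions (the two differ only in the extra weight $u$ at $0$), and $S$ dominates $M$ via a CDF comparison $\P(M<k)\geq\P(S<k)$, which is reduced to the single index $k=n$ using the monotonicity of $(1-t^k)/(1-s^k)$ in $k$; the resulting inequality $(s/t)^n\geq u\,\frac{1-t}{1-s}\cdot\frac{1-s^n}{1-t^n}$ is where $n_0$ enters. You instead compare $T$ and $S$ directly through a single-crossing likelihood-ratio argument, handling the two endpoint weights separately: the weight at $0$ via $uZ_S\geq Z_T$, the geometric decay in the middle, and the weight at $n$ via the crude normalizer bound $(1-s)Z_S\leq(1-t)Z_T$, which is exactly what produces the same threshold $n_0$ (your target $(s/t)^n\geq uZ_S/Z_T$ is marginally weaker than the paper's, but both follow from $n\geq n_0$). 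The paper's route buys a very short verification in each of its two steps at the cost of introducing an auxiliary distribution; your route avoids the intermediate variable but requires the standard single-crossing-to-stochastic-order conversion, which you carry out correctly, and all the algebraic verifications (your items (I)--(III)) check out. One cosmetic remark: for $n=1$ the ``middle'' range $1\leq j\leq n-1$ is empty, so step (II) is vacuous there; your argument still goes through since (I) and (III) alone give the sign pattern in that case.
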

  For terminology regarding variants of geometric random variables,
  refer to equations (i) to (v) in Section~\ref{sec:background}.
  \begin{proof}%[Proof of Lemma \ref{lem:domination}]
  Let $M$ be a $u$-max-weighted $n$-truncated $t$-geometric variable.
  By inspection of the mass functions of $M$ and $T$, it is apparent
  that $M$ stochastically dominates $T$ for all $u\geq 1$. Thus it remains to
  show that $S$ stochastically dominates $M$.

  For this, we argue that $\m P(M< k)\geq \m P(S< k)$ for all $k\in\bbl{1,n}$,
  which by the formulas in (i) and (iii) of Section~\ref{sec:background} is
  equivalent to
  $$
    \frac{\frac{1-t^{k}}{1-t}}{\frac{1-t^n}{1-t}+ut^n}\geq
    \frac{1-s^{k}}{1-s^{n+1}},
  $$
  which is, in turn, equivalent to
  $$\frac{1-t^{k}}{1-s^{k}}\geq
    \frac{1-t^n+ut^n(1-t)}{1-s^{n+1}}.
  $$
  Since $t<s$, the left side of the latter inequality is a decreasing
  function of $k$, and thus it suffices to prove the inequality for $k=n$.
  In this case the inequality rearranges to
  \begin{align*}
    \left(\frac{s}{t}\right)^n\geq u\cdot \frac{1-t}{1-s}\cdot \frac{1-s^n}{1-t^n}.
  \end{align*}
  By our choice of $n_0$, the inequality holds for all $n\geq n_0$, as desired.
\end{proof}

We use the previous lemma to prove the following tightness result for the
bubble-biased Mallows measure.
\begin{lemma}\label{lem:tightness}
  Fix $u\in [1,\infty)$ and $t\in [0,1)$. For each $n\geq 0$ let $\sigma_n$ be
  a random permutation $\bbb{0,n}$ with law $\BMal_{t,u}$. Then
  $\bigl(\sigma_n^{-1}(0)\bigr)_{n\geq 0}$ is tight.
\end{lemma}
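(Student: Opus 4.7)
The plan is to reduce tightness of $\sigma_n^{-1}(0)$ to a level-crossing analysis of a Markov chain derived from the insertion code. By Lemma~\ref{lem:LehmerBub}, the entries $G_i := \sLtilde(\sigma_n)_i$ are independent, with $G_i$ a $u$-end-weighted $i$-truncated $t$-geometric variable. First I would verify directly from the definition \eqref{eq:Dtilde} of $\sDtilde$ that $\sigma_n^{-1}(0) = p_n$, where $p_0 := 0$ and
\[
  p_i := p_{i-1} + \mathbbm{1}[G_i \geq i - p_{i-1}], \qquad 1 \leq i \leq n.
\]
This follows by induction on $i$ because applying $(\pi^-_{\bbb{i-G_i,i}})^{-1}$ to $p_{i-1}$ leaves it fixed when $p_{i-1} < i - G_i$ and increments it by $1$ when $i - G_i \leq p_{i-1} < i$, while the remaining case $p_{i-1} = i$ is impossible. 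A second easy induction shows that $p_n$ is componentwise non-decreasing in $(G_1, \ldots, G_n)$: raising any $G_k$ can only turn a $0$-indicator into a $1$ at step $k$, and since the resulting gap $p'_{i-1} - p_{i-1}$ can only grow, all future thresholds $i - p_{i-1}$ are weakened too.

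Next I would use Lemma~\ref{lem:domination} to dominate each $G_i$ by a simpler geometric variable. Fix any $s \in (t, 1)$ and let $n_0$ be the constant from that lemma. For $i \geq n_0$, $G_i$ is stochastically dominated by an untruncated geometric variable $T_i$ with $\m P(T_i \geq m) = s^m$ (since a truncated geometric is trivially dominated by its untruncated counterpart). For $i < n_0$, one has $G_i \leq i < n_0$ deterministically. I would then couple the $G_i$ with an independent sequence $T'_i$ where $T'_i := n_0$ for $i < n_0$ and $T'_i := T_i$ for $i \geq n_0$, so that $G_i \leq T'_i$ almost surely. The monotonicity above then yields the pointwise bound $\sigma_n^{-1}(0) \leq p_n(T')$.

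The main step is bounding $\E[p_n(T')]$ uniformly in $n$. For $i < n_0$, the inequality $T'_i = n_0 \geq i - p_{i-1}$ forces $p_i(T') = i$, so $p_{n_0 - 1}(T') = n_0 - 1$. Setting $d_i := i - p_i(T')$ for $i \geq n_0 - 1$ gives $d_{n_0 - 1} = 0$, and the recursion shows $(d_i)_{i \geq n_0 - 1}$ is a Markov chain on $\bbb{0, \infty}$ with $\m P(d \mapsto d + 1 \mid d) = 1 - s^{d+1}$ and $\m P(d \mapsto d \mid d) = s^{d+1}$, where each self-loop at level $d$ corresponds to a $+1$ jump of $p$. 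The number of self-loops at level $k$ before transitioning to $k+1$ is geometric with mean $s^{k+1}/(1 - s^{k+1})$, so the total expected number of $p$-increments from step $n_0$ onwards is at most
\[
  \sum_{k=0}^\infty \frac{s^{k+1}}{1 - s^{k+1}} \leq \frac{s}{(1-s)^2} < \infty.
\]
Hence $\E[p_n(T')] \leq (n_0 - 1) + s/(1-s)^2$ uniformly in $n$, and Markov's inequality yields tightness of $(\sigma_n^{-1}(0))_n$. The main obstacle is the first step: extracting the explicit recursion for $\sigma_n^{-1}(0)$ from the composition formula for $\sDtilde$ and verifying its monotonicity in the insertion code; once these structural facts are in hand, the stochastic domination of Lemma~\ref{lem:domination} and the level-crossing computation above handle the rest.
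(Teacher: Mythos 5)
Your proof is correct, and while it shares the paper's probabilistic inputs, it finishes by a genuinely different argument. Both you and the paper represent $\sigma_n$ through the insertion code via Lemma~\ref{lem:LehmerBub}, exploit the monotonicity of $\sDtilde(\ell)^{-1}(0)$ in the entries of $\ell$, and invoke Lemma~\ref{lem:domination} (plus the trivial removal of truncation) to dominate the code entries beyond $n_0$ by $s$-geometric variables with $t<s<1$. From there the paper takes a structural route: it couples all $n$ simultaneously so that $\sigma_n^{-1}(0)$ is non-decreasing in $n$, dominates by a permutation $\tau_n$ with law $\Mal_s$, and uses the inversion symmetry of the Mallows measure together with Lemma~\ref{lem:geomMal} to identify $\tau_n^{-1}(0)$ as a truncated $s$-geometric, concluding that $\sup_n\sigma_n^{-1}(0)<\infty$ almost surely (the first $n_0$ coordinates being handled by conditioning). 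You instead extract the explicit recursion $p_i=p_{i-1}+\mathbbm{1}[G_i\geq i-p_{i-1}]$ for $\sigma_n^{-1}(0)$ directly from \eqref{eq:Dtilde} (your derivation and the monotonicity check are sound, and the case analysis $p_{i-1}\leq i-1$ rules out the wrap-around), and then bound $\E[p_n(T')]$ uniformly in $n$ by viewing $d_i=i-p_i$ as a non-decreasing birth chain and summing the expected self-loops, $\sum_{k\geq 0}s^{k+1}/(1-s^{k+1})\leq s/(1-s)^2$, finishing with Markov's inequality. What your route buys: it is quantitative (a uniform bound on $\E[\sigma_n^{-1}(0)]$, slightly stronger than tightness), avoids the reduction to the $u=1$ Mallows case and hence the inversion-symmetry argument, and needs no appeal to Strassen's theorem since the coordinatewise quantile coupling is explicit. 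What the paper's route buys: it produces the almost-sure statement $\sup_n\sigma_n^{-1}(0)<\infty$ under the natural monotone coupling, which is the form actually reused in the proof of Proposition~\ref{prop:codingConvergence}, and it recycles the known distribution of $\sigma_n^{-1}(0)$ under the Mallows measure rather than computing from scratch. Either argument suffices for the lemma as stated.
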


\begin{proof}
  We prove the result by finding a coupling of the permutations in which
  $$
    \m P\bigl({\textstyle\sup_n \sigma_n^{-1}(0)=\infty}\bigr)=0,
  $$ from which tightness follows.

  Let $(X_n^\sigma)_{n\geq 0}$ be a independent random variables with
  $X_n^\sigma$ being $u$-end-weighted, $n$-truncated, and $t$-geometric.
  By Lemma~\ref{lem:LehmerBub}, for all $n$ the law of the
  random permutation $\sigma_n=\sDtilde(X_0^\sigma,\ldots,X_n^\sigma)$ is
  $\BMal_{t,u}^{\bbb{0,n}}$. By inspection of the formula \eqref{eq:Dtilde}
  for $\sDtilde$, it follows that the sequence
  $\bigl\{\sigma_n^{-1}(0)\bigr\}_{n}$ is a.s.\! non-decreasing, and therefore
  \begin{equation}\label{eq:supInvPerm}
    \m P\bigl({\textstyle\sup_n\sigma_n^{-1}(0)\geq k}\bigr)=
    \lim_{n\to\infty}\m P\bigl(\sigma_n^{-1}(0)\geq k\bigr),
    \qquad \forall k\geq 0.
  \end{equation}
  When $u=1$, the law of $\sigma_n$ is $\Mal_t$, and the same holds for
  $\sigma_n^{-1}$ by inversion symmetry of the Mallows measure. Since
  $\sigma_n^{-1}(0)=\sL(\sigma_n^{-1})_0$, it follows by Lemma~\ref{lem:geomMal}
  that $\sigma_n^{-1}(0)$ is an $n$-truncated, $t$-geometric random variable.
  Combining this with \eqref{eq:supInvPerm} implies that
  $\sup_n\sigma_n^{-1}(0)$ is a.s.\! finite when $u=1$.

  We extend this result from the case $u=1$ to $u>1$ using a domination
  argument. Fix $s$ such that $t<s<1$ and take $n_0$ to be any integer larger
  than the constant $n_0(s,t,u)$ from Lemma \ref{lem:domination}, thereby
  guaranteeing that an $s$-geometric $n$-truncated random variable
  stochastically dominates a $u$-end-weighted, $n$-truncated, $t$-geometric
  random variable for all $n\geq n_0$. Let $(X_n^\tau)_{n\geq 0}$ be independent
  random variables with $X_n^\tau$ being $n$-truncated and $s$-geometric.
  We couple $(X_n^\sigma)_{n>n_0}$ and $(X_n^\tau)_{n>n_0}$ such that
  $X_n^\sigma\leq X_n^\tau$ a.s.\! for all $n>n_0$ using Strassen's
  Theorem \cite{strassen1965existence}. By applying Lemma~\ref{lem:LehmerBub}
  with $(t,u)=(s,1)$, it follows that for all $n\geq 0$, the random permutation
  $\tau_n=\sDtilde(X_0^\tau,\ldots,X_n^\tau)$ has law $\Mal_s$. Hence
  $\sup_n\tau_n^{-1}(0)$ is a.s.\! finite by the previous analysis of the
  case $u=1$.

  For all $i\in\bbr{0,\infty}$ and all $a_i\in\bbb{0,i}$, the probabilities
  $\m P(X_i^\sigma=a_i)$ and $\m P(X_i^\tau=a_i)$ are positive. We claim that,
  for any sequence of integers $a_0,\ldots,a_{n_0}$ satisfying these conditions,
  we have that
  \begin{multline}\label{eq:condBnd}
    \m P\Bigl({\textstyle\sup_n \sigma_n^{-1}(0)=\infty}
      \Bigm| X_i^\sigma=a_i,\ \forall\ 0\leq i\leq n_0\Bigr)\\
      \leq \m P\Bigl({\textstyle\sup_n \tau_n^{-1}(0)=\infty}
            \Bigm| X_i^\tau=a_i,\ \forall\ 0\leq i\leq n_0\Bigr).
  \end{multline}
  Indeed, this is equivalent to
  \begin{multline*}\label{eq:condBnd2}
    \m P\Bigl({\textstyle\sup_n
      \sDtilde(a_0,\ldots,a_{n_0},X_{n_0+1}^\sigma,\ldots,X_n^\sigma)^{-1}(0)
      =\infty}\Bigr)\\
    \leq \m P\Bigl({\textstyle\sup_n
      \sDtilde(a_0,\ldots,a_{n_0},X_{n_0+1}^\tau,\ldots,X_n^\tau)^{-1}(0)
      =\infty}\Bigr),
  \end{multline*}
  which follows since the function $\sDtilde(\ell_0,\ldots,\ell_n)^{-1}(0)$ is
  non-decreasing in each of its arguments, as seen by inspection of the formula
  \eqref{eq:Dtilde} for $\sDtilde$.

  But we have already shown that
  $\m P\bigl({\textstyle\sup_n \tau_n^{-1}(0)=\infty}\bigr)=0,$
  whereupon the right side of \eqref{eq:condBnd} is zero. Thus the left side
  is zero as well. This implies that
  ${\m P\bigl({\textstyle\sup_n \sigma_n^{-1}(0)=\infty}\bigr)}=0,$
  from which the lemma now follows.
  \end{proof}

 \begin{proof}[Proof of Proposition \ref{prop:codingConvergence}]
    Let $L=(L_i)_{i\in\m Z}$ be a sequence of iid $u$-zero-weighted,
    $t$-geometric random variables. For each $n$, let $\sigma_n$ be a random
    permutation of $\bbb{-n,n}$ with law $\BMal_{t,u}$. The statement of the
    proposition is equivalent to the equality
    \begin{equation}\label{eq:pointwisePMF}
      \lim_{n\to\infty}\m P\Bigl(\bigl(\sL(\sigma_n)_i\bigr)_{i\in I}=
      \ell\Bigr)=\m P\bigl((L_i)_{i\in I}=\ell\bigr),
    \end{equation}
    for all finite intervals $I\subset \m Z$ and for all
    $\ell\in\bbb{0,\infty}^I$.

    By considering the interval $\bbb{0,2n}$ and shifting,
    Lemma~\ref{lem:tightness} implies that
    $$
      \lim_{n\to\infty}\m P\bigl(\sigma_n^{-1}(-n)<\min I\bigr)=1
    $$
    Combining this with Lemma~\ref{lem:conditionalDist} establishes
    \eqref{eq:pointwisePMF}, proving the proposition.
  \end{proof}

%%%%%%%%%%%%%%%%%%%%%%%%%%%
%%%%%%%%%%%%%%%%%%%%%%%%%%%
%%                       %%
%%       Section 6       %%
%%       Main theorem    %%
%%                       %%
%%%%%%%%%%%%%%%%%%%%%%%%%%%
%%%%%%%%%%%%%%%%%%%%%%%%%%%

\section{Proof of main theorem} \label{sec:coloringIntegers}
    Fix $t\in (0,1)$ and $q\geq 3$.
    In Section~\ref{sec:finiteMallowsColoring} we constructed a measure
    $\MalCol_{q,t}$ which is the law of a $k$-dependent $q$-coloring of
    $\m Z$ whenever $(q,k,t)$ satisfies the tuning equation \eqref{eq:qktAna}.
    Here we give a construction, directly on the integers, of a random coloring
    with this law.
    This random coloring will arise as a uniform proper $q$-coloring of a
    certain random infinite graph.
    Thus, we begin by explaining what we mean by a
    `uniform proper $q$-coloring' of an infinite graph.

    First suppose that the graph in question is the nearest-neighbor graph on
    the integers. We define a uniform proper $q$-coloring of this graph to be
    the bi-infinite trajectory of a stationary simple random walk on the
    complete graph (without self-loops) with vertex set $\bbb{1,q}$.

    For a graph $G$ with vertex set $\m Z$, recall that the bubble endpoints of
    $G$ were defined in Section~\ref{sec:background} to be those integers $i$
    such that there do not exist integers $j$ and $k$ adjacent in $G$ with
    $j<i<k$. Say that $G$ is \textbf{good} if it is $q$-colorable, its set of
    bubble endpoints is unbounded from above and below, and consecutive bubble
    endpoints are adjacent in $G$. For any good graph $G$, we define a
    \textbf{uniform proper $\bm{q}$-coloring} of $G$ to be a random coloring
    equal in law to the output of the following algorithm.

    \medskip
    \noindent\textbf{Uniform coloring algorithm.}
      Input: a good graph $G$ with vertex set $\m Z$.

      \begin{enumerate}[(i)]
        \itemsep0.7em
        \item \label{alg:col:stage0}
          Let $(b_e)_{e\in\m Z}$ be an increasing enumeration of the
          bubble endpoints of $G$.

        \item \label{alg:col:stage1}
          Let $(Y_e)_{e\in \Z}$ be a uniform proper $q$-coloring of $\Z$, and
          set $X_{b_e}=Y_e$ for all $e\in \Z$.

        \item  \label{alg:col:stage2}
          Conditional on step \eqref{alg:col:stage1}, for each bubble of $G$,
          choose independently a proper $q$-coloring of the bubble uniformly
          from among those that assign the colors from step
          \eqref{alg:col:stage1} to the bubble endpoints. Output the
          resulting coloring $(X_i)_{i\in\m Z}$.
      \end{enumerate}

    \begin{lemma}\label{lem:unifQcol}
      Let $G$ be a good graph. Let $(X_i)_{i\in \m Z}$ be a uniform proper
      $q$-coloring of $G$. Then for every finite interval $I\subset \m Z$ whose
      endpoints are bubble endpoints of $G$, the coloring $(X_i\colon i\in I)$
      is distributed uniformly on the set of proper $q$-colorings of the
      subgraph of $G$ induced by $I$.
    \end{lemma}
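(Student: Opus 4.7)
The plan is to trace through the Uniform coloring algorithm and verify that every proper $q$-coloring $x$ of $G[I]$ receives the same probability. Write $I=\bbb{b_a,b_b}$ with $a<b$, so that $G[I]$ is the edge-union of the bubbles $B_a,B_{a+1},\ldots,B_{b-1}$, where $B_e$ has endpoints $b_e$ and $b_{e+1}$. Because $G$ is good, the two endpoints of each bubble $B_e$ are adjacent in $G$ (and hence in $G[I]$), so in any proper $q$-coloring of $G[I]$ those two endpoint colors are distinct. Therefore the restriction $c=(x_{b_e})_{a\le e\le b}$ of $x$ to the bubble endpoints is a proper $q$-coloring of the path on $\{b_a,\ldots,b_b\}$.

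Given such an $x$, stage (ii) of the algorithm assigns the endpoint values $c$ with probability equal to the probability that the stationary simple random walk on $K_q$ traces the path $c$. This equals $\frac{1}{q}\left(\frac{1}{q-1}\right)^{b-a}$, which depends on $I$ but not on $x$. Conditional on stage (ii), stage (iii) independently chooses a uniform proper coloring of each bubble $B_e$ among those extending the given endpoint colors $(c_e,c_{e+1})$. The probability of getting the particular extension dictated by $x|_{B_e}$ is $1/N(B_e;c_e,c_{e+1})$, where $N(B_e;\alpha,\beta)$ denotes the number of proper $q$-colorings of $B_e$ whose endpoints receive colors $\alpha\ne\beta$. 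Multiplying, I get
\[
  \P\bigl((X_i)_{i\in I}=x\bigr)
  =\frac{1}{q(q-1)^{b-a}}\prod_{e=a}^{b-1}\frac{1}{N(B_e;x_{b_e},x_{b_{e+1}})}.
\]

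The key point is that $N(B_e;\alpha,\beta)$ depends only on $B_e$, not on the particular pair of distinct colors $(\alpha,\beta)$. Indeed, given any two ordered pairs of distinct colors $(\alpha,\beta)$ and $(\alpha',\beta')$, choose a permutation $\pi$ of $\bbb{1,q}$ with $\pi(\alpha)=\alpha'$ and $\pi(\beta)=\beta'$; applying $\pi$ to every vertex color induces a bijection between proper $q$-colorings of $B_e$ with endpoints $(\alpha,\beta)$ and those with endpoints $(\alpha',\beta')$. Write $N(B_e)$ for this common value. Then
\[
  \P\bigl((X_i)_{i\in I}=x\bigr)=\frac{1}{q(q-1)^{b-a}\prod_{e=a}^{b-1}N(B_e)},
\]
which is independent of $x$. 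Hence the conditional law of $(X_i)_{i\in I}$ is uniform on proper $q$-colorings of $G[I]$, as required.

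The main (and only mildly delicate) obstacle is the color-permutation symmetry step: one must be sure that every proper $q$-coloring of $G[I]$ restricts to a proper path coloring of the endpoints (which uses the adjacency of consecutive bubble endpoints in the definition of a good graph), so that every value of $x$ actually arises with positive probability, and so that the common count $N(B_e)$ argument applies uniformly across all $x$.
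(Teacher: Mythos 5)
Your proof is correct and follows essentially the same route as the paper: decompose the induced subgraph $G_I$ into the bubbles between consecutive bubble endpoints, use step (ii) of the algorithm for the endpoint path and step (iii) for the bubble interiors. The only real difference is presentational: you compute the output probability mass function directly and prove explicitly (via color permutations) that the number of proper colorings of a bubble with prescribed distinct endpoint colors does not depend on the choice of those colors, a fact the paper's proof uses implicitly when it asserts that the endpoint marginals of $(X_i)_{i\in I}$ and of a uniform proper coloring of $G_I$ agree.
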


    \begin{proof}
      Write $G_I$ for the subgraph of $G$ induced by $I$.
      Let $(b_0,b_1,\ldots,b_n)$ be an increasing enumeration of the bubble
      endpoints of $G$ contained in $I$, and let $\bub_i$ be the subgraph of
      $G$ induced by $\bbb{b_{i-1},b_i}$ for $1\leq i\leq n$.
      Then $G_I=\bub_1\cup \cdots\cup \bub_n$, and there is a bijection between
      proper $q$-colorings of $G_I$ and proper $q$-colorings of
      $\bub_1,\ldots,\bub_n$ that agree at their endpoints.

      Thus if $U=(U_i\colon i\in I)$ is a uniform proper $q$-coloring of $G_I$,
      then the conditional law of $U$ given $U_{b_0},U_{b_1},\ldots,U_{b_n}$
      coincides with that of $(X_i\colon i\in I)$ given
      $X_{b_0},X_{b_1},\ldots,X_{b_n}$, by step \eqref{alg:col:stage2} of the
      algorithm. Since the subgraph of $G$ induced by $b_0,b_1,\ldots,b_n$ is a
      path, the laws of $(X_{b_0},X_{b_1},\ldots,X_{b_n})$ and
      $(U_{b_0},U_{b_1},\ldots,U_{b_n})$ are equal by step
      \eqref{alg:col:stage1} of the algorithm. Thus the unconditional laws of
      $U$ and $(X_i\colon i\in I)$ coincide.
    \end{proof}

    Recall the definition of the constraint graph $\Gamma_\sigma$ of a
    permutation $\sigma$ of $\m Z$. In the case when $\sigma$ is a finite
    permutation of $\m Z$, the graph $\Gamma_\sigma$ can be expressed in terms of the
    Lehmer code $\sL(\sigma)$, a sequence in which all but finitely many
    entries vanish. We now extrapolate to a graph defined in terms of a more
    general sequence. Recall the map $\sD$, inverse to $\sL$, defined in
    Section~\ref{sec:background}.

    \begin{dfn*}[of $\Gamma\lbrack\ell\rbrack$]
      Let $\ell\in\bbr{0,\infty}^{\m Z}$ be a sequence for which the zero set
      $(i\in\m Z\colon \ell_i=0)$ is unbounded from above and below.
      Let $(i_k)_{k\in\m Z}$ be an increasing enumeration of the zero set of $\ell$,
      and for each $k\in \m Z$ let $A_k=(\ell_i\colon i\in \bbb{i_k,i_{k+1}})$.
      Then the graph $\Gamma[\ell]$ is defined to be
      $$
        \Gamma[\ell]:=\bigcup_{k\in\m Z}\Gamma_{\sD(A_k)}.
      $$
    \end{dfn*}

    This generalizes the definition of the constraint graph in that,
    if $\sigma$ is a finite permutation of $\m Z$, then
    $\Gamma\bigl[\sL(\sigma)\bigr]=\Gamma_\sigma$. In fact, this follows from
    the next lemma.

    \begin{lemma}\label{lem:localBub}
      For any integers $a\leq i<j\leq b$ and for any sequence
      $\ell\in\bbr{0,\infty}^{\m Z}$ with zero set unbounded from above and
      below, the integers $i$ and $j$ are adjacent in $\Gamma[\ell]$ if and
      only if they are adjacent in the constraint graph of
      $\sD(\ell_a,\ell_{a+1},\ldots,\ell_b)$.
    \end{lemma}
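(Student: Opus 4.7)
The plan is to exhibit both adjacency conditions as local conditions on the Lehmer code entries within the bubble of $\ell$ containing $\{i,j\}$, and show they agree.

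I would first establish a zero-barrier principle: if $\ell_m=0$ for some $a\le m\le b$, then no edge of $\Gamma_\sigma$ straddles $m$, where $\sigma:=\sD(\ell_a,\ldots,\ell_b)$. This follows because Lemma~\ref{lem:LDinv} ensures $\sL(\sigma)_m=\ell_m=0$ on the support of $\sigma$, which forces $\sigma(m)<\sigma(k)$ for every $k>m$ in the support, precluding any edge $(i',j')$ with $i'<m<j'$. The corresponding statement for $\Gamma[\ell]$ is automatic from the block-union definition, since the $i_k$'s are by construction the zeros of $\ell$.

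With these in hand, I split into cases. If some zero of $\ell$ lies strictly between $i$ and $j$, both sides of the equivalence fail: in $\Gamma[\ell]$ because $i$ and $j$ lie in distinct blocks, and in $\Gamma_\sigma$ by the zero-barrier. Otherwise, let $i_k\le i$ and $i_{k+1}\ge j$ be the adjacent zeros of $\ell$, so that $\{i,j\}\subseteq\bbb{i_k,i_{k+1}}=A_k$. The remaining claim is that the adjacency of $i,j$ in $\Gamma_\sigma$ equals the adjacency in $\Gamma_{\sD(A_k)}$. Since adjacency depends only on the relative order of the $\sigma$-values at the positions in $\bbb{i,j}$, I would prove this claim by induction on $|\bbb{a,b}\setminus\bbb{i_k,i_{k+1}}|$. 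The base case $\bbb{a,b}=\bbb{i_k,i_{k+1}}$ is immediate. In the inductive step, one peels the outermost factor $\pi^-_{\bbb{n,n+\ell_n}}$ from the left (or the innermost from the right) corresponding to an index $n$ outside the bubble: when composed with the inner permutation $\rho$ whose values on $\bbb{i,j}$ all exceed $n$, such a factor either fixes those values or decrements them by $1$ on a contiguous subinterval, in either case preserving the relative order on $\bbb{i,j}$; hence the adjacency of $i,j$ is unchanged by the peeling. Iterating until all outside factors are removed reduces the composition to $\sD(A_k)$ up to identity factors, which completes the proof.

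The main obstacle is verifying the relative-order preservation step carefully when $n+\ell_n$ reaches into or past the bubble, so that $\pi^-_{\bbb{n,n+\ell_n}}$ no longer acts on a subinterval disjoint from the $\rho$-image on $\bbb{i,j}$; here one invokes the zero-barrier at $i_k$ (applied to $\rho$) to confine the relevant $\rho$-values to a subinterval on which $\pi^-_{\bbb{n,n+\ell_n}}$ genuinely acts as a uniform shift, so that the relative order on $\bbb{i,j}$ is again preserved.
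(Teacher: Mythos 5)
Your core mechanism is the same as the paper's: split on whether a zero of $\ell$ separates $i$ from $j$, dispose of the separated case via the Lehmer-code zero (no edge can straddle an index where the code vanishes), and otherwise use the cycle factorization of $\sD$ together with the fact that a factor $\pi^-_{\bbb{n,n+\ell_n}}$ with $n\notin\bbb{i,j}$ cannot change the relative order of the values at the positions $i,\ldots,j$. The paper phrases this as ``only the shared middle factor $\bigCircle_{k=i}^{j}\pi^-_{\bbb{k,k+\ell_k}}$ matters''; you phrase it as peeling outer factors one at a time. One structural repair is needed: your induction is set up to terminate at the base case $\bbb{a,b}=\bbb{i_k,i_{k+1}}$, which tacitly assumes the bracketing zeros $i_k,i_{k+1}$ lie inside $\bbb{a,b}$. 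The lemma does not assume this (for instance $\ell$ may be strictly positive on all of $\bbb{a,i}$, so that $i_k<a$), and in that case peeling factors from $\sD(\ell_a,\ldots,\ell_b)$ can never produce $\sD(A_k)$. The fix is to peel \emph{both} $\sD(\ell_a,\ldots,\ell_b)$ and $\sD(A_k)$ down to the common core $\sD(\ell_i,\ldots,\ell_j)$: every index removed on either side lies outside $\bbb{i,j}$, so your single-peel step applies verbatim, and this is exactly the paper's formulation.

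Your closing paragraph, however, misidentifies the ``main obstacle.'' The case where $n+\ell_n$ reaches into or past the bubble needs no zero-barrier confinement, and the confinement you propose is not justified as stated: nothing guarantees that the values $\rho(m)$, $m\in\bbb{i,j}$, all land on one side of $n+\ell_n$, so $\pi^-_{\bbb{n,n+\ell_n}}$ need not act on them as a uniform shift. The correct and sufficient reason is the one you already gave in the previous paragraph: all of these values exceed $n$ (they lie in $[n+1,\infty)$, which the remaining factors preserve), and on the set of values strictly greater than $n$ the cycle $\pi^-_{\bbb{n,n+\ell_n}}$ decrements by one every value in $\{n+1,\ldots,n+\ell_n\}$ and fixes every larger value; this preserves the relative order of any collection of values exceeding $n$, regardless of where $n+\ell_n$ falls, since only the wrap-around point $n$ itself can have its order reversed. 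A further small point, which the paper also elides: you cannot quote Lemma~\ref{lem:LDinv} directly to get $\sL(\sigma)_m=\ell_m$, because $(\ell_a,\ldots,\ell_b)$ need not lie in $\Omega_{\bbb{a,b}}$ (the entries may exceed $b-m$); pad the code with zeros on the right until it becomes admissible, which does not change $\sD$, and then apply the lemma on the enlarged interval.
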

    \begin{proof}
      First suppose that there exists $k\in\bbo{i,j}$ such that $\ell_k=0$.
      Then $i$ and $j$ are non-adjacent in $\Gamma[\ell]$ by definition.
      Likewise, they are non-adjacent in the constraint graph of the
      permutation $\sigma=\sD(a,a+1,\ldots, b)$ since $k$ is a founder of
      $\sigma$ by Lemma~\ref{lem:bubEndpointCoding}.

      Otherwise, there exist $a'\leq i<j\leq b'$ such that $a'$ and $b'$ are
      consecutive zeros of $\ell$. Thus by definition of $\Gamma[\ell]$, the
      integers $i$ and $j$ are adjacent in $\Gamma[\ell]$ iff they are adjacent
      in the constraint graph of $\sigma'=\sD(a',a'+1,\ldots,b')$. Thus,
      the lemma will follow once we show that $i$ and $j$ are adjacent in
      $\Gamma_{\sigma}$ iff they are adjacent in $\Gamma_{\sigma'}$. Recall
      the explicit formula \eqref{eq:D} expressing the function $\sD$ as a
      composition of cycles, from the discussion preceding
      Lemma~\ref{lem:LDinv} in Section~\ref{sec:background}. From this
      formula, it follows that
      $$
        \sigma=\bigCircle_{k=a}^{i-1}\pi^-_{\bbb{k,k+\ell_k}}
          \circ \bigCircle_{k=i}^{j}\pi^-_{\bbb{k,k+\ell_k}}
          \circ \bigCircle_{k=j+1}^{b}\pi^-_{\bbb{k,k+\ell_k}}
      $$
      and
      $$
        \sigma'=\bigCircle_{k=a'}^{i-1}\pi^-_{\bbb{k,k+\ell_k}}
          \circ \bigCircle_{k=i}^{j}\pi^-_{\bbb{k,k+\ell_k}}
          \circ \bigCircle_{k=j+1}^{b'}\pi^-_{\bbb{k,k+\ell_k}}.
      $$
      The cycle $\pi^-_{\bbb{k,k+\ell_k}}$ leaves the relative ordering of
      $\bbb{i,j}$ unchanged whenever $k\not\in \bbb{i,j}$. Thus it is only
      the shared middle factor which determines the relative ordering of $\sigma$ and $\sigma'$ on
      $\bbb{i,j}$. This, in turn, determines whether
      $\sigma(i)<\sigma(k)>\sigma(j)$ for all $i<k<j$, which is equivalent
      to $i$ and $j$ being adjacent in $\sigma$, and respectively for $\sigma'$.
    \end{proof}

    \begin{cor}\label{cor:isGood}
      For $\ell\in\bbr{0,\infty}^{\m Z}$ with zero set unbounded from above
      and below,
      \begin{enumerate}[(i)]
        \item \label{cor:isGood:a} the set of bubble endpoints of
        $\Gamma[\ell]$ is equal to the zero set of $\ell$, and
        \item \label{cor:isGood:b} $\Gamma[\ell]$ is good.
        \end{enumerate}
    \end{cor}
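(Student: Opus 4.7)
The plan is to handle the two parts of the corollary in sequence, exploiting the block decomposition of $\Gamma[\ell]$ together with the characterizations from Lemma~\ref{lem:bubEndpointCoding} and the local adjacency description in Lemma~\ref{lem:localBub}.

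For part \eqref{cor:isGood:a}, I would first observe the easy direction: by construction every edge of $\Gamma[\ell]$ lives inside some block $\bbb{i_k,i_{k+1}}$, so no edge can have one endpoint strictly to the left of a zero of $\ell$ and the other strictly to the right; hence every zero of $\ell$ is a bubble endpoint. For the reverse direction, suppose $\ell_i>0$, and let $k$ be such that $i_k<i<i_{k+1}$. Apply (a translated version of) Lemma~\ref{lem:bubEndpointCoding} to $\sigma:=\sD(A_k)$: the zero set of $\sL(\sigma)=A_k$ in $\bbb{i_k,i_{k+1}}$ equals $\{i_k,i_{k+1}\}$ (by the choice of $k$, together with the automatic vanishing of the Lehmer code at the right endpoint), so the founders of $\sigma$, and hence the bubble endpoints of $\Gamma_\sigma$ by Lemma~\ref{lem:fRecL}, are exactly $\{i_k,i_{k+1}\}$. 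Thus $i$ fails to be a bubble endpoint of $\Gamma_\sigma$, so by Lemma~\ref{lem:localBub} there are neighbors $j<i<k'$ in $\Gamma[\ell]$ and $i$ is not a bubble endpoint of $\Gamma[\ell]$.

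For part \eqref{cor:isGood:b}, two of the three conditions are immediate: the set of bubble endpoints equals the zero set of $\ell$ by part \eqref{cor:isGood:a}, which is unbounded above and below by hypothesis. The substantive point is to show that consecutive bubble endpoints $i_k$ and $i_{k+1}$ are adjacent in $\Gamma[\ell]$. By Lemma~\ref{lem:localBub} it suffices to show adjacency inside $\Gamma_\sigma$, where $\sigma=\sD(A_k)$ is viewed as a permutation of $\bbb{i_k,i_{k+1}}$. Here is the key observation: by Lemma~\ref{lem:bubEndpointCoding}, $i_k$ is the first arrival (as it is the least index at which the Lehmer code vanishes). Let $c$ denote the \emph{second} arrival. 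Every later arrival has strictly larger arrival time than $c$, so in particular no later arrival can witness $c$'s non-founder status from the right; since the only $j<c$ that has arrived is $i_k$, the element $c$ is a founder in the whole interval. But by the argument of part \eqref{cor:isGood:a} the founders of $\sigma$ are precisely $\{i_k,i_{k+1}\}$, forcing $c=i_{k+1}$. Hence at time $2$ no element of $\bbo{i_k,i_{k+1}}$ has arrived yet, and $i_k$ and $i_{k+1}$ are adjacent in $\Gamma_\sigma$ by definition.

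It remains to verify $q$-colorability for $q\geq 3$. Each bubble $\Gamma_\sigma$ is colorable in arrival order, since every new vertex is adjacent to at most two previously-inserted vertices (by Lemma~\ref{lem:colConst}'s inductive structure), so any proper coloring of an initial segment extends. In fact this lets us prescribe any two distinct colors to $i_k$ and $i_{k+1}$ (color them first in the arrival order, then proceed). To color all of $\Gamma[\ell]$ it then suffices to first choose any proper $q$-coloring of the infinite path $(i_k)_{k\in\m Z}$ on the bubble endpoints (possible for $q\geq 2$), and then extend independently over each bubble. The main subtlety in the whole argument is the second-arrival step in part \eqref{cor:isGood:b}, but once one observes that a second arrival strictly inside the interval would be an unwanted interior founder, the rest of the proof is routine bookkeeping.
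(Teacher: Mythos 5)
Your proof is correct and follows essentially the same route as the paper: decompose $\Gamma[\ell]$ into blocks between consecutive zeros and use Lemmas~\ref{lem:bubEndpointCoding}, \ref{lem:fRecL}, \ref{lem:localBub} together with the bubble decomposition and per-bubble colorability. The only difference is cosmetic: where the paper simply asserts that consecutive zeros are adjacent in the block's constraint graph (which yields both the converse of (i) and the adjacency requirement at once), you obtain (i) from the founder-set computation and then justify the adjacency via the neat observation that the second arrival is necessarily a founder and hence the right endpoint---a detail the paper leaves unproved.
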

    \begin{proof}
      By definition of $\Gamma[\ell]$, all zeros of $\ell$ are bubble
      endpoints. Conversely, suppose that $\ell_i>0$. Let $j<i$ be maximal
      such that $\ell_j=0$ and let $k>i$ be minimal such that $\ell_k=0$.
      Then $j$ is adjacent to $k$ in the constraint graph of
      $\sD(\ell_j,\ell_{j+1},\ldots,\ell_k)$, and therefore by
      Lemma~\ref{lem:localBub} $j$ and $k$ are also adjacent in
      $\Gamma[\ell]$. Thus $i$ is not a bubble endpoint, proving part
      \eqref{cor:isGood:a}.

      By part \eqref{cor:isGood:a} the bubble endpoints of $\Gamma[\ell]$
      are unbounded from above and below. By
      Lemma~\ref{lem:constraintGraphDecomp}, $\Gamma[\ell]$ decomposes
      into a collection of finite bubbles joined at their endpoints.
      Now Lemma~\ref{lem:colConst} implies that $\Gamma[\ell]$ is
      $q$-colorable. That consecutive bubble endpoints of $\Gamma[\ell]$ are
      adjacent follows from the corresponding property for a single bubble of
      the constraint graph of a permutation. Thus $\Gamma[\ell]$ is good.
    \end{proof}

    We can now prove the following key result.

    \begin{prop}\label{prop:colorInfiniteGraph}
      Set $u=\frac{q-1}{q-2}$ and let $L=(L_i)_{i\in\m Z}$ be an
      iid sequence of $u$-zero-weighted, $t$-geometric random variables.
      Conditional on $L$, choose a uniform proper $q$-coloring of
      $\Gamma[L]$. Then the (unconditional) law of the resulting coloring
      of $\m Z$ is $\MalCol_{q,t}$.
    \end{prop}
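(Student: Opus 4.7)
The plan is to verify that for every finite interval $I=\bbb{a,b}$, the marginal $X|_I$ has law $P^{\col}_{t,q,|I|}$, since by Lemma~\ref{lem:kolm} these finite-dimensional distributions characterize $\MalCol_{q,t}$. To produce the right approximating sequence, for each $n$ with $I\subseteq\bbb{-n,n}$ let $\sigma_n$ be a $\BMal_{t,u}^{\bbb{-n,n}}$-distributed permutation and let $Y^{(n)}$ be, conditional on $\sigma_n$, a uniform proper $q$-coloring of the finite graph $\Gamma_{\sigma_n}$. By Proposition~\ref{prop:sec3main} together with stationarity (Lemma~\ref{lem:kolm}), the marginal of $Y^{(n)}$ on $I$ equals $P^{\col}_{t,q,|I|}$ for all such $n$. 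Proposition~\ref{prop:codingConvergence} and the Skorokhod representation theorem then allow a coupling on a single probability space in which $\sL(\sigma_n)\to L$ in the product topology almost surely; since the coordinates are integer-valued, for every finite window $J\subset\m Z$ there exists an almost-surely finite $N=N(J)$ beyond which $\sL(\sigma_n)|_J=L|_J$.

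Next, I would fix $\epsilon>0$ and choose $M$ so large that with probability at least $1-\epsilon$ both $\bbb{-M,a}$ and $\bbb{b,M}$ contain a zero of $L$; this is possible because $\m P(L_i=0)>0$. Denote this event by $E_M$ and, on $E_M$, set $C=\max\{i\le a:L_i=0\}$ and $D=\min\{i\ge b:L_i=0\}$, noting $C,D\in\bbb{-M,M}$. By Corollary~\ref{cor:isGood}, $C$ and $D$ are bubble endpoints of $\Gamma[L]$. Taking $J=\bbb{-M,M}$, for $n\ge N(J)$ the codes $\sL(\sigma_n)$ and $L$ agree on $J$, so $C$ and $D$ are also zeros of $\sL(\sigma_n)$. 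Applying Lemma~\ref{lem:localBub} with the window $\bbb{-n,n}$ and using $\sD(\sL(\sigma_n)|_{\bbb{-n,n}})=\sigma_n$ from Lemma~\ref{lem:LDinv}, I obtain that $\Gamma[\sL(\sigma_n)]$ and $\Gamma_{\sigma_n}$ agree on $\bbb{-n,n}$, so $C,D$ are also bubble endpoints of $\Gamma_{\sigma_n}$. A second application of Lemma~\ref{lem:localBub} yields
\[
  \Gamma[L]|_{\bbb{C,D}} \;=\; \Gamma_{\sigma_n}|_{\bbb{C,D}},
\]
both coinciding with the constraint graph of the permutation $\sD(L_C,\ldots,L_D)$ of $\bbb{C,D}$.

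To finish, Lemma~\ref{lem:unifQcol} asserts that, conditional on $L$, the restriction $X|_{\bbb{C,D}}$ is uniform over the proper $q$-colorings of $\Gamma[L]|_{\bbb{C,D}}$. An analogous statement for finite graphs, which follows from the factorization of the uniform distribution on proper colorings across the bubble decomposition (using that consecutive founders of a permutation are adjacent in its constraint graph, together with the fact that the marginal of a uniform proper coloring of a path on any subpath is again uniform), gives that, conditional on $\sigma_n$, $Y^{(n)}|_{\bbb{C,D}}$ is uniform over the proper $q$-colorings of the same graph. Hence the two conditional laws agree and I may couple $X$ and $Y^{(n)}$ so that $X|_{\bbb{C,D}}=Y^{(n)}|_{\bbb{C,D}}$ on $E_M\cap\{n\ge N(J)\}$, whence
\[
  \bigl|\m P(X|_I=x)-\m P(Y^{(n)}|_I=x)\bigr| \;\le\; 2\bigl(\m P(E_M^c)+\m P(n<N(J))\bigr)
\]
for every $x\in\bbb{1,q}^I$. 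Letting $n\to\infty$ and then $\epsilon\to 0$ (with $M\to\infty$) yields $\m P(X|_I=x)=P^{\col}_{t,q,|I|}(x)$, completing the proof. The hardest part will be organizing this coupling argument, in particular verifying the finite-graph analogue of Lemma~\ref{lem:unifQcol} and making sure that the random endpoints $C,D$ are simultaneously bubble endpoints for both $\Gamma[L]$ and $\Gamma_{\sigma_n}$ with high probability.
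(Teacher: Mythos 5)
Your proposal is correct and follows essentially the same route as the paper's proof: approximate by uniform colorings of the constraint graphs of $\BMal_{t,u}$ permutations of $\bbb{-n,n}$ (whose finite marginals are identified via Proposition~\ref{prop:sec3main} and Lemma~\ref{lem:kolm}), couple the Lehmer codes using Proposition~\ref{prop:codingConvergence} and Skorokhod, localize to an interval between zeros of $L$ containing the target window, and match the conditional laws there via Lemmas~\ref{lem:localBub} and \ref{lem:unifQcol}. The only differences are bookkeeping (your explicit $\epsilon$--$M$ truncation versus the paper's random interval $I$ and random $N$), and your explicit flagging of the finite-graph analogue of Lemma~\ref{lem:unifQcol}, which the paper also uses but does not spell out; your sketched justification of it is sound.
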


    We fix $u=\frac{q-1}{q-2}$ for the remainder of the section.

    \begin{proof}[Proof of Proposition~\ref{prop:colorInfiniteGraph}]
      By Corollary~\ref{cor:isGood}\eqref{cor:isGood:b}, $\Gamma[L]$ is a.s.\!
      good. Let $X=(X_i)_{i\in\m Z}$ be a uniform proper $q$-coloring of
      $\Gamma[L]$ and let $Y=(Y_i)_{i\in \Z}$ be a random coloring with law
      $\MalCol_{q,t}$. Let $\sigma_n$ be a random permutation of
      $\bbb{-n,n}$ with law $\BMal_{t,u}$ and let
      $Y^n=(Y^n_i\colon i\in \bbb{-n,n})$ be a uniform proper coloring of
      $\Gamma_{\sigma_n}$. By Proposition~\ref{prop:sec3main}, the sequence
      $Y^n$ is equal in law to $(Y_i\colon i\in\bbb{-n,n})$.

      In Proposition~\ref{prop:codingConvergence} we showed that
      $\sL(\sigma_n)$ converges in distribution to $L$. Thus by the
      Skorohod representation theorem \cite{kallenberg2006foundations},
      there exists a coupling of $(\sigma_n)_{n\geq 0}$ and $L$ such
      that $\sL(\sigma_n)$ a.s.\! converges to $L$. Fix such a coupling.

      Fix a finite interval $J$ of $\m Z$ and let $I$ be the (random) smallest
      interval containing $J$ whose endpoints are zeros of $L$.
      There is a random integer $N$ which is almost surely finite such
      that on the event $N<n$ we have that $\sL(\sigma_n)_i=L_i$ for
      all $i\in I$.

      It follows from our earlier observations that $(X_i)_{i\in I}$ and
      $(Y_i^n)_{i\in I}$ have the same conditional distribution given
      $L$, $\sigma_n$, and the event that $N<n$. Indeed, under this
      conditioning both $(X_i)_{i\in I}$ and $(Y_i^n)_{i\in I}$ are
      uniformly distributed on the set of proper $q$-colorings of the
      subgraph of $\Gamma[L]$ induced by $I$ by Lemmas~\ref{lem:unifQcol}
      and \ref{lem:localBub}. %, and  Corollary~\ref{cor:isGood}\eqref).
      Since $J \subseteq I$ and $N<\infty$ a.s., we deduce that
      $$
        (Y_i)_{i\in J} \eqd (Y^n_i)_{i \in J} \dto{n\to\infty} (X_i)_{i\in J}.
      $$
      The claim follows since $J$ was arbitrary.
    \end{proof}

    The last proposition yields the following construction of a random
    coloring with law $\MalCol$. Let $L$ be the above iid sequence. Assign
    to the zero set of $L$ a uniform proper $q$-coloring of $\m Z$.
    Conditional on these colors and $L$, assign to the intervals between each
    pair of consecutive zeros $i,j$ of $L$ an independent, uniformly random
    proper $q$-coloring of the constraint graph of $\sD(L_i,\ldots,L_j)$
    conditioned to agree at $i$ and $j$ with the colors previously assigned.
    Then by Corollary~\ref{cor:isGood}, the resulting coloring of $\m Z$ is
    conditionally a uniform proper $q$-coloring of the constraint graph of
    $\Gamma[L]$ given $L$, from which it follows by
    Proposition~\ref{prop:colorInfiniteGraph} that the coloring has law
    $\MalCol$. It remains to show, using this construction, that the colorings
    can be expressed both as finitary factors of iid processes and as
    functions of countable Markov chains. This is relatively routine, and
    we provide the details below.

  \begin{prop}\label{prop:Markov}
    There exists a countable state space $S$, a function $h:S \to \bbb{1,q}$,
    and a Markov process $(Y_i)_{i \in \Z}$ on $S$ such that the process
    $(X_i)_{i\in \Z} = \bigl(h(Y_i)\bigr)_{i\in \Z}$ has law $\MalCol_{q,t}$.
    Moreover the return time of each state of $S$ has exponential tail.
  \end{prop}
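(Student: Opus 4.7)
The plan is to construct the countable Markov chain by directly transcribing the bubble-by-bubble description of the coloring given just after the proof of Proposition~\ref{prop:colorInfiniteGraph}. Let $\mathcal B$ denote the set of finite sequences $B=(B_0,\ldots,B_{m-1})$ with $m\geq 2$, $B_0=B_{m-1}=0$, and $B_k>0$ for $0<k<m-1$; these are precisely the possible $L$-sequences running over a single bubble of $\Gamma[L]$. For each $B\in\mathcal B$ let $\mathcal C(B)$ be the finite set of proper $q$-colorings of $\Gamma_{\sD(B)}$, and set
\[
S:=\bigl\{(B,C,j):B\in\mathcal B,\ C\in\mathcal C(B),\ 0\leq j\leq |B|-2\bigr\},
\]
which is countable. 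Let $h(B,C,j):=C_j$, and in the construction of the coloring, define $Y_i:=(B^{(e)},C^{(e)},i-b_e)$, where $b_e$ is the largest bubble left-endpoint of $\Gamma[L]$ with $b_e\leq i$, and $B^{(e)},C^{(e)}$ are the $L$-sequence and coloring of the bubble $[b_e,b_{e+1}]$, using the convention that each shared endpoint belongs to the bubble it starts.

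By translation equivariance, $Y$ is stationary, and by construction $h\circ Y$ is the coloring produced by the algorithm following Proposition~\ref{prop:colorInfiniteGraph}, which has law $\MalCol_{q,t}$. For the Markov property, the transition from $(B,C,j)$ to $(B,C,j+1)$ inside a bubble is deterministic, while from $(B,C,|B|-2)$ the next state is $(B',C',0)$, where $B'$ is obtained by drawing iid $u$-zero-weighted $t$-geometric variables until the first $0$, a new right-endpoint color $c'$ is drawn uniformly from $\{1,\ldots,q\}\setminus\{C_{|B|-1}\}$ (one step of the simple random walk on the complete graph $K_q$), and $C'$ is drawn uniformly from proper colorings of $\Gamma_{\sD(B')}$ with $C'_0=C_{|B|-1}$ and $C'_{|B'|-1}=c'$. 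All fresh randomness used at a transition is independent of the past given $Y_i$, so $Y$ is Markov.

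For the exponential tail on return times, it suffices to consider $s_*=(B_*,C_*,0)$, since a return to $(B_*,C_*,j_*)$ occurs exactly $j_*$ steps after a visit to $(B_*,C_*,0)$. Starting from $s_*$, the return time decomposes as $T=\sum_{k=1}^{N}(|B^{(k)}|-1)$, where $B^{(1)},B^{(2)},\ldots$ are the successive bubbles traversed and $N$ is the first index with $(B^{(N)},C^{(N)})=(B_*,C_*)$. Each increment $|B^{(k)}|-1$ is a geometric random variable with success probability $\P(L=0)>0$ and hence has exponential tail uniformly in the past. The skeleton chain $\tilde Y_k:=(B^{(k)},C^{(k)})$ is itself Markov, with $\tilde Y_{k+1}$ depending on the past only through the end color $C^{(k)}_{|B^{(k)}|-1}$; the sequence of end colors evolves as the simple random walk on $K_q$, which for $q\geq 3$ is irreducible and aperiodic. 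Two steps of this walk reach any prescribed color with probability at least some $\epsilon_q>0$, and combined with the strictly positive probability of then drawing the specific bubble data $(B_*,C_*)$, this yields a uniform $p_*>0$ with $\P(\tilde Y_{k+2}=(B_*,C_*)\mid \tilde Y_k)\geq p_*$, whence $N$ has exponential tail. A standard estimate for a geometric sum of exponentially-decaying increments then transfers this to an exponential-tail bound on $T$.

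The main obstacle is the last step: because successive bubbles share an endpoint color, the skeleton chain $\tilde Y$ is genuinely dependent, so one cannot directly conclude that the matching bubble $(B_*,C_*)$ appears geometrically in time. The remedy is the uniform two-step lower bound from the ergodicity of the $K_q$-SRW, which forces the appropriate endpoint color into position often enough for the return-time estimate to close. All remaining steps (countability of $S$, Markov property, and matching marginal) are routine verifications of the construction.
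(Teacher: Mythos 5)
Your construction is essentially the paper's own proof. The paper likewise takes the state at time $i$ to be the data of the bubble of $\Gamma[L]$ containing $i$ (there encoded as the offsets to the nearest zeros of $L$ on either side, the induced subgraph, and the restriction of the coloring to that bubble), with $h$ reading off the color at the current position, and it proves the Markov property by the same conditional-independence-across-a-bubble-endpoint argument; storing the Lehmer data $B$ rather than the induced graph is an immaterial variation. The extra detail you give for the exponential tail of return times (which the paper dismisses as straightforward) is welcome, but one step is false as stated: the claimed uniform bound $\P\bigl(\tilde Y_{k+2}=(B_*,C_*)\mid \tilde Y_k\bigr)\geq p_*$ fails whenever the right-endpoint color of bubble $k$ already equals $C_{*,0}$, because the left-endpoint color of bubble $k+2$ is exactly \emph{one} step of the walk on $K_q$ from that color, and one step of this walk never stays at the same color, so the conditional probability is then zero. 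The fix is immediate: minorize over three skeleton steps instead of two, since the left-endpoint color of bubble $k+3$ is two walk steps from the right-endpoint color of bubble $k$, and two steps of the walk on $K_q$ with $q\geq 3$ reach any prescribed color with probability at least $(q-2)/(q-1)^2$; combining this with the fixed positive probability of then drawing the bubble data $(B_*,C_*)$ gives the uniform minorization, and the rest of your argument (iid geometric bubble lengths plus the union bound over $N\leq \delta n$) goes through unchanged.
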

  \begin{proof}
    Let $L$ be an iid sequence of $u$-zero-weighted, $t$-geometric random variables. Conditional on $L$, let $X=(X_i)_{i\in\m Z}$ be a uniform proper $q$-coloring of $\Gamma[L]$. By the previous proposition, $X$ has law $\MalCol_{q,t}$. It remains to express $X$ as a function of a Markov process with the stated properties.

    For each $k \in \Z$, let $X^k$ and $L^k$ denotes the shifted sequences
    $$
      X^k=(X^k_i)_{i\in \Z} = (X_{i-k})_{i \in \Z} \qquad
      \qquad \text{ and } \qquad
      L^k=(L^k_i)_{i\in \Z} = (L_{i-k})_{i \in \Z}.
    $$
    For each $k$, let $f^+_k = \min \{i>0\colon L^k_i=0\}$ and $f^-_k=\max\{i\leq 0\colon L^k_i=0\}$. Let $G_k$ be the subgraph of $\Gamma[L^k]$ induced by $\bbb{f^-_k,f^+_k}$. For each $k\in \Z$, let $Y_k$ be the tuple
    \[Y_k = \left(-f^-_k, f^+_k, G_k, \bigl(X^k_j\bigr)_{j\in\bbb{f^-_k,f^+_k}}\right).\]
    The tuple $Y_k$ takes values in the set $S'$ of tuples $(f^1,f^2,G,x)$, where $f^1$ and $f^2$ are non-negative integers, $G$ is a graph with vertex set $\bbb{-f^1,f^2}$, and $x$ is a $q$-coloring of $G$. Note that $S'$ is countable. Let $S$ be the support of $Y_0$ on $S'$. We define $h:S\to \bbb{1,q}$ by setting $h(f^1,f^2,G,x)=x_0$, so that $(X_i)_{i\in \Z}=\bigl(h(Y_i)\bigr)_{i\in \Z}$ as desired.

    Clearly $Y$ is stationary. To prove that $Y$ is Markov, it suffices to show that $(Y_i)_{i>0}$ and $(Y_i)_{i<0}$ are conditionally independent given $Y_0$. Since $f_0^+$ is the location of the first bubble endpoint of $\Gamma[L]$ to the right of the origin, it follows from the definition of a uniform proper $q$-coloring that $(Y_i)_{i>f_0^+}$ is conditionally independent of $(Y_i)_{i<f_0^+}$ given $Y_{f_0^+}$. Now $Y_{f_0^+}$ determines the sequence $Y_0,Y_1,\ldots,Y_{f_0^+}$, as does $Y_0$, so therefore $(Y_i)_{i>0}$ and $(Y_i)_{i<0}$ are conditionally independent given $Y_0$. Thus $Y$ is a Markov process.

    That the return times have exponential tails follows in a straightforward manner.\qedhere
  \end{proof}

  \begin{prop}\label{prop:ffiid}
    There exists a ffiid process with law $\MalCol_{q,t}$ whose coding radius has exponential tail.
  \end{prop}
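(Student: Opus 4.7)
The plan is to realize $\MalCol_{q,t}$ as a ffiid by implementing the three-step bubble construction of Proposition~\ref{prop:colorInfiniteGraph} as an explicit factor of iid uniform variables. Attach to each site $i\in\m Z$ an independent triple $(U_i,V_i,W_i)$ of uniform $[0,1]$ random variables, collectively forming the iid input. In \emph{Layer 1}, let $L_i$ be the inverse-CDF image of $U_i$ under the $u$-zero-weighted $t$-geometric law; this produces the iid sequence $L$ of Proposition~\ref{prop:colorInfiniteGraph} together with the graph $\Gamma[L]$ and its bubble endpoints $(b_e)_{e\in\m Z}$. In \emph{Layer 2}, use $(V_{b_e})_{e\in\m Z}$ to drive a two-sided stationary simple random walk on $K_q$ whose trajectory colors the bubble endpoints, as required in step (ii) of the uniform coloring algorithm. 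In \emph{Layer 3}, for each bubble $\bbb{a,b}$, use $(W_i)_{a<i<b}$ to deterministically sample a uniform proper $q$-coloring of that bubble conditional on the two endpoint colors, for instance by inverse-CDF over a canonical enumeration of all conditional proper colorings. By Proposition~\ref{prop:colorInfiniteGraph}, the resulting coloring has law $\MalCol_{q,t}$.

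Now bound the coding radius $R$ of $X_0$. Let $a_-\le 0<a_+$ be the bubble endpoints bracketing the origin. Since $\m P(L_0=0)=p:=u/(u+t/(1-t))$ is a fixed positive constant, $|a_\pm|$ have geometric tails. For Layer 2 I use coupling from the past on $K_q$: interpret $V_{b_{e+1}}$ as a proposed next state $J\in\{1,\ldots,q\}$, with any trajectory at state $Y_e^i\ne J$ setting $Y_{e+1}^i=J$ and otherwise consulting an auxiliary bit to sample uniformly from $\{1,\ldots,q\}\setminus\{Y_e^i\}$; this gives the correct SRW conditional law and merges any two distinct states at the next step with probability at least $(q-2)/q$. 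Let $N_\pm$ be the smallest integer such that all $q$ coupled trajectories started $N_\pm$ bubble-endpoint steps before $a_\pm$ coalesce by time $a_\pm$; a union bound over the $\binom{q}{2}$ pairs combined with the per-step merge bound shows $N_\pm$ has exponential tail. Then $X_{a_\pm}$ is a function of finitely many $V_{b_e}$ nearby, and Layer 3 determines $X_0$ from $(W_i)_{a_-<i<a_+}$.

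Consequently $R\le |a_-|+|a_+|+S_-+S_+$, where $S_\pm=\sum_{e=1}^{N_\pm}B_e^\pm$ and the $B_e^\pm$ are iid geometric bubble-endpoint gaps independent of $N_\pm$. The main obstacle is verifying that these compound sums have exponential tails. This reduces to the moment generating function identity
\[\m E\bigl[e^{\lambda S_\pm}\bigr]=\m E\Bigl[\bigl(\m E[e^{\lambda B_1^\pm}]\bigr)^{N_\pm}\Bigr],\]
which is finite for $\lambda>0$ small enough that $\m E[e^{\lambda B_1^\pm}]$ stays close to $1$ and remains within the domain of convergence of the exponential moment of $N_\pm$. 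Markov's inequality then yields $\m P(S_\pm>r)\le Ce^{-cr}$, and combining with the tail bounds on $|a_\pm|$ gives $\m P(R>r)\le C'e^{-c'r}$. Stationarity and color symmetry of the output are immediate from the translation equivariance of the construction and the $S_q$-symmetry of SRW on $K_q$, completing the proof.
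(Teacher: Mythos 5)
Your proposal is correct and follows essentially the same route as the paper: generate the iid sequence $L$ site-by-site, color the path of bubble endpoints by a coupling-from-the-past scheme whose lookback has exponential tail, fill each bubble from local seeds, and invoke Proposition~\ref{prop:colorInfiniteGraph}, bounding the coding radius by a geometric compound sum. The only differences are cosmetic: the paper's finitary coloring of the endpoint path uses the two-candidate $(Z_i^1,Z_i^2)$ regeneration trick rather than your grand coupling, and it uses a single uniform seed per bubble rather than per-site seeds, while your explicit moment-generating-function estimate just spells out what the paper calls straightforward.
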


Before proving Proposition \ref{prop:ffiid}, we show how to produce a ffiid uniform proper $q$-coloring of $\Z$ with exponential tail on the coding radius, using a simple application of the technique of coupling from the past \cite{propp1998coupling}.

Let $(Z_i)_{i\in \Z}$ be an iid sequence, where each $Z_i$ is a chosen uniformly from the set $\{(a,b) \in \bbb{1,q}^2 : a\neq b\}$ of ordered pairs of distinct elements of $\bbb{1,q}$.

We claim that there is almost surely a unique sequence $X=(X_i)_{i\in \Z}$ satisfying the constraints
\begin{equation}\label{eq:zcolor} X_i = \begin{cases} Z_i^1 &\text{if } Z_i^1 \neq X_{i-1}\\
Z_i^2 &\text{if } Z_i^1 = X_{i-1}, \end{cases}\end{equation}
and furthermore that $X$ can be computed as a finitary factor of $Z$ with an exponential tail on the coding radius. Given its existence, it is easily seen that $X$ is a uniform proper $q$-coloring of $\Z$.

First, notice that if $Z_i^1 \notin \{Z_{i-1}^1,Z_{i-1}^2\}$, then we must have $X_i=Z_i^1$. Thus, it is possible to compute $X_i$ for arbitrary $i$ by first finding the maximal $T_i\leq i$ such that $Z_{T_i}^1 \notin \{Z_{T_i-1}^1,Z_{T_i-1}^2\}$, setting $X_{T_i}=Z^1_{T_i}$, and then computing $X_{j}$ for all $T_i \leq j \leq i$ by applying the recurrence \eqref{eq:zcolor}. This shows that there is almost surely a unique solution $X$ of \eqref{eq:zcolor}, and that $X$ can be computed as a finitary factor of $Z$ with coding radii $(i-T_i)_{i\in \Z}$. 
Finally, we observe that $i-T_i$ is a geometric random variable, since
\[\P(i-T_i \geq n) = \prod_{k=1}^n \P\Bigl(Z_{i-j+1}^1 \in \bigl\{Z_{i-j}^1,Z_{i-j}^2\bigr\}\Bigr)  = \left(\frac{2}{q}\right)^n. \]

\begin{proof}[Proof of Proposition \ref{prop:ffiid}]
  Consider the iid sequence $(Z_i,U_i,L_i)_{i\in \m Z}$ where $Z_i$ is chosen uniformly from the set $\{(a,b)\in \bbb{1,q}^2\colon a\not=b\}$, $U_i$ is chosen uniformly from $[0,1]$, and $L_i$ is a $u$-zero-weighted, $t$-geometric random variable. We construct the desired process in two steps. In the first step, we assign to $(i\in \m Z\colon L_i=0)$ a uniform proper $q$-coloring by applying the above procedure to $(Z_{i}\colon L_i=0)$.

  In the second step we assign colors to $(i\in \m Z\colon L_i>0)$. For such $i$ let $$f_i^-=\max \{j<i\colon L_j=0\}\quad\text{ and }\quad f_i^+=\min \{j>i\colon L_j=0\};$$ note that $i\in\bbo{f_i^-,f_i^+}$ and that $f_i^{\pm}$ were assigned colors in the previous step. Conditional on the previous step, let $X^{\bbb{f_i^-,f_i^+}}$ be a uniform proper $q$-coloring of the constraint graph of $\sD\bigl((\ell_k)_{k\in\bbb{f_i^-,f_i^+}}\bigr)$ consistent with the colors assigned to $f_i^{\pm}$. Assume that $X^{\bbb{f_i^-,f_i^+}}$ is defined on the probability space $[0,1]$ and assign to $i$ the color $X_i^{\bbb{f_i^-,f_i^+}}(U_{f_i^-})$, i.e., $U_{f_i^-}$ is used as a seed to generate the random coloring of $\bbb{f_i^-,f_i^+}$.

  It is easy to see that the coloring of $\m Z$ thus obtained is a finitary factor of the sequence $(Z_i,U_i,L_i)_{i\in\m Z}$ and that its coding radius has exponential tail. By Lemma~\ref{lem:unifQcol} it follows that, conditional on $L$, the coloring thus produced is a uniform proper $q$-coloring of $\Gamma[L]$. 
  Thus by  Proposition~\ref{prop:colorInfiniteGraph}, the coloring has law $\MalCol_{q,t}$.
\end{proof}

Recall the tuning equation \eqref{eqn:qkt}, which is
$$
 qt(t^k-1)=(t+1)(t^{k+1}-1).
$$

%%%%%%%%%%%%%%%%%%%%%%%%%%%
%%%%%%%%%%%%%%%%%%%%%%%%%%%
%%                       %%
%%       Proof of        %%
%%       main theorem    %%
%%                       %%
%%%%%%%%%%%%%%%%%%%%%%%%%%%
%%%%%%%%%%%%%%%%%%%%%%%%%%%

\begin{proof}[Proof of Theorem \ref{main}]
  Combine Propositions~\ref{prop:sec3main}, \ref{prop:reverseMain}, \ref{prop:Markov}, and \ref{prop:ffiid} to conclude that if for integers $q\geq 3$ and $k\geq 1$ there exists $t\in (0,1)$ satisfying the tuning equation \eqref{eqn:qkt}, then the theorem holds in the case $(k,q)$.

  That such a $t$ exists for $(k,q)=(1,5),\ (2,4),\ (3,3),$ and all larger $k$ and $q$ follows since these integers satisfy $qk>2(k+1)$, which implies that the polynomial
  $$
    qt(1-t^k)-(t+1)(1-t^{k+1})
  $$
  is negative at $t=0$, vanishes at $t=1$, and has negative derivative there. Thus it has a root in $(0,1)$, providing the desired solution of the tuning equation \eqref{eqn:qkt}.
\end{proof}

%%%%%%%%%%%%%%%%%%%%%%%%%%%
%%%%%%%%%%%%%%%%%%%%%%%%%%%
%%                       %%
%%       Section 7       %%
%%       painting and    %%
%%       conditioning    %%
%%%%%%%%%%%%%%%%%%%%%%%%%%%
%%%%%%%%%%%%%%%%%%%%%%%%%%%

\section{Painting algorithm and conditioning} \label{sec:paint}

  The primary purpose of this section is to verify correctness of the Painting Algorithm from the introduction.
  Recall that its input consists of positive integers $q$ and $k$ satisfying
  $qk>2(k+1)$ and its output is a random $q$-coloring of $\m Z$, which we claim is $k$-dependent.

  \begin{prop}\label{prop:finitary}\
    \begin{enumerate}[(i)]
      \item For all positive integers $q$ and $k$ satisfying $qk>2(k+1)$, there
      exists a unique $t=t(q,k)\in (0,1)$ such that $(q,k,t)$ satisfies the
      tuning equation \eqref{eqn:qkt}.
      \item The output of the Painting Algorithm has law
      $\MalCol_{q,t(q,k)}$.
    \end{enumerate}
  \end{prop}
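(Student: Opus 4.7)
\emph{Part (i).} Let $f(t) := qt[k]_t - [2]_t[k+1]_t$. Expanding using $[n]_t = 1 + t + \cdots + t^{n-1}$ gives
\[
f(t) = -1 + (q-2)(t + t^2 + \cdots + t^k) - t^{k+1}.
\]
Evaluating endpoints yields $f(0) = -1 < 0$ and $f(1) = k(q-2) - 2 = qk - 2(k+1) > 0$ by hypothesis, while $f(t) \sim -t^{k+1} \to -\infty$ as $t \to \infty$. The Intermediate Value Theorem then produces roots in both $(0,1)$ and $(1,\infty)$, so $f$ has at least two positive real roots. Since $q\geq 3$ makes every middle coefficient $q-2$ strictly positive, the coefficient sign pattern $(-,+,+,\ldots,+,-)$ has exactly two sign changes, and Descartes' rule of signs caps the number of positive real roots at two. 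Hence the root in $(0,1)$ is unique. Finally, \eqref{eqn:qkt} and \eqref{eq:qktAna} differ only by the nonvanishing factor $(1-t)$ on $(0,1)$, so they are equivalent there.

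\emph{Part (ii).} The plan is to show that the output of the Painting Algorithm has law $\MalCol_{q,t}$ by matching it with the finitary-factor-of-iid construction in the proof of Proposition~\ref{prop:ffiid}, which has this law by Proposition~\ref{prop:colorInfiniteGraph}. Stationarity, color symmetry, and the proper-coloring property follow immediately from inspecting the algorithm (the Bernoulli process and Stage~1 SRW are stationary; each Stage~2 assignment $X_K \notin \{X_a, X_b\}$ preserves the constraint $X_i \neq X_{i+1}$ by induction on the depth of recursion). The substantive content is to verify the finite-dimensional distributions.

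The heart of the argument is the analysis of a single Stage~2 recursion on a bubble $[a,b]$, meaning an interval with $B_a = B_b = 1$ and $B_i = 0$ for $a<i<b$. I will show by induction on $b-a$ that the Stage~2 procedure generates a joint distribution on (arrival permutation $\sigma$, coloring $X|_{[a,b]}$) such that conditional on $\sigma$ the coloring is a uniformly random proper $q$-coloring of $\Gamma_\sigma$ extending the Stage~1 colors $X_a \neq X_b$. The coloring uniformity is a straightforward recursive count: conditional on $\sigma$, each step contributes a factor of $(q-2)$ to both the number of proper extensions (since removing the split vertex decomposes a bubble into two smaller bubbles sharing an endpoint, and by Lemma~\ref{lem:colConst} the number of proper $q$-colorings of a single bubble on $n$ vertices with prescribed distinct endpoint colors is $(q-2)^{n-2}$) and to the probability of a given extension, giving the required uniformity. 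For the permutation distribution, the weight $\P(K = \ell) \propto t^\ell$ matches the position-of-minimum distribution in a Mallows permutation (via $\P(\sigma^{-1}(1) = k) \propto t^{k-1}$ on an interval of length $n$), so the recursive structure assembles the correct Mallows-type weighting on arrival orders.

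The final step is to average over $\sigma$ and over the Bernoulli skeleton $B$, and to check this reproduces $\MalCol_{q,t}$. Using Lemma~\ref{lem:colConst} and the identity \eqref{eq:bMalCol}, the within-bubble marginal law of $X|_{[a,b]}$ becomes a uniform proper $q$-coloring of $\Gamma_\sigma$ for $\sigma$ distributed as $\BMal_{t,u}$ conditioned on $[a,b]$ forming a single bubble, with $u = (q-1)/(q-2)$. The specific parameter $s = t(q-2)/(q-1-t)$ is then precisely calibrated so that aggregating over the Bernoulli bubble structure (via the Stage~1 SRW-on-$K_q$ skeleton) reproduces the global law of a uniform proper $q$-coloring of $\Gamma[L]$ for $L$ an iid sequence of $u$-zero-weighted $t$-geometric variables. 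The main obstacle will be this final aggregation: even though the Painting Algorithm's bubble structure is supported on intervals between Bernoulli$(s)$ marks while the $L$-construction's bubble structure is supported on intervals between $L_i = 0$ positions (which have density $1-s$), the resulting cylinder probabilities on $\{1,\ldots,q\}^{\Z}$ must coincide. This matching can be verified via the cylinder characterization of $\MalCol_{q,t}$ in Section~\ref{sec:finiteMallowsColoring} and the consistency/$k$-dependence recursions already derived there; once established, Proposition~\ref{prop:colorInfiniteGraph} completes the proof.
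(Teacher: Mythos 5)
Your part (i) is correct and complete: expanding $f(t)=qt[k]_t-[2]_t[k+1]_t=-1+(q-2)(t+\cdots+t^k)-t^{k+1}$, checking $f(0)<0<f(1)$ and $f(t)\to-\infty$, and then capping the number of positive roots at two by Descartes' rule forces the root in $(0,1)$ to be unique; the factor $(1-t)$ handles the passage between \eqref{eq:qktAna} and \eqref{eqn:qkt}. This is a genuinely different and more elementary route than the paper's Lemma~\ref{lem:qktSol}, which instead computes $f_{q,k}''<0$ on $[0,1]$ under $qk>2(k+1)$ and counts zeros by strict concavity together with the root at $t=1$; both work, and yours avoids the derivative analysis (the paper's lemma also proves the converse direction, which the proposition does not need).

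Part (ii), however, is an outline whose central step is missing. The paper's proof works at the level of random graphs: it introduces $\Gamma_{\text{alg}}$ (integers $i<j$ adjacent iff they are the first two elements of $\bbb{i,j}$ to be colored), notes that conditional on $\Gamma_{\text{alg}}$ the output is a uniform proper $q$-coloring of it, and then proves the exact equality in law $\Gamma_{\text{alg}}\eqd\Gamma[L]$, after which Proposition~\ref{prop:colorInfiniteGraph} finishes. That equality is established in two exact steps: first, the Stage-1 marked set (the bubble endpoints of $\Gamma_{\text{alg}}$) must be a Bernoulli process \emph{equal in law} to the zero set of $L$, which is where the computation $\P(L_0\ge 1)=\tfrac{t/(1-t)}{u+t/(1-t)}=\tfrac{t(q-2)}{q-1-t}$ ties the algorithm's parameter $s$ to $L$; second, conditionally on two integers being bubble endpoints, the induced subgraphs in both constructions are constraint graphs of $\Mal_t$-distributed permutations --- for $\Gamma_{\text{alg}}$ this is proved by reading off the arrival permutation's insertion code, whose entries the algorithm's choices make independent truncated $t$-geometrics so that Lemma~\ref{lem:LehmerBub} applies, and for $\Gamma[L]$ it uses Lemma~\ref{lem:localBub} plus the fact that a $u$-zero-weighted $t$-geometric conditioned to be positive does not depend on $u$. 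Your proposal gestures at both steps but proves neither: the ``position-of-minimum'' remark only addresses the first insertion, not the joint law of the whole recursive splitting, which is what is needed to identify the within-bubble constraint graph with a Mallows one; and, more seriously, at the point where the two bubble structures must be matched you record a density discrepancy (marks of density $s$ versus zeros of $L$ of density $1-s$) and simply assert that ``the resulting cylinder probabilities must coincide.'' That assertion is the entire content of part (ii) and is not argued; the proof hinges on the marked set and the zero set of $L$ being equal in law (so the discrepancy you noticed is a red flag forcing you to pin down exactly which Bernoulli parameter the marks carry, not something that washes out at the level of cylinder probabilities). Finally, the suggestion to verify the matching ``via the consistency/$k$-dependence recursions'' of Section~\ref{sec:finiteMallowsColoring} points in the wrong direction: the identity between the Painting Algorithm's output and $\MalCol_{q,t}$ has nothing to do with the tuning equation and is not accessible through those recurrences --- it is obtained by matching the random graphs exactly and then transferring to colorings via conditional uniformity and Proposition~\ref{prop:colorInfiniteGraph}.
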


  The other purpose of this section is establish that if one conditions the
  $1$-dependent $q$-coloring from the main theorem to only use colors in
  $\bbb{1,q-1}$,
  this results in the $2$-dependent $(q-1)$-coloring from the theorem, and no
  other pairs of colorings from the theorem are related in this manner.
  This is equivalent to the following.

  \begin{prop}\label{prop:condVals}
    The only pairs $(k,q)$ and $(k',q')$ such that there exists $t\in (0,1)$
    for which $(q,k,t)$ and $(q',k',t)$ both satisfy the tuning equation
    \eqref{eqn:qkt} are
    $$
      (k,q)=(1,q)\hspace{.5cm}\text{ and }\hspace{.5cm}(k',q')=(2,q-1),\qquad
      q\geq 5.
    $$
  \end{prop}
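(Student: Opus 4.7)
The plan is to recast the tuning equation \eqref{eqn:qkt} as $q = f_k(t) := (1+t)[k+1]_t/(t[k]_t)$, so that for each pair $(k,q)$ satisfying the tuning equation, the value $t \in (0,1)$ is uniquely determined. The case $k = k'$ is therefore immediate (it forces $q = q'$), and one may assume $k < k'$. Writing $f_k(t) = (1+t) + (1+t)/(t[k]_t)$, the difference telescopes to
$$g_{k,k'}(t) := f_k(t) - f_{k'}(t) = \frac{(1+t)\,t^{k-1}\,[k'-k]_t}{[k]_t\,[k']_t}.$$
The proposition then amounts to showing that $g_{k,k'}(t)$ takes a positive integer value on $(0,1)$ if and only if $(k,k') = (1,2)$, in which case it is identically $1$ (yielding $q' = q - 1$). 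The $q \geq 5$ restriction comes from matching the integer ranges of $f_1$ and $f_2$, namely $(4,\infty)$ and $(3,\infty)$.

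For $(k,k') = (1,2)$, direct substitution gives $g_{1,2}(t) = (1+t)/(1+t) = 1$, confirming the stated family of pairs. For $k = 1$ and $k' \geq 3$, a brief algebraic manipulation yields $g_{1,k'}(t) - 1 = t(1 - t^{k'-2})/(1 - t^{k'})$, which lies strictly in $(0,1)$ on $t \in (0,1)$. Combined with the immediate upper bound $g_{1,k'}(t) = (1+t)[k'-1]_t/[k']_t < 2$ (since $(1+t) < 2$ and $[k'-1]_t < [k']_t$), this pins $g_{1,k'}(t)$ inside $(1,2)$, excluding any integer value.

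The main step, and the principal obstacle, is to show $g_{k,k'}(t) < 1$ for all $t \in (0,1)$ when $k \geq 2$ and $k' > k$. Using the identity $[k']_t = [k]_t + t^k[k'-k]_t$, the desired inequality $[k]_t[k']_t > (1+t)\,t^{k-1}[k'-k]_t$ rearranges to
$$[k]_t^2 > t^{k-1}[k'-k]_t\bigl(1 + t - t[k]_t\bigr).$$
When the bracketed factor $1 + t - t[k]_t = 1 - t^2 - \cdots - t^k$ is nonpositive (which can happen only for $k \geq 3$ and $t$ near $1$), the right-hand side is $\leq 0$ and the inequality is automatic. Otherwise, bounding $[k'-k]_t < 1/(1-t)$ uniformly in $k'$ reduces the claim to $[k]_t(1 - t^k) > t^{k-1}(1 + t - t[k]_t)$, and this simplifies after cancellation to $[k-1]_t > t^k$, which holds trivially for $k \geq 2$ and $t \in (0,1)$.

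The delicate point is that the naive monotonicity-plus-limit bound---$g_{k,k'}$ is increasing in $k'$ with limit $(1+t)t^{k-1}$ as $k' \to \infty$---fails to give $g_{k,k'}(t) < 1$: at $k = 2$ and $t \to 1^-$ that limit tends to $2$. One must therefore exploit the full joint structure of $[k]_t$ and $[k']_t$ via the identity $[k']_t = [k]_t + t^k[k'-k]_t$, as above. Combining the three cases gives the proposition.
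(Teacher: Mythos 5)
Your argument is correct, but its key step differs from the paper's. The paper (Lemma~\ref{lem:qktSolutions2}) also solves the tuning equation \eqref{eqn:qkt} for $q$, writing $q=f_k(t)=(1+t)[k+1]_t/(t[k]_t)$; but instead of analyzing the difference $f_k-f_{k'}$, it notes that $f_k(t)$ is strictly decreasing in $k$ with $f_2(t)=1+t+1/t$ and $\lim_{k\to\infty}f_k(t)=1+1/t$, so for fixed $t$ all the values $f_k(t)$ with $k\geq 2$ lie in an interval of length $t<1$ and hence contain at most one integer; the case $k=1$ is then folded in via the identity $f_1=f_2+1$ (your $g_{1,2}\equiv 1$). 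Your telescoped difference $g_{k,k'}$ with the three-case inequality analysis accomplishes the same thing, somewhat more laboriously, with the small bonus of the explicit bound $g_{1,k'}\in(1,2)$ for $k'\geq 3$ in place of routing $k=1$ through the $k=2$ equivalence. One correction to your closing remark: the monotonicity-plus-limit bound does not fail. Since $[k'-k]_t/[k']_t\to 1$ as $k'\to\infty$, the limit of $g_{k,k'}(t)$ is $(1+t)t^{k-1}/[k]_t$, not $(1+t)t^{k-1}$; for $k\geq 2$ this limit is at most $t<1$ (it equals $t$ at $k=2$), so the ``naive'' bound already yields $g_{k,k'}(t)<1$ for all $k\geq 2$ --- and this is essentially the paper's interval-of-length-$t$ argument. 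Your longer algebraic verification of $g_{k,k'}<1$ is nevertheless valid, so the slip affects only the commentary, not the proof.
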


  First we establish the following properties of the tuning equation.
  \begin{lemma}\label{lem:qktSol}
    Let $q$ and $k$ be integers with $k>0$.
    \begin{enumerate}[(i)]
      \item \label{lem:qktSol:a} There exists $t\in (0,1)$ satisfying \eqref{eqn:qkt} if and only if $qk>2(k+1)$.
      \item \label{lem:qktSol:b} There is at most one $t\in[0,1)$ satisfying \eqref{eqn:qkt}.
    \end{enumerate}
  \end{lemma}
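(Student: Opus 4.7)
My plan is to reduce both parts to a monotonicity statement for a single function. Note first that $t=0$ is not a solution to \eqref{eqn:qkt} (the left side vanishes while the right side equals $1$), so we may restrict attention to $t\in(0,1)$. On this interval, \eqref{eqn:qkt} rearranges to $q=h(t)$, where
$$
  h(t):=\frac{(1+t)(1-t^{k+1})}{t(1-t^k)}.
$$
Thus it suffices to prove that $h:(0,1)\to\bbr{2(k+1)/k,\infty}$ is a continuous, strictly decreasing bijection. Given this, part~(ii) is immediate, and part~(i) follows because the image of $h$ consists precisely of those real numbers exceeding $2(k+1)/k$, i.e., those integers $q$ with $qk>2(k+1)$.

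The boundary values are routine: $h(t)\to\infty$ as $t\to0^+$ (numerator tends to $1$, denominator to $0^+$), while as $t\to1^-$ one has $(1-t^{k+1})/(1-t^k)=[k+1]_t/[k]_t\to(k+1)/k$ and $(1+t)/t\to2$, so $h(t)\to 2(k+1)/k$.

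The main work is proving strict monotonicity. I would compute the logarithmic derivative
$$
  \frac{h'(t)}{h(t)}=\frac{1}{1+t}-\frac{1}{t}+\frac{kt^{k-1}}{1-t^k}-\frac{(k+1)t^k}{1-t^{k+1}},
$$
place everything over the common denominator $t(1+t)(1-t^k)(1-t^{k+1})$ (which is positive on $(0,1)$), and simplify the numerator. After expanding $(t^k+t^{k+1})\bigl(k-(k+1)t+t^{k+1}\bigr)$ and collecting terms, all cross terms cancel and the numerator reduces to the compact form
$$
  N(t):=-1+(k+1)t^k(1-t^2)+t^{2k+2}.
$$
One checks $N(0)=-1$ and $N(1)=0$, and
$$
  N'(t)=(k+1)t^{k-1}\bigl(2t^{k+2}+k-(k+2)t^2\bigr)=:(k+1)t^{k-1}p(t).
$$
The polynomial $p$ satisfies $p(0)=k>0$, $p(1)=0$, and $p'(t)=2(k+2)t(t^k-1)<0$ on $(0,1)$, so $p>0$ on $[0,1)$. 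Hence $N'>0$ on $(0,1)$, so together with $N(1)=0$ this forces $N<0$ on $(0,1)$, which gives $h'<0$ there and completes the proof.

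The main obstacle is the algebraic simplification leading to the clean expression for $N(t)$; it involves several expansions with sign cancellations, and one must be careful to keep signs straight. Once $N$ is in the stated form, both $N(1)=0$ and the factorization of $N'(t)$ fall out cleanly, and the subsidiary polynomial $p(t)$ is handled by a one-step monotonicity argument.
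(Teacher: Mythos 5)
Your proof is correct, but it takes a genuinely different route from the paper's. You solve the tuning equation \eqref{eqn:qkt} for $q$, writing it (for $t\in(0,1)$, after noting $t=0$ is never a solution) as $q=h(t)$ with $h(t)=(1+t)(1-t^{k+1})/\bigl(t(1-t^k)\bigr)$, and show that $h$ is a continuous strictly decreasing bijection from $(0,1)$ onto the open interval $\bigl(2(k+1)/k,\infty\bigr)$; I checked the algebra, and the numerator of $h'/h$ over the common denominator is indeed $N(t)=-1+(k+1)t^k(1-t^2)+t^{2k+2}$, and your subsequent analysis of $N$ and of $p(t)=2t^{k+2}+k-(k+2)t^2$ is sound, so both parts of the lemma follow at once. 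The paper instead fixes $q$ and studies $f_{q,k}(t)=qt(1-t^k)-(t+1)(1-t^{k+1})$: for the only-if direction it compares with the boundary value $q'=2(k+1)/k$ and uses the simple second derivative $f_{q,k}''(t)=(k+1)t^{k-1}\bigl((k+2)t-(q-1)k\bigr)$ to get strict concavity of $f_{q',k}$, which together with the double zero of $f_{q',k}$ at $t=1$ forces $f_{q',k}<0$ on $(0,1)$; uniqueness in part (ii) again follows from concavity of $f_{q,k}$ when $qk>2(k+1)$ (at most two zeros in $[0,1]$, one of which is $t=1$); and existence is quoted from the proof of Theorem~\ref{main} (sign of $f_{q,k}$ at $0$, value and derivative at $1$). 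The trade-off: your monotone-bijection argument is self-contained and yields the slightly stronger conclusion that the solution exists, is unique, and depends monotonically on $q$, at the cost of the heavier expansion needed to reach $N(t)$; the paper's concavity argument avoids that expansion but splits into three separate pieces and leans on an existence computation carried out elsewhere.
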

  We remark that this lemma and others in this section hold more generally
  when $q$ and $k$ are real-valued, with the same proofs.
  \begin{proof}
    The `if' direction of part \eqref{lem:qktSol:a} was established in the
    proof of Theorem~\ref{main}. For the `only if' direction, let
    $$
      f_{q,k}(t):=qt(1-t^k)-(t+1)(1-t^{k+1}).
    $$
    We show that when $q\leq q':=2(k+1)/k$, the function $f_{q,k}(t)$ has no zeros in $(0,1)$. Indeed, in this case $f_{q,k}(t)\leq f_{q',k}(t)$ and since
    \begin{equation}\label{eqn:secondDeriv}
      f_{q,k}''(t)=(k+1)t^{k-1}\bigl((k+2)t-(q-1)k\bigr),
    \end{equation}
    we have that $f_{q',k}''(t)=-(k+1)(k+2)(1-t)t^{k-1}$ and therefore $f_{q',k}$ is strictly concave on $(0,1)$. Combined with $f_{q',k}(1)=f'_{q',k}(1)=0$, it follows that $f_{q',k}(t)<0$ for all $t\in (0,1)$.
    Thus $f_{q,k}(t)<0$ when $q\leq 2(k+1)/k$ and $t\in (0,1)$, implying the `only if' direction of \eqref{lem:qktSol:a}.

    It remains to establish part \eqref{lem:qktSol:b} in the case when $q$ and $k$ satisfy $qk>2(k+1)$. Then $(q-1)t>k+2\geq (k+2)t$ for all $t\geq 1$, so $f_{q,k}''(t)<0$ for $t\in [0,1]$ by \eqref{eqn:secondDeriv}. This implies that $f_{q,k}$ has at most two zeros in $[0,1]$, counting the zero at $1$, so therefore $f_{q,k}$ has at most one zero in $[0,1)$.
  \end{proof}

\begin{proof}[Proof of Proposition~\ref{prop:finitary}]
  The first part of the proposition follows from the Lemma~\ref{lem:qktSol}.

  For the second part, let $\Gamma_{\text{alg}}$ be the graph on $\m Z$ in which integers $i<j$ are adjacent iff $i$ and $j$ are the first two elements of $\bbb{i,j}$ assigned colors by the algorithm. It is straightforward to verify that the bubble endpoints of $\Gamma_{\text{alg}}$ are the integers assigned colors during Stage 1 of the algorithm, and that conditional on $\Gamma_{\text{alg}}$, the coloring $X$ is a uniform proper $q$-coloring of $\Gamma_{\text{alg}}$.

  Thus by Proposition~\ref{prop:colorInfiniteGraph}, it suffices to show that $\Gamma_{\text{alg}}$ is equal in law to $\Gamma[L]$ where $L=(L_i)_{i\in\m Z}$ is an iid sequence of $u$-bubble-weighted, $t$-geometric random variables. The bubble endpoints of the two graphs have the same law by Corollary~\ref{cor:isGood}\eqref{cor:isGood:a}, since
  $$
    \m P(L_i\geq 1)=\frac{t/(1-t)}{u+t/(1-t)}=\frac{t(q-2)}{q-1-t}=s,
  $$
  where we have substituted $u=\frac{q-1}{q-2}$ in the second equality.

  The result will follow from the claim that, for all integers $i<j$, the subgraphs of $\Gamma[L]$ and $\Gamma_{\text{alg}}$ induced by $\bbb{i,j}$ have the same conditional law given that $i$ and $j$ are bubble endpoints of $\Gamma[L]$ and $\Gamma_{\text{alg}}$, respectively.
  We establish this claim by showing that both conditional laws are equal to the law of the constraint graph of a Mallows-distributed permutation of $\bbb{i,j}$.
  For $\Gamma[L]$ this is straightforward to verify using Lemma~\ref{lem:localBub} together with the observation that the conditional law of a $u$-zero-weighted, $t$-geometric random variable conditioned to be positive does not depend on $u$.
  To prove the same for $\Gamma_{\text{alg}}$, we consider the permutation $\sigma$ of $\bbb{i,j}$ for which $\sigma(i)-i$ equals the number of elements of $\bbb{i,j}$ that have been assigned colors prior to $i$. We regard $\sigma(i)$ as the arrival time of $i$. On the event that $i$ and $j$ are bubble endpoints of $\Gamma_{\text{alg}}$, the subgraph of $\Gamma_{\text{alg}}$ induced by $\bbb{i,j}$ is seen to coincide with the constraint graph of $\sigma$.

  The integer with arrival time $k$ appears in position $\sLtilde(\sigma)_k$ from the right in the subsequence of $\bbb{i,j}$ consisting of integers that have previously arrived.
  Thus, the random choices made by the algorithm ensure that:
  \begin{itemize}
    \item $\sLtilde(\sigma)_k$ is $(k-i)$-truncated $t$-geometric, and
    \item $\bigl(\sLtilde(\sigma)_k\bigr)_{k\in\bbb{i,j}}$ is a sequence of independent random variables.
  \end{itemize}
  Thus by Lemma~\ref{lem:LehmerBub}, the permutation $\sigma$ has law $\Mal_t$.
  It follows that for all integers $i<j$ the conditional laws of the subgraphs of $\Gamma[L]$ and $\Gamma_{\text{alg}}$ induced by $\bbb{i,j}$, given that $i$ and $j$ are bubble endpoints of the respective graphs, coincide.

  From this we deduce that the conditional laws of $\Gamma[L]$ and $\Gamma_{\text{alg}}$ given their respective sets of bubble endpoints are the same. Since we have previously established that these sets have the same distribution, it follows that $\Gamma[L]$ and $\Gamma_{\text{alg}}$ are equal in law.
\end{proof}

In the introduction, we claimed that if one conditions on the absence of color $q$ in the $1$-dependent $q$-colorings from the main theorem, the resulting $(q-1)-$coloring is $2$-dependent, and that these are the only pairs of colorings in the theorem related to one another by conditioning in this manner. This follows from the properties of the tuning equation \eqref{eqn:qkt} established in the following lemma.

\begin{lemma}\label{lem:qktSolutions2}
  Consider positive integers $q$ and $k$ and a real number $t\in(0,1)$.
  \begin{enumerate}
    \item \label{lem:qktSolution2:a} The triple $(q,1,t)$ satisfies \eqref{eqn:qkt} if and only if $(q-1,2,t)$ does.
    \item \label{lem:qktSolution2:b} Given $(q,t)$, there is at most one $k$ satisfying \eqref{eqn:qkt}.
    \item \label{lem:qktSolution2:c} Suppose that $qk>2(k+1)$ and let $t(q,k)$ denote the unique solution of \eqref{eqn:qkt} satisfying $0<t(q,k)<1$. Then $t(q,k)>\frac{1}{q-1}$ and $\lim_{k\to\infty}t(q,k)=\frac{1}{q-1}$.
    \item \label{lem:qktSolution2:d} Given $t$ with $0 < t<1$, there is at most one pair of integers $(q,k)$ with $q>1$ and $k\geq 2$ satisfying \eqref{eqn:qkt}.
  \end{enumerate}
\end{lemma}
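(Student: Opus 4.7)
The plan is to rewrite the tuning equation in a principal form and then dispatch each part by elementary manipulations and a short interval argument. By expanding $qt(1-t^k) - (1+t)(1-t^{k+1})$ and factoring out $1-t$, one obtains the equivalent reformulations of \eqref{eqn:qkt}
\[
  t^{k+1}(q-1-t) \;=\; (q-1)t - 1
  \qquad\text{and}\qquad
  q \;=\; \frac{(1+t)\,[k+1]_t}{t\,[k]_t},
\]
which I will use throughout. Part (i) is then immediate: substituting $k=1$ into either form, and then $(q-1,\,k{=}2)$, shows that both collapse to $qt = (1+t)^2$. Part (ii) follows from the first form: the right side of $t^{k+1} = \frac{(q-1)t-1}{q-1-t}$ depends only on $(q,t)$, and $k\mapsto t^{k+1}$ is strictly monotone for fixed $t\in(0,1)$, so $k$ is determined.

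For part (iii), the lower bound $t(q,k) > \tfrac{1}{q-1}$ is immediate from the positivity of $t^{k+1}(q-1-t)$, which forces $(q-1)t-1>0$. For the limit, I would show $k\mapsto t(q,k)$ is strictly decreasing: setting $h_{k+1}(t) := t^{k+2}(q-1-t) - ((q-1)t-1)$ and using the principal form at $t(q,k)$, one computes $h_{k+1}(t(q,k)) = (t(q,k)-1)\,((q-1)t(q,k)-1) < 0$, while $h_{k+1}(\tfrac{1}{q-1}) > 0$. By Lemma~\ref{lem:qktSol}(ii) the solution in $(0,1)$ is unique, so $t(q,k+1)\in(\tfrac{1}{q-1},\,t(q,k))$. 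Combined with the uniform bound $t(q,k)\leq t(q,k_0)<1$, this gives $t(q,k)^{k+1}\to 0$, and the principal equation then forces $(q-1)t(q,k)\to 1$.

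For part (iv)---the main step---the idea is a length argument. Using $[k+1]_t = [k]_t + t^k$ in the second form of the principal equation gives
\[
  q(t,k) \;=\; 1 + \tfrac{1}{t} + \frac{(1+t)\,t^{k-1}}{[k]_t}.
\]
The correction term is strictly positive and strictly decreasing in $k$ (the difference between consecutive values simplifies via the identity $[k+1]_t - t[k]_t = 1$). Hence, for $k\geq 2$ the values $q(t,k)$ lie in the half-open interval $\bigl(1+\tfrac{1}{t},\; q(t,2)\bigr] = \bigl(1+\tfrac{1}{t},\; 1+\tfrac{1}{t}+t\bigr]$, which has length $t<1$ and therefore contains at most one integer. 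The constraint $k\geq 2$ is essential here: extending to $k\geq 1$ would enlarge the range to an interval of length $1+t>1$, which is consistent with the pairing phenomenon in part (i).

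The main obstacle I anticipate is the monotonicity/limit argument in part (iii), which requires identifying the correct test function $h_{k+1}$ and invoking the uniqueness provided by Lemma~\ref{lem:qktSol}. The length-$t$ interval argument for (iv) is the key creative observation but is routine to carry out once the principal form of the tuning equation is in hand.
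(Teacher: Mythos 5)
Your proposal is correct and follows essentially the same route as the paper: part (ii) via the rearrangement $t^{k+1}=\frac{(q-1)t-1}{q-1-t}$, and part (iv) by solving for $q$ and noting the right side is strictly decreasing in $k$ with values trapped in an interval of length $t<1$ for $k\geq 2$. Your treatment of part (iii) (monotonicity of $k\mapsto t(q,k)$ via the sign of the next tuning polynomial at $t(q,k)$, to keep $t(q,k)$ bounded away from $1$) is more detailed than the paper's one-line deduction, and is a valid way to fill in that step.
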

\begin{proof}
  Part \ref{lem:qktSolution2:a} is an easy calculation. Part \ref{lem:qktSolution2:b} follows since \eqref{eqn:qkt} is equivalent to $$t^{k+1}=\frac{(q-1)t-1}{q-1-t},$$ and this also implies part \ref{lem:qktSolution2:c}.

  Finally we establish part \ref{lem:qktSolution2:d}. Solving for $q$ in
  \eqref{eqn:qkt} yields
  \begin{equation}\label{eqn:solveForQ}
    q=\left(1+\frac{1}{t}\right)\left(t+\frac{1-t}{1-t^k}\right).
  \end{equation}
  The right side of \eqref{eqn:solveForQ} is strictly decreasing in $k$. \
  Furthermore as $k\to\infty$ it tends to $\frac{1}{t}+1$, whereas when $k=2$
  it evaluates to $\frac{1}{t}+1+t$. Therefore if $(q,k,t)$ is any solution of
  \eqref{eqn:solveForQ} with $q>1$ and $k\geq 2$, we must have that
  \begin{equation}\label{eqn:boundForQ}
    \frac{1}{t}+1<q\leq \frac{1}{t}+1+t,\qquad k>2.
  \end{equation}
  Since $t<1$ there can be at most one integer $q$ satisfying
  \eqref{eqn:boundForQ}, in which case by part \ref{lem:qktSolution2:b} there is
  at most one $k$ satisfying \eqref{eqn:qkt}.
\end{proof}

%%%%%%%%%%%%%%%%%%%%%%%%%%%
%%%%%%%%%%%%%%%%%%%%%%%%%%%
%%                       %%
%%       Proof of        %%
%%       bit fin factor  %%
%%                       %%
%%%%%%%%%%%%%%%%%%%%%%%%%%%
%%%%%%%%%%%%%%%%%%%%%%%%%%%

\section{Bit-finitary factors}\label{sec:bitproof}

In this section we prove Theorem \ref{thm:bit}.

\begin{proof}[Proof of Theorem \ref{thm:bit}]
Let $F$ and $U$ be as in the statement of the theorem.
For each $k \geq 1,$ let
\[p(k)=\P\bigl(r_0(U) \leq k\bigr)\] be the probability that $F(U)_0$ is determined by the restriction of $U$ to $[-k,k]\times [0,d]$ for some $d\geq 0$. 
For each $k\geq 1$,   let $d(k)$ be the minimal value of $d$ such that the probability that
$F(U)_0$ is determined by the restriction of $U$ to $[-k,k] \times [0,d]$ is at least $p(k)-2^{-k}$. In particular, $d(k)=0$ if $p(k) \leq 2^{-k}$.
Let $R_i(U)$ be minimal such that $F(U)_i$ is determined by the restriction of $U$ to
\[S_i(U):=\bigl[i-R_i(U),i+R_i(U)\bigr] \times \left[0,d\bigl(R_i(U)\bigr)\right].\]
The definition of $r_i(U)$ and $R_i(U)$ ensure that
\[\P\bigl(R_0(U) \geq k\bigr) \leq \P\bigl(r_0(U) \geq k\bigr)+2^{-k}\]
for every $k\geq 0$ and hence that
\[\E[R_0(U)] = \sum_{k\geq 1} \P(R_0(U) \geq k) \leq \E[r_0(U)]+1 <\infty.\]

 For each $i\in \Z$, we define
\[T^+_i(U) = \sup\left\{j \geq i :\, R_k(U) \geq j-k \text{ for some $k\leq i$}\right\}\]
and
\[T^-_i(U) = \inf\left\{j \leq i :\, R_k(U) \geq k-j \text{ for some $k\geq i$}\right\}.\]
Similarly, for each $i,j \in \Z$ with $j \leq T^+_i$, we define
\[D_{i,j}(U) = \inf \left\{D \geq 0: S_k \cap (\N\times\{j\}) \subseteq [0,D] \times \{j\} \text{ for all $k\leq i$}\right\}
\]
Since $\E[R_0(U)]$ is finite, it follows from Borel-Cantelli that for each $i \in \Z$ there exist at most finitely many $k \in \Z$ for which $R_k(U) \geq |k-i|$ a.s., and this easily implies that $T^+_i(U)$, $T^-_i(U)$, and $D_{i,j}(U)$ are finite for every $i\in \Z$ and every $j\leq T^+_i$ a.s.

Let $\mathcal{K}$ be the set of pairs $(K,y)$, where $K$ is a finite subset $\Z \times \N$ and $y$ is a function from $K$ to $\{0,1\}$. Clearly $\mathcal{K}$ is countable.
For each $i \in \Z$, we define a finite set $K_i(U) \subset \Z\times \N$ by
\[K_i(U) =\left\{(j,k) \in \Z \times \N : j \in \big[T_0^-(U), T_0^+(U)\big],\; k \in \big[0, D_{i,j}(U)\big]\right\},\]
and let
\[ \bar K_i(U) =\left\{(j,k) \in \Z \times \N : j \in \big[T_0^-(U)-i, T_0^+(U)-i\big],\; k \in \big[0, D_{i,j}(U)\big]\right\}.\]
We define a factor $G: \{0,1\}^{\Z\times \N}\to \mathcal{K}^\Z$ by setting
\[G_i(U)=\left( \bar K_i(U), U|_{K_i(U)}\right).\]
Fix an element $a_0\in A$ arbitrarily, and define $h:\mathcal{K}\to A$ by letting $h(K,y)=a$ if $a\in A$ is such that $F(x)_0=a$ for a.e.\ $x$ such that $x|_K=y$, and letting $h(K,y)=a_0$ if no such $a\in A$ exists.
The construction of $G$ ensures that the value of $F(U)_0$ is determined by the restriction of $U$ to $K_0(U)$, and it follows that $H \circ G = F$, where $H:\mathcal{K}^\Z\to A^\Z$ is the factor
\[H_i\bigl((K_j,y_j)_{j\in \Z}\bigr) = h(K_i,y_i).\]
It remains to prove only that the process $\bigl( G_i(U)\bigr)_{i\in\Z}$ is a Markov chain. To see this,
observe that the subsets of $\Z\times\N$ that are queried in order to compute $\bigl( G_i(U) \bigr)_{i\geq1}$ and $\bigl( G_i(U) \bigr)_{i \leq -1}$ have intersection contained in the subset of $\Z \times \N$ that is queried to compute $G_0(U)$. It follows that
 $\bigl( G_i(U) \bigr)_{i \geq 1}$ and $\bigl( G_i(U) \bigr)_{i \leq -1}$ are conditionally independent given $G_0(U)$, so that  $\bigl( G_i(U)\bigr)_{i\in\Z}$ is indeed a Markov chain. \qedhere

\end{proof}

\section{Compact Markov chains}\label{sec:compact}

By a {\bf process} on a space $S$, we mean a random element of $S^{\m Z}$ that is measurable with respect to the product Borel $\sigma$-algebra.
As mentioned in the introduction, it is an open question whether there exists a stationary Markov process $(X_i)_{i\in\m Z}$ on a compact metric space $(S,d)$, an integer $k>0$, and a real number $\ve>0$ such that
\begin{enumerate}
  \item $X_0$ and $X_k$ are independent, and
  \item $d(X_0,X_1)\geq \ve$ almost surely.
\end{enumerate}
It is trivial to construct chains that satisfy either one of the two conditions. Here are two somewhat interesting examples. Firstly, let $S=[0,1]^2$. Conditional on $X_0=(u,v)$, let $X_1=(v,U)$, where $U$ is uniformly distributed on $[0,1]$. This satisfies (i) with $k=2$, but not (ii). Secondly, let $S$ be the unit sphere $\{x\in \m R^3\colon \|x\|_2=1\}$. Conditional on $X_0$, let $X_1$ be uniformly distributed on the circle $\{y\in S\colon \langle y,X_0\rangle =0\}$ (where $\langle\cdot,\cdot\rangle$ is the standard inner product on $\m R^3$). This satisfies (ii) (with the Euclidean metric and $\ve=1$) but not (i).

\begin{proof}[Proof of Proposition \ref{prop:noReversibleCompactChain}]
  Let $\pi$ denote the law of $X_0$. Since $S$ is Polish, there is a Markov kernel that is a regular conditional probability for $\m P(X_1\in A\mid X_0)$. This gives rise to a Markov transition operator $P$ on $L^2(S,\pi)$. Since $X$ is reversible, $P$ is self-adjoint.

  Since $X_0$ is independent of $X_k$, it follows that $P^{i}=P^k$ for all $i\geq k$. We claim that if $k\geq 2$, then $P^k=P^{k-1}$. Indeed, for all $f\in L^2(S,\pi)$,
  \begin{align}\label{eq:markovOperatorProj}
    \|P^{k-1}f-P^kf\|^2&=\|P^{k-1}f\|^2-2\langle P^{k-1}f,P^kf\rangle +\|P^{k}f\|^2\nonumber\\
    &=
    \langle f,P^{2k-2}f\rangle -2\langle f,P^{2k-1}f\rangle+\langle f,P^{2k}f\rangle.
  \end{align}
  Since $k\geq 2$, we have that $2k-2\geq k$ and therefore $P^{2k-2}=P^{2k-1}=P^{2k}$. Thus \eqref{eq:markovOperatorProj} vanishes for all $f$, so $P^{k-1}=P^k$. Hence by induction it follows that $P=P^2$.

  Consider a cover of $S$ by balls of radius $\ve/4$. Fix a partition of unity $\{f_i\}\subset L^2(S,\pi)$ subordinate to this cover. Then $Pf_i$ and $f_i$ have disjoint support for all $i$. Thus $\langle Pf_i,f_i\rangle=0$, implying that
  $$
    \|Pf_i\|^2=\langle Pf_i,Pf_i\rangle=\langle Pf_i,f_i\rangle=0.
  $$
  Consequently $\sum_i Pf_i=0$, so $P$ maps the constant $1$ function to the zero function. But this contradicts the fact that $P$ is a Markov transition operator.
\end{proof}

As explained in the introduction, any process satisfying the above conditions yields a finitely dependent coloring. Schramm established that no hidden-Markov finitely dependent $q$-coloring exists, for any $q$. (This was first published in \cite[Proposition 3]{HL}.) Combining this with the observation in the previous paragraph implies that there is no \emph{finite} Markov chain satisfying the conditions of the above question. %Question \ref{question:1}.
 On the other hand, any stationary stochastic process can trivially be expressed as a function of an \emph{uncountable}-state Markov chain.

\begin{proof}[Proof of Proposition \ref{prop:existsNonCompact}]
  We first describe the process $(X_n)_{n\in\Z}$, and then explain why it is Markov.
  Let $(\omega_{i,j}\colon i\in\m Z,\ j\in \m N)$ be iid Bernoulli$(\tfrac12)$ random bits indexed by the discrete half plane.
  The process $(X_n)_{n\in\m Z}$ will be a deterministic function of these bits.
  Let
  $$
    h(n):=\min\bigl\{j\in\N:\omega_{n-1,j}\neq\omega_{n,j}\bigr\},\qquad n\in\m Z,
  $$
  be the height of the first discrepancy between the bits in columns $n-1$ and $n$.
  Note that $h(n)$ is a.s.\ finite.
  Let $X_n$ be the $2$-by-$h(n)$ matrix
  $$
    \Bigl(\omega_{i,j}: (i,j)\in \{n-1,n\}\times\bigl\{1,\ldots, h(n)\bigr\} \Bigr),
  $$
  consisting of bits in the two columns up to the discrepancy.
  The state space $S$ is the set of binary matrices of width $2$ with the two columns differing exactly in the last row; note that this set is countable.
  We equip $S$ with the discrete metric $d(x,y):=\mathbbm{1}[x\neq y]$.

  We now show that the process $(X_n)_{n\in\m Z}$ is Markov.
  Consider the $\sigma$-algebras
  $$
    \mathcal F_n=\sigma\bigl((\omega_{n-1,j},\omega_{n,j})\colon j\in\mathbb N\bigr),\qquad n\in\m Z.
  $$
  By considering cylinder events, it is easily verified that the sigma algebras \[\sigma(\mathcal F_{n+1},\mathcal F_{n+2}\ldots) \qquad \text{ and } \qquad \sigma(\mathcal F_{n-1},\mathcal F_{n-2}\ldots)\] are conditionally independent given $\mathcal F_n$.
  Since $X_m$ is measurable with respect to $\mathcal F_m$ for all $m\in\m Z$, the Markov property follows.

  Clearly the sequence $(X_n)_{n\in\m Z}$ is stationary.
  Furthermore $X_0$ is independent of $X_2$, since they are functions of disjoint sets of independent bits.
  All that remains is to verify that $X_n\not=X_{n+1}$.
  Suppose to the contrary that $X_n=X_{n+1}$.
  Then $h(n)=h(n+1)$; call the common value $j$.
  Since $X_n=X_{n+1}$ we have that $(\omega_{n-1,j},\omega_{n,j})=(\omega_{n,j},\omega_{n+1,j})$.
  But since $h(n)=j$ we have that $\omega_{n-1,j}\neq\omega_{n,j}$.
  Thus we have derived a contradiction.
\end{proof}

\section{Higher dimensions and shifts of finite type}\label{sec:highDim}
\begin{proof}[Proofs of Corollaries \ref{cor:highDim} and \ref{cor:shifts}]
  A simple modification of the proofs of \cite[Corollaries 5 and 6]{HL} establishes Corollaries \ref{cor:highDim} and \ref{cor:shifts}, respectively. Namely, replace the 1-dependent 4-coloring used in the proof of Corollary 20 of \cite{HL} with the ffiid 1-dependent 5-coloring with exponential tail on the coding radius from Theorem~\ref{main}. This results in an ffiid process with exponential tails, since the maximum of a finite (deterministic) number of independent random variables with exponential tails still has exponential tails.
\end{proof}

\section{Open problems}\label{sec:open}

  \begin{enumerate}
    \item For the pairs $(k,q)=(1,4)$ and $(2,3)$, can the $k$-dependent $q$-coloring be expressed as a finitary factor of iid with finite expected coding radius? (We suspect not.)
    \item For which pairs $(k,q)$ do there exist other color-symmetric $k$-dependent $q$-colorings of $\m Z$ besides the Mallows colorings?  We believe that there are no others for the pairs $(1,4)$ and $(2,3)$.
    \item Are there finitary factors of iid that are not expressible as bit-finitary factors of iid?
    \item
    Does there exist a stationary Markov process $(X_i)_{i\in\m Z}$ on a compact metric space $(S,d)$, an integer $k>0$, and a real number $\ve>0$ such that $X_0$ and $X_k$ are independent and $d(X_0,X_1)\geq \ve$ almost surely?
    \item For every word $x\in \m Z^n$ and every $0\leq i\leq \binom{n}{2}$, is there a `natural' bijection between proper buildings of $x$ having $i$ and $\binom{n}{2}-i$ inversions?
  \end{enumerate}

\section*{Acknowledgements}\label{sec:ack}
  AL and TH were supported by internships at Microsoft Research while portions of this work were completed. TH was also supported by a Microsoft Research PhD fellowship.

  \bibliographystyle{habbrv}
  \bibliography{coloring}

\end{document}